\documentclass[a4paper,11pt]{article}

 
\usepackage[T1]{fontenc}
\usepackage[utf8]{inputenc}
\usepackage{amsmath, amssymb, mathtools, amsthm}
\mathtoolsset{showonlyrefs}
\usepackage[margin=0.9in]{geometry}
\usepackage{bbm}
\usepackage[numbers,sort&compress]{natbib}
\usepackage[colorlinks=true]{hyperref}
\hypersetup{urlcolor=blue, citecolor=blue, linkcolor=blue}


\DeclarePairedDelimiter{\abs}{\lvert}{\rvert}
\DeclarePairedDelimiter{\norm}{\lVert}{\rVert}
\DeclarePairedDelimiter{\set}{\{}{\}}
\DeclarePairedDelimiter{\skp}{\langle}{\rangle}

\DeclarePairedDelimiter{\prt}{(}{)}
\DeclarePairedDelimiter{\brk}{[}{]}


\DeclareMathOperator*{\esssup}{ess\,sup}
\DeclareMathOperator*{\supp}{\operatorname{supp}}
\DeclareMathOperator*{\tr}{\operatorname{tr}}
\DeclareMathOperator*{\Id}{\operatorname{Id}}

\DeclareMathAlphabet{\mathup}{OT1}{\familydefault}{m}{n}
\newcommand{\dx}[1]{\mathop{}\!\mathup{d} #1}
\newcommand{\dd}{\,\mathup{d}}

\newcommand{\inb}{\stackrel{b}{\in}}
\newcommand{\ddt}{\frac{\dx{}}{\dx{t}}}
\renewcommand{\div}{\operatorname{div}}
\newcommand{\R}{\mathbb{R}}
\newcommand{\hpsi}{\hat\psi}
\newcommand{\intO}{\int_{\Omega}}
\newcommand{\intOT}{\int_{\Omega_{t}}}
\newcommand{\intOTf}{\int_{\Omega_{T}}}
\newcommand{\intOD}{\int_{\Omega\times D}}
\newcommand{\OD}{\Omega\times D}
\newcommand{\F}{\mathcal{F}}

\newcommand\blfootnote[1]{%
  \begingroup
  \renewcommand\thefootnote{}\footnote{#1}%
  \addtocounter{footnote}{-1}%
  \endgroup
}

\theoremstyle{plain}
\newtheorem{theorem}{Theorem}[section]

\newtheorem{corollary}[theorem]{Corollary}
\theoremstyle{remark}
\newtheorem{remark}[theorem]{\bf Remark}
\newtheorem{definition}[theorem]{\bf Definition}

\begin{document}

\title{On a class of generalised solutions to the kinetic Hookean dumbbell model for incompressible dilute polymeric fluids: existence and macroscopic closure
}

\author{Tomasz D\k{e}biec\thanks{Institute of Applied Mathematics and Mechanics, University of Warsaw, Banacha 2, 02-097 Warsaw, Poland.}
\and{Endre S\"{u}li\thanks{Mathematical Institute, University of Oxford, Woodstock Road, Oxford OX2 6GG, United Kingdom.}}}

\maketitle
\begin{abstract}
		We consider the Hookean dumbbell model, a system of nonlinear PDEs arising in the kinetic theory of homogeneous dilute polymeric fluids. It consists of the unsteady incompressible Navier--Stokes equations in a bounded Lipschitz domain, coupled to a Fokker--Planck-type parabolic equation with a centre-of-mass diffusion term, for the probability density function, modelling the evolution of the configuration of noninteracting polymer molecules in the solvent. 
		The micro-macro interaction is reflected by the presence of a drag term in the Fokker--Planck equation and the divergence of a polymeric extra-stress tensor in the Navier--Stokes balance of momentum equation.
		We introduce the concept of \emph{generalised dissipative solution} -- a relaxation of the usual notion of weak solution, allowing for the presence of a, possibly nonzero, defect measure in the momentum equation. This defect measure accounts for the lack of compactness in the polymeric extra-stress tensor. We prove global existence of generalised dissipative solutions satisfying additionally an energy inequality for the macroscopic deformation tensor. Using this inequality, we establish a conditional regularity result:\ any generalised dissipative solution with a sufficiently regular velocity field is a weak solution to the Hookean dumbbell model. Additionally, in two space dimensions we provide a rigorous derivation of the macroscopic closure of the Hookean model and discuss its relationship with the Oldroyd-B model with stress
diffusion. Finally, we improve a result by Barrett and S\"uli~[Barrett, J. W. and S\"{u}li, E., Existence of global weak solutions to the kinetic Hookean dumbbell model for incompressible dilute polymeric fluids. Nonlinear Anal. Real World Appl. 39 (2018), 362–395] by establishing the global existence of weak solutions for a larger class of initial data.
\end{abstract}

\vskip .4cm
\begin{flushleft}
    \noindent{\makebox[1in]\hrulefill}
\end{flushleft}
	2010 \textit{Mathematics Subject Classification.} 35Q30, 76A05, 76D03, 82C31, 82D60 
	\newline\textit{Keywords and phrases.} Kinetic polymer models, Hookean dumbbell model, Navier--Stokes--Fokker--Planck system, Oldroyd-B model, existence of weak solutions, dissipative solutions \\[-2.em]
\begin{flushright}
    \noindent{\makebox[1in]\hrulefill}
\end{flushright}
\vskip .4cm

\blfootnote{{Email addresses:} {t.debiec@mimuw.edu.pl}, {suli@maths.ox.ac.uk}.}


\section{Description of the model}

In this paper, we study generalised solutions to a Navier--Stokes--Fokker--Planck (NSFP) model describing the flow of a dilute polymeric fluid. We establish the existence of solutions admitting a defect measure in the Navier--Stokes equation but obeying an energy-type inequality in such a way that the energy defect controls the defect in the flow equation. Thus, we extend and improve an earlier result of Barrett and S\"uli~\cite{BaSu18} in three space dimensions, where no energy inequality was proved. In the case of two spatial dimensions with additional regularity of the initial data, we show that our generalised solution is in fact a weak solution of the NSFP system. Furthermore, we show that the ``defected'' extra-stress tensor satisfies the Oldroyd-B equations -- thus providing a new existence proof for the macroscopic system. 

Polymer molecules are long chains of monomers and their presence in a fluid induces a number of complex phenomena in the resulting mixture, making their mathematical analysis challenging. The simplest kinetic model for a polymeric fluid is the \emph{dumbbell model} wherein each polymer molecule is idealised to consist of two small massless beads connected by an elastic spring. Furthermore, the solution is assumed to be dilute, i.e., it is supposed that 
distinct polymer molecules do not interact with each other, and individual polymer molecules do not exhibit self-interaction. A crucial modelling choice is the form of the elastic spring force:
\begin{equation}
\label{eq:SpringPotential}
	F:D\to\R^d,\qquad F\prt*{\frac{|q|^2}{2}} = H U'\prt*{\frac{|q|^2}{2}}\, q,
\end{equation}
where $U$ is the spring potential, $H$ is the spring constant, and $D$ is the space of admissible conformation vectors. We shall choose $D=\R^d$ and assume that the springs obey Hooke's law, i.e., $U(s)=s$, with the spring constant $H$ scaled to $1$. While it is obviously nonphysical that the polymer molecules are allowed to extend indefinitely, this simple description has practical appeal. In particular, the resulting micro-macro model has a (formal) macroscopic closure, the Oldroyd-B model~\cite{Oldroyd}.
We shall therefore focus our attention on the following Hookean dumbbell model: 
\begin{alignat}{2}
\begin{aligned}
\label{eq:NSFP}
	\partial_t u + \prt*{u\cdot \nabla_x}u - \nu \Delta_x u + \nabla_x p &= \div_x \tau &&\quad \mbox{on $(0,T) \times \Omega$},\\
	\div_x u &=0&&\quad \mbox{on $(0,T) \times \Omega$},\\
	\partial_t \psi + u\cdot\nabla_x\psi + \div_q\prt*{(\nabla_x u) q\psi} - \mu\Delta_x\psi &= \div_q\prt*{M\nabla_q\prt*{\frac{\psi}{M}}}&&\quad \mbox{on $(0,T) \times \Omega \times D$},
\end{aligned}
\end{alignat}
where $T>0$, 
$\Omega\subset\R^d$ is a bounded open domain, and $D=\R^d$ is the conformation domain.
The \emph{polymeric extra stress tensor} $\tau$ appearing on the right-hand side of the conservation of linear momentum equation \eqref{eq:NSFP}$_1$ arises because of the presence of the polymer molecules in the Newtonian fluid, with 
constant viscosity $\nu>0$, acting as a solvent, and $\mu>0$ is the so-called centre-of-mass diffusion coefficient. The extra stress tensor
$\tau$ is related
to the probability density function $\psi$ by the \emph{Kramers expression} (again, with all physical constants scaled to 1):
\begin{equation}
	\tau(t,x) := \int_D \psi(t,x,q)\, (q\otimes q) \dx{q} - \prt*{\int_D \psi(t,x,q) \dx{q}}\Id.
\end{equation}
Here $q \otimes q \in \mathbb{R}^{d \times d}_{\textrm{sym}}$, with entries $(q \otimes q)_{i,j} = q_i q_j$ for $i,j=1,\ldots,d$, and $\Id \in \mathbb{R}^{d \times d}$ is the identity matrix. 
Thus, $\tau$ is a symmetric tensor.
The factor $M$, present in the last term of the Fokker--Planck equation  \eqref{eq:NSFP}$_3$, is the normalised Maxwellian, defined by
\begin{equation}
	M(q) := \frac{1}{\mathcal{Z}} \mathrm{e}^{-\frac{|q|^2}{2}},\quad \mathcal{Z}:= \int_D \mathrm{e}^{-\frac{|q|^2}{2}}\dx{q}.
\end{equation}
We note in passing that the triple $(u,p,\psi)=(0,c,M)$ is an equilibrium solution of the system \eqref{eq:NSFP} for any constant $c$.
We observe immediately some crucial properties of the Maxwellian, which we will frequently use without reference:
\begin{equation}
	\nabla_q M = -M q,\quad \int_D M(q)|q|^r \dx{q} < \infty,\quad \sup_{q\in D}M^\alpha|q|^r <\infty,  \;\;\text{for any $\alpha>0$, $r\geq 0$}.
\end{equation}
The Maxwellian is a convenient weight for the solution of the Fokker--Planck equation, and so we will actually consider the normalised probability density function
\begin{equation}
	\hpsi := \frac{\psi}{M},\quad \hpsi_0 := \frac{\psi_0}{M}, 
\end{equation}
where $\psi_0$ is the initial value of $\psi$, i.e., $\psi(0,x,q)=\psi_0(x,q)$ for all $(x,q) \in \Omega \times D$.
Consequently, it is useful to rewrite the Fokker--Planck equation as an equation for $\hpsi$:
\begin{equation}
\label{eq:FPhat}
	M\partial_t \hpsi + Mu\cdot \nabla_x \hpsi + \div_q\prt*{(\nabla_x u) qM\hpsi} - \mu M\Delta_x\hpsi = \div_q\prt*{M\nabla_q\hpsi}.
\end{equation}
We supplement system~\eqref{eq:NSFP} with the following initial and boundary/decay conditions:
\begin{alignat}{2}
	u &= 0 &&\text{on $(0,T)\times\partial\Omega$},\\
	u(0,x) &= u_0(x)\quad &&\forall\, x\in\Omega,
\end{alignat}
and 
\begin{alignat}{2}
\left|M\brk*{\nabla_q\prt*{\frac{\psi}{M}} - (\nabla_x u) q\frac{\psi}{M}} \right| &\rightarrow 0\;\;\text{as $|q|\to\infty$} \quad &&\text{on $(0,T)\times \Omega$},\\
\nabla_x\psi \cdot \hat{n} &= 0 && \text{on $(0,T)\times\partial\Omega\times D$},\\
\psi(0,x,q) &= \psi_0(x,q) &&\forall\, (x,q)\in \Omega\times D,
\end{alignat}
where $\hat n$ denotes the unit outward normal vector to $\partial\Omega$. If $\psi_0$ is such that 
\begin{equation}
	\psi_0 \geq 0,\quad \int_D\psi_0(x,q)\dx{q} = 1\;\; \text{for a.e. $x\in\Omega$},
\end{equation}
the above boundary/decay conditions guarantee that these properties are propagated in time.

Let us also recall the following Oldroyd-B model with stress diffusion:
\begin{equation}
\begin{split}
\label{eq:OB}
	\partial_t u + \prt*{u\cdot \nabla_x}u - \nu \Delta_x u + \nabla_x p &= \div_x \tau,\\
	\div_x u &=0,\\
	\partial_t \tau + \prt*{u\cdot\nabla_x}\tau - \brk*{(\nabla_x u) \tau + \tau (\nabla_x^{\textrm{T}} u)} - \mu\Delta_x\tau + 2\tau &= 2 D(u),
\end{split}
\end{equation}
posed on $(0,T) \times \Omega$,
which, at least formally, is the macroscopic closure of the Navier--Stokes--Fokker--Planck system \eqref{eq:NSFP}, upon multiplying the Fokker--Planck equation \eqref{eq:NSFP}$_3$ by $q\otimes q$ and integrating over $q \in D$. Note, however, that this assertion has not been rigorously proved for weak solutions, except in two space dimensions (cf.~\cite{BaSu18}). Here we denote by $D(u):=\frac12((\nabla_x u) + (\nabla_x^{\mathrm T} u))$ the symmetric velocity gradient.
For further reference, we introduce the conformation tensor $\sigma:=\tau + \Id$, which satisfies the following evolution equation
\begin{equation}
\label{eq:OB-sigma}
		\partial_t \sigma + \prt*{u\cdot\nabla_x}\sigma - \brk*{(\nabla_x u) \sigma + \sigma (\nabla_x^{\mathrm T} u)} - \mu\Delta_x\sigma + 2\prt*{\sigma-\Id} = 0.
\end{equation}
To make a clear distinction between solutions 
to~\eqref{eq:OB}$_3$ and~\eqref{eq:OB-sigma} 
on the one hand, and the tensors $\sigma$ and $\tau$ defined through a solution $\hpsi$ to the system~\eqref{eq:NSFP} via the (formal) macroscopic closure described above, we denote the latter by, respectively,
\begin{equation}
	\sigma(\hpsi):=\int_D M\hpsi\, (q\otimes q) \dx{q},\quad \tau(\hpsi):=\sigma(\hpsi) - \prt*{\int_D M\hpsi \dx{q}} \Id.
\end{equation}

\medskip

The main difficulty in proving existence results for the Hookean dumbbell model~\eqref{eq:NSFP} is the lack of sufficiently strong \emph{a priori} bounds on the extra stress tensor $\tau(\hpsi)$. 
The global existence of weak solutions is known only in two spatial dimensions:\ it was proved by Barrett and S\"{u}li~\cite{BaSu18} (for smooth enough initial data) through a rigorous derivation of the macroscopic closure of the NSFP model, i.e., by rigorously establishing the connection of the NSFP model to the macroscopic Oldroyd-B model; see~\cite{BaSu18}. This result was recently extended to a larger class of data by La~\cite{La2020}, who introduced the concept of moment solutions to the Fokker--Planck equation. 

Choosing a suitable nonlinear spring potential in~\eqref{eq:SpringPotential} instead of the linear Hookean spring potential leads to a more satisfactory existence theory. A large body of literature is by now available in the case of finitely-extensible nonlinear elastic (FENE) type models; see, for instance,~\cite{BaSu2011, BaSu2012, BaSu2012a, BaSu2012b, Ma2013, Ma2011, JoLeDB}. In the classical FENE model due to Warner \cite{Warner} the configuration domain $D$ is a bounded open ball in $\mathbb{R}^d$ of radius $\sqrt{b}$, with $b>2$, centred at the origin, and the elastic spring-potential is given by
\begin{equation}
	U(s) = -\frac{b}{2}\ln\prt*{1-\frac{2s}{b}},\quad s\in \left[0,\frac{b}{2}\right).
\end{equation}
Thus, as the function $q\mapsto U(|q|^2/2)$ and the associated spring-force both blow up at the boundary $\partial D$ of the configuration domain $D$, polymer chains are prevented from exhibiting indefinite stretching.
Global existence of large-data weak solutions to a general class of FENE-type models with centre-of-mass diffusion, $\mu>0$, and the equilibration of weak solutions to the model as $t \rightarrow +\infty$, were proved by Barrett \& S\"uli in \cite{BaSu2011}. The global existence of weak solutions for the nondiffusive (i.e., when $\mu=0$) FENE model was proved in the paper of Masmoudi~\cite{Ma2013}. It is not currently  known whether weak solutions of the model with stress diffusion ($\mu>0$) converge, in any sense, to weak solutions of the nondiffusive model as $\mu \rightarrow 0_+$. Existence results 
are also available for diffusive Hookean-type models, where the Maxwellian is super-Gaussian, i.e., when $D=\mathbb{R}^d$ and $M$ exhibits more rapid decay to $0$ as $|q| \rightarrow \infty$ than the Gaussian Maxwellian associated with the standard Hookean model considered herein; see~\cite{BaSu2012}.

Another variant of~\eqref{eq:NSFP}, which has been widely studied in the mathematical literature, is the corotational model. It arises by replacing the velocity gradient $\nabla_x u$ in the $q$-convective term in the Fokker--Planck equation by the vorticity tensor 
$\omega(u):=\frac12((\nabla_x u) - (\nabla_x^{\mathrm T} u))$. In this case, one can derive stronger \emph{a priori} estimates on the probability density function $\hpsi$ than in the noncorotational case studied here.
In the presence of centre-of-mass diffusion, the global existence of weak solutions was shown for a general class of corotational FENE models in~\cite{BaSu2008}, and for the corotational Hookean model in~\cite{DebiecSuli}. In the nondiffusive case, $\mu=0$, global well-posedness is known only in two space dimensions~\cite{Ma2011, MasmoudiZhangZhang}.

Macroscopic models for polymeric flows are also of significant interest. The global existence of weak solutions to the two-dimensional Oldroyd-B model~\eqref{eq:OB} with stress diffusion was shown by Barrett and Boyaval in~\cite{barrett-boyaval-09}. Global regularity was then shown by Constantin and Kliegl in~\cite{CK12}. For the nondiffusive case, local well-posedness was shown in critical Besov spaces by Chemin and Masmoudi~\cite{CheminMasmoudi} under a Beale--Kato--Majda type assumption. Lions and Masmoudi~\cite{LiMa2000} proved global existence of weak solutions to the corotational Oldroyd-B model.

Mindful of the difficulties associated with proving the global existence of weak solutions to~\eqref{eq:NSFP}, we propose to relax the definition of weak solution and consider a class of \emph{generalised} solutions, referred to as \emph{generalised dissipative solutions}. The main idea is to allow for a defect to develop in the Navier--Stokes equation -- mathematically, this entails substituting the extra-stress tensor, $\tau$, by $\tau + m_{NS}$, where $m_{NS}$ is a tensor-valued measure. This approach was recently suggested by Barrett and S\"uli in~\cite{BaSu18}, who showed the global existence of weak \emph{subsolutions} to the Hookean Navier--Stokes--Fokker--Planck model in three dimensions. Here, we revisit the idea of~\cite{BaSu18} giving a different proof. Crucially, we are able to identify a \emph{dissipation defect} measure, which dominates the aforementioned measure $m_{NS}$ and satisfies an appropriate energy inequality -- namely, the energy inequality associated with the macroscopic Oldroyd-B model. Armed with this energy inequality we are able to prove a compatibility result: any generalised dissipative solution with appropriate regularity is in fact a weak solution.
Altogether, our framework of generalised dissipative solutions is similar to the concept of dissipative solutions introduced in~\cite{AbFe, AbFeNo} in the context of incompressible and compressible viscous fluids, as well as to the dissipative measure-valued solutions; see, for instance,~\cite{FGSW2016}. 
One particularly interesting open problem is to investigate whether our generalised dissipative solutions (or a subclass of these) satisfy the weak-strong uniqueness property, i.e., whether they necessarily coincide with the strong solution emanating from the same initial data. The usual tool in the quest for such a result is to establish a relative energy/entropy inequality.
This, however, seems to be a particularly challenging task, even on a formal level (again, because of the lack of satisfactory information concerning the derivatives of $\hpsi$), and is left for future work.

\medskip\noindent{{\it Outline of the paper.}} In the rest of this section we introduce the necessary notation and function spaces, recall the definition of \textit{weak solution} to the Navier--Stokes--Fokker--Planck system~\eqref{eq:NSFP}, and formally explain the associated energy inequality. Then, in Section~\ref{sec:GeneralisedSolutions} we define \emph{generalised dissipative solutions} to~\eqref{eq:NSFP}. For the Fokker--Planck equation this does not differ from the definition of a weak solution, but the Navier--Stokes equation contains a potentially nontrivial defect measure accounting for possible lack of compactness in the extra stress tensor $\tau$. We postulate however that a macroscopic energy-type inequality is satisfied. This inequality also contains a defect due to the presence of a defect in the stress tensor (or, more precisely, the conformation tensor), which dominates the defect present in the momentum equation. This property allows us to derive a conditional regularity property: any generalised dissipative solution, which enjoys suitable additional regularity of the velocity field, is in fact a weak solution of the NSFP system. This is proved in Section~\ref{sec:Conditional}. In Section~\ref{sec:MainResults} we state the main results of this paper. In Section~\ref{sec:Existence} we prove global existence of generalised dissipative solutions for any number of space dimensions $d\geq 2$. We first truncate the problem in two ways: we restrict the configuration space to a bounded domain, and we truncate the probability density function in the $q$-convective term in order to guarantee its boundedness. The right-hand side of the Navier--Stokes momentum balance equation is modified accordingly in order to preserve the energy identity for the truncated system. We then employ a two-step Galerkin approximation to show the existence of solutions to the truncated system, following which we remove the truncations. In Section~\ref{sec:Existence2d} we establish additional regularity of solutions for the two-dimensional case. In particular, we derive an evolution equation for the strong limit of the sequence of approximate conformation tensors. We cannot guarantee that this limit coincides with the conformation tensor defined by the solution $\hpsi$ of the Fokker--Planck equation, but we can ensure that the macroscopic equation for the stress tensor is the Oldroyd-B equation. 
Furthermore, we can use that equation to formulate a simple condition when the generalised dissipative solution whose existence we have shown is in fact a weak solution of the NSFP system, by showing that the defect measure in the stress tensor vanishes. For instance, this can be achieved by assuming some additional regularity of the initial data. In this way, we extend and improve the existence result from~\cite{BaSu18}, as we require less regular initial data.

\subsection{Preliminaries}

Before stating the definition of a weak solution to the system~\eqref{eq:NSFP}, let us introduce the necessary notation regarding the function spaces we will be working with. Let $\Omega\subset \R^d$ be a bounded open set with a Lipschitz boundary, and let $D=\R^d$ be the space of elongations/configurations. Let
\begin{align}
	&L^2_{\mathrm{div}}(\Omega;\R^d) :=\set*{v \in L^2(\Omega;\R^d)\;|\; \div_x v =0} ,\quad V := \set*{ v \in H_0^1(\Omega;\R^d)\;|\; \div_x v = 0 },
\end{align}
and let $L^p_M(\Omega\times D)$, $p\in [1,\infty)$, denote the Maxwellian-weighted $L^p$ space with the norm
\begin{equation}
	\norm{\phi}_{L^p_{M}(\Omega\times D)} := \prt*{\intOD M|\phi|^p\dx{q}\dx{x}}^{1/p}.
\end{equation}
Similarly, we introduce the Maxwellian-weighted Sobolev space $H^1_M(\Omega\times D)$ and its norm by
\begin{align}
	&H^1_M(\Omega\times D) := \set*{ \phi \in L^1_{\mathrm{loc}}(\Omega\times D)\; |\; \norm{\phi}_{H^1_M(\Omega\times D)} <\infty},\\
	&\norm{\phi}_{H^1_M(\Omega\times D)}:=\set*{\intOD M\brk*{|\phi|^2 + |\nabla_x\phi|^2 + |\nabla_q\phi|^2} \dx{q}\dx{x}}^{1/2}.
\end{align}
Of course, $L^p(\Omega\times D)$ and $H^1(\Omega\times D)$ are continuously embedded into the Maxwellian-weighted spaces
$L^p_M(\Omega\times D)$ and $H^1_M(\Omega\times D)$, respectively. We recall from~\cite{BaSu2012a} the following density and compact embedding results:
\begin{align}
	& C^\infty(\overline{\Omega};C_c^\infty(D))\; \text{is dense in $H_M^1(\Omega\times D)$},\\
	& H_M^1(\Omega\times D) \hookrightarrow\hookrightarrow L^2_{M}(\Omega\times D).
\end{align} 
We will also make use of the fractional-order Sobolev space $W^{1/2,4/3}_n(\Omega;\R^{d\times d})$ defined as follows. Let $W^{1/2,4/3}(\Omega;\R^{d\times d})$ denote the space of those $\zeta\in L^{4/3}(\Omega;\R^{d\times d})$ for which
\begin{equation}
    \norm{\zeta}_{W^{1/2,4/3}(\Omega;\R^{d\times d})} := \norm{\zeta}_{L^{4/3}(\Omega;\R^{d\times d})} + \prt*{\intO\intO \frac{|\zeta(x)-\zeta(y)|^{4/3}}{|x-y|^{d+2/3}}\dx{y}\dx{x}}^{3/4} < \infty.
\end{equation}
Then $W^{1/2,4/3}_n(\Omega;\R^{d\times d})$ is defined as the completion of $\{\zeta\in C^\infty(\overline\Omega;\R^{d\times d})\; |\; \nabla_x\zeta\cdot n = 0\;\text{on}\;\partial\Omega\}$ in the norm of $W^{1/2,4/3}(\Omega;\R^{d\times d})$.

Given a normed space $X$, we denote by $M^{-1}X$ the space of all those functions $\phi$ for which $M\phi \in X$. By $X'$ we denote the continuous dual of the space $X$.
Given a time $t \geq 0$, we define $\Omega_t := [0,t] \times\Omega$.

Let $\mathcal{F}:[0,\infty)\to [0,\infty)$ be defined by
\begin{equation}
	\F(0) := 1,\quad \F(s) := s\ln{s} - s + 1\quad \mbox{for $s \in (0,\infty)$}.
\end{equation}
Note that $\F$ is a nonnegative, strictly convex function with $\F'(s)=\ln s$ and $\F''(s)=1/s$ for $s \in (0,\infty)$, and $\F(s)/s \rightarrow \infty$ as $s \rightarrow \infty$.

By $\mathcal{M}^{+}(\overline\Omega;\R^{3\times 3}_{\mathrm{sym}})$ we denote the space of matrix-valued Borel measures on $\overline\Omega$, which take values in the cone of symmetric positive semidefinite matrices. We shall generally identify this space with a subspace of the dual of the space of continuous functions defined on $\overline\Omega$; by $\skp*{\cdot,\cdot}$ we then denote the corresponding duality pairing.
Similarly, the duality pairing between the Sobolev space $H^1(\Omega \times D)$ and its dual space $[H^1(\Omega \times D)]'$ will be denoted by $\langle \cdot, \cdot \rangle_{([H^1(\Omega \times D)]', H^1(\Omega \times D))}$; we shall use an analogous notation 
for the duality pairing between $H^1_M(\Omega \times D)$ and $[H^1_M(\Omega \times D)]'$, as well as for
the duality pairing between $H^1(\Omega)$ and $[H^1(\Omega)]'$.

\subsection{Weak solutions}

\begin{definition}[Weak solution of NSFP]
\label{def:NSFPweak}
	Let $d\in\{2,3\}$, and let $(u_0,\hpsi_0)$ be initial data such that
	\begin{align}
		u_0 \in L^2_{\mathrm{div}}(\Omega;\R^d),\;\; & \;\F(\hpsi_0) \in L_M^1(\Omega\times D),\\
		\hpsi_0(x,q)\geq 0\quad \mbox{for a.e. $(x,q) \in \Omega \times D$},& \;\;\;\int_D M(q)\hpsi_0(x,q)\dx{q} = 1\;\;\; \text{for a.e. $x\in\Omega$}.
	\end{align}
	A pair $(u,\hpsi)$ is a weak solution of the Navier--Stokes--Fokker--Planck system if
	\begin{align}
			u &\in C_{\mathrm{weak}}([0,T];L^2(\Omega;\R^d)) \cap L^2(0,T;V),\\
		\hpsi &\in L^1(0,T;L_M^1(\Omega\times D))\cap H^1(0,T; M^{-1}[H^s(\Omega\times D]'),\;\; s>d+1,\\
        &\quad\hpsi(t,x,q) \geq 0 \quad \mbox{for a.e. $(t,x,q) \in (0,T) \times \Omega \times D$},\\
&\quad \int_D M(q)\hpsi(t,x,q) \dx{q}  = 1\;\;\; \text{for a.e. $(t,x)\in (0,T)\times\Omega$},\\
		\mathcal{F}(\hpsi) &\in L^\infty(0,T; L^1_M(\OD)),\quad \sqrt{\hpsi} \in L^2(0,T;H^1_M(\OD)),
	\end{align}
and the equations are satisfied in the following weak sense for each $t\in[0,T]$:
	\begin{align}
		\intOT & u\cdot \partial_t\vartheta \dx{x}\dx{s} + \intOT \prt*{u\otimes u}:\nabla_x\vartheta \dx{x}\dx{s} - \nu \intOT \nabla_x u : \nabla_x \vartheta \dx{x}\dx{s}\\
		&= \intOT \tau : \nabla_x\vartheta\dx{x}\dx{s} + \intO u(t)\cdot\vartheta(t)\dx{x} - \intO u_0\cdot \vartheta(0) \dx{x}\\
		&\forall\, \vartheta \in \set*{ \vartheta \in L^2(0,T;V) \;|\; \partial_t\vartheta\in L^1(0,T;L^2(\Omega;\R^d))},
	\end{align}
and
	\begin{align}
		\intOT\int_D & M\hpsi\partial_t\phi \dx{q}\dx{x}\dx{s} + \int_{\Omega\times D} M\hpsi_0 \phi(0)\dx{q}\dx{x} - \int_{\Omega\times D} M\hpsi(t)\phi(t)\dx{q}\dx{x}\\
	& = -\intOT\int_D M\hpsi u\cdot \nabla_x\phi\dx{q}\dx{x}\dx{s} - \intOT\int_D M\hpsi \prt{(\nabla_x u) q} \cdot \nabla_q \phi\dx{q}\dx{x}\dx{s} \\
	&\quad + \mu\intOT\int_D M\nabla_x\hpsi\cdot\nabla_x\phi\dx{q}\dx{x}\dx{s}
	+ \intOT\int_D M\nabla_q\hpsi\cdot\nabla_q\phi\dx{q}\dx{x}\dx{s}\\
	&\quad \forall\, \phi\in W^{1,1}(0,T;H^s(\Omega\times D))
	\end{align}	
for $s>d+1$. 
\end{definition}
 
\begin{definition}[Weak solution of the Oldroyd-B system in 2D]
	\label{def:OldBweak}
	Let $d = 2$, and let $(u_0,\sigma_0)$ be initial data such that
	\begin{equation}
		u_0 \in L^2_{\mathrm{div}}(\Omega;\R^2),	\quad \sigma_0 \in L^2(\Omega;\R^{2\times 2}),\quad \sigma_0^{\mathrm{T}} = \sigma_0.
	\end{equation}
	A pair $(u,\sigma)$ is a weak solution of the Oldroyd-B system if
	\begin{align}
			u &\in C_{\mathrm{weak}}([0,T];L^2(\Omega;\R^2)) \cap L^2(0,T;V),\\
			\sigma &\in C_{\mathrm{weak}}([0,T];L^2(\Omega;\R^{2\times 2})) \cap L^2(0,T;H^1(\Omega;\R^{2\times 2})),
	\end{align}
	and the equations are satisfied in the following weak sense for each $t\in[0,T]$:
	\begin{align}
		\intOT & u\cdot \partial_t\vartheta \dx{x}\dx{s} + \intOT \prt*{u\otimes u}:\nabla_x\vartheta \dx{x}\dx{s} - \nu \intOT \nabla_x u : \nabla_x \vartheta \dx{x}\dx{s}\\
		&= \intOT \tau : \nabla_x\vartheta\dx{x}\dx{s} + \intO u(t)\cdot\vartheta(t)\dx{x} - \intO u_0\cdot \vartheta(0) \dx{x}\\
		&\forall\, \vartheta \in \set*{ \vartheta \in L^2(0,T;V) \;|\; \partial_t\vartheta\in L^1(0,T;L^2(\Omega;\R^2))},
	\end{align}
and
	\begin{equation}
	\label{eq:OBSigma}
	\begin{aligned}
	-&\int_0^t\intO \sigma : \partial_t\varphi \dx{x}\dx{s} - \intO \sigma_0 : \varphi(0) \dx{x} + \intO \sigma(t) : \varphi(t) \dx{x}\\
	&=-\int_0^t\intO\prt*{u\cdot\nabla_x}\sigma : \varphi\dx{x}\dx{s} - \mu\int_0^t\intO\nabla_x\sigma :: \nabla_x\varphi \dx{x}\dx{s} - 2\int_0^t\intO\prt*{\sigma-\Id} : \varphi\dx{x}\dx{s}\\
	&\;\;\;+\int_0^t\intO\sigma\prt*{\nabla_xu}^{\mathrm{T}} : \varphi \dx{x}\dx{s} + \int_0^t\intO(\nabla_x u) \sigma : \varphi \dx{x}\dx{s}\\
	&\forall\, \varphi\in W^{1,1}(0,T;H^1(\Omega;\R^{2\times 2})).
	\end{aligned}
	\end{equation}
\end{definition}

The question of existence of weak solutions to the Navier--Stokes--Fokker--Planck system defined in this way is open:\ it is completely unknown in the case of $d=3$ space dimensions whether weak solutions, thus defined, exist, while for $d=2$ it was shown by Barrett and S\"uli~\cite{BaSu18} that for initial data, smoother than what has been supposed above, weak solutions do exist globally in time. Their strategy was the following: thanks to earlier work by Barrett and Boyaval~\cite{barrett-boyaval-09}, it is known that there exists a global-in-time weak solution to the two-dimensional Oldroyd-B system~\eqref{eq:OB}. Assuming that $\partial\Omega\in C^{2,\alpha}$, $\alpha\in(0,1)$, and that the initial data additionally satisfy
\begin{equation}
	u_0 \in V,\quad \sigma(\hpsi_0) \in H^1_n(\Omega;\R^{2\times 2}),
\end{equation}
where $H^{1}_n(\Omega;\R^{2\times 2})$ is the completion of $\set*{\zeta\in C^\infty(\overline\Omega;\R^{2\times 2})\;|\; \nabla_x\zeta\cdot\hat n = 0\;\text{on}\;\partial\Omega}$ in the norm of $H^1(\Omega;\R^{2\times 2})$,
one can employ parabolic regularity results to infer additional smoothness of the associated weak solution to~\eqref{eq:OB}:
\begin{align}
	u &\in L^\infty(0,T;V) \cap L^2(0,T;H^2(\Omega;\R^2)) \cap L^{\frac43}(0,T;W^{1,\infty}(\Omega;\R^2)) \cap H^1(0,T;L^2_{\mathrm{div}}(\Omega;\R^2)),\\
	\sigma &\in C([0,T];H^1(\Omega;\R^{2\times 2})) \cap L^2(0,T;H^2(\Omega;\R^{2\times 2})) \cap H^1(0,T;L^2(\Omega;\R^{2\times 2})).
\end{align}
Using the extra regularity of the velocity field (in particular that  $\nabla_x  u\in L^{\frac43}(0,T; L^\infty(\Omega;\mathbb{R}^{2 \times 2})$), one can show global existence of a weak solution $\hpsi$ to the Fokker--Planck equation (via a discrete-in-time approximation and passing to the limit $\Delta t \rightarrow 0_+$ with the time-step). Finally, the identification $\sigma = \sigma(\hpsi)$ is possible, again thanks to the additional regularity of the velocity field. This programme cannot be repeated when $d=3$, because of the lack of proof of the existence of weak solutions to the Oldroyd-B system in three space dimensions. Instead, in~\cite{BaSu18}, the authors show existence of a generalised solution which includes the divergence (with respect to $x$) of a symmetric positive-semidefinite matrix-valued defect measure on the right-hand side of conservation of linear momentum equation \eqref{eq:NSFP}$_1$. However, because of the lack of an energy inequality involving the defect measure, they were unable to identify any conditions under which the defect measure would necessarily vanish, even in two space dimensions. Motivated by these incomplete results, we propose here an extended concept of solution to~\eqref{eq:NSFP}, \textit{generalised dissipative solution}, incorporating into the definition an energy inequality coming from the Oldroyd-B model. We shall then prove that such generalised dissipative solutions to~\eqref{eq:NSFP} exist, and this will enable us to identify sufficient conditions in both $d=2$ and $d=3$ space dimensions under which the defect measure featuring in the definition of generalised dissipative solution vanishes.

\subsection{Formal energy estimates}
There is an energy equality associated with~\eqref{eq:NSFP}. The following calculations are formal, but they will be justified rigorously for the Galerkin approximations of generalised dissipative solutions.
Testing the Navier--Stokes equation with $u$ and the Fokker--Planck equation with $\F'(\hpsi)$, we obtain, respectively
\begin{equation}
\label{eq:NSenergy}
	\ddt \intO \frac12 |u|^2\dx{x} + \nu \intO |\nabla_x u|^2\dx{x} = -\intO \tau : D(u)\dx{x},
\end{equation}
and
\begin{align}
\label{eq:FPenergy}
	\ddt \int_{\Omega\times D}  M\F(\hpsi)\dx{q}\dx{x} &+4\mu\int_{\Omega\times D} M\abs*{\nabla_x\sqrt{\hpsi}}^2 \dx{q}\dx{x} + 4\int_{\Omega\times D} M\abs*{\nabla_q\sqrt{\hpsi}}^2\dx{q}\dx{x}\\
	 &= \intOD M\hpsi \prt*{(\nabla_x u) q}\cdot \nabla_q\F'(\hpsi)\dx{q}\dx{x}\\
	 &= \intOD M \prt*{(\nabla_x u) q}\cdot \nabla_q\hpsi\dx{q}\dx{x}\\
	 &= -\intOD \hpsi\prt*{(\nabla_x u) q}\cdot \nabla_q M\dx{q}\dx{x}\\
	 &= \intOD M\hpsi (q\otimes q) : \nabla_x u \dx{q}\dx{x}\\
	 &= \intO \tau : D(u) \dx{x},
\end{align}
where we have used the observation that $\div_q\prt*{(\nabla_x u) q} = 0$.
Thereby, we obtain the energy equality
\begin{equation}
\begin{split}
\label{eq:NSFPenergy}
	\ddt &\prt*{\intO \frac12 |u|^2\dx{x}  + \int_{\Omega\times D} M\F(\hpsi)\dx{q}\dx{x}} + \nu \intO |\nabla_x u|^2\dx{x}\\
	 &+4\mu\int_{\Omega\times D} M\abs*{\nabla_x\sqrt{\hpsi}}^2 \dx{q}\dx{x} + 4\int_{\Omega\times D} M\abs*{\nabla_q\sqrt{\hpsi}}^2\dx{q}\dx{x} = 0.
\end{split}
\end{equation}

\medskip
There is also a formal energy equality associated with the conformation tensor formulation of the Oldroyd-B system. Contracting equation~\eqref{eq:OB-sigma} with the identity tensor, we obtain
\begin{align}
	\ddt \intO \tr{\sigma} \dx{x} + 2\intO \tr{\prt*{\sigma-\Id}} \dx{x} &= \intO \brk*{(\nabla_x u) \sigma + \sigma (\nabla_x^{\mathrm{T}} u)} : \Id \dx{x}\\
	&= 2\intO \sigma : D(u)\dx{x} = 2\intO \tau : D(u)\dx{x}.
\end{align}
By combining this equality with~\eqref{eq:NSenergy}, we arrive at the formal energy identity
\begin{align}
\label{eq:OBenergy}
	\ddt \prt*{\intO \frac12 |u|^2\dx{x} + \frac{1}{2}\intO \tr{\sigma} \dx{x}} + \nu \intO |\nabla_x u|^2\dx{x} + \intO \tr{\prt*{\sigma-\Id}} \dx{x} = 0.
\end{align}

\begin{remark}
	Equality~\eqref{eq:OBenergy} provides a priori estimates for the Oldroyd-B system only as long as $\tr\sigma > d$, or $\det\sigma >1$. As shown in~\cite{HuLe2007}, this property is propagated from the initial state (at least for smooth solutions). Since in our construction of generalised solutions we rely on the entropy identity~\eqref{eq:NSFPenergy} as the source of uniform bounds, we shall not make any such assumption on the initial conformation tensor, accepting to be lax about the term ``energy'' in reference to~\eqref{eq:OBenergy}.
\end{remark}

\begin{remark}
	We note that one cannot infer a formal energy equality involving the function $\hpsi$ in the $L^2_M(\OD)$ norm. 
The only formal energy equality involving $\hpsi$ is the equality \eqref{eq:NSFPenergy} stated above. We find it remarkable though that the energy equality~\eqref{eq:NSFPenergy} holds, particularly because the term
	\begin{equation}
	 -\intO \tau : D(u) \dx{x},	
	 \end{equation}
appearing in the process of its derivation, is not well defined unless additional regularity of either $\hpsi$ or $u$  is known. A similar, but even worse, problem arises in the Oldroyd-B model in three spatial dimensions, where it is not clear whether the term
	\begin{equation}
	\intOT \prt*{(\nabla_x u) \tau} : \varphi \dx{x}\dx{s}
	\end{equation}	 
	featuring in the weak formulation of the model is well defined -- unless $\tau\in L^2(0,T;L^2(\Omega;\R^{d\times d}))$, which cannot be deduced from any known estimates when $d=3$. This is the main obstacle to proving the global existence of weak solutions to the Oldroyd-B model in three space dimensions.
\end{remark}

\section{Generalised dissipative solutions}
\label{sec:GeneralisedSolutions}

\begin{definition}[Generalised dissipative solutions]
\label{def:GenSol3d}
		Let $(u_0,\hpsi_0)$ satisfy
		\begin{equation}
		\label{eq:Data3d}
		\begin{split}
		u_0 \in L^2_{\mathrm{div}}(\Omega;\R^d),\;\; & \; \mathcal{F}(\hpsi_0) \in L_M^1(\Omega\times D),\\
		\hpsi_0(x,q)\geq 0\quad \mbox{for a.e. $(x,q) \in \Omega \times D$},& \;\;\;\int_D M(q)\hpsi_0(x,q)\dx{q} = 1\;\;\; \text{for a.e. $x\in\Omega$}.
	\end{split}
	\end{equation}
	A quadruple $(u,\hpsi, m_{NS}, m_{OB})$ is a generalised dissipative solution to the Navier--Stokes--Fokker--Planck system~\eqref{eq:NSFP} if
	\begin{equation}
		u \in C_{\mathrm{weak}}([0,T];L^2(\Omega;\R^d)) \cap L^2(0,T;V), 
	\end{equation}
	\begin{equation}
		\hpsi \in L^1(0,T;L_M^1(\Omega\times D))\cap H^1(0,T; M^{-1}[H^s(\Omega\times D]'),\;\; s>d+1,
	\end{equation}
        \begin{equation}
            \quad \mathcal{F}(\hpsi) \in L^{\infty} (0,T;L^1_M(\OD)), \quad \sqrt{\hpsi} \in L^2(0,T;H^1_M(\OD)),
        \end{equation}
        \begin{equation}	
        \hpsi(t,x,q) \geq 0 \quad \mbox{for a.e. $(t,x,q) \in (0,T) \times \Omega \times D$},
        \end{equation}
        \begin{equation}
        \int_D M(q)\hpsi(t,x,q) \dx{q}  = 1\;\;\; \text{for a.e. $(t,x)\in (0,T)\times\Omega$},\\
        \end{equation}
	\begin{equation}
		m_{NS} \in L^\infty(0,T;\mathcal{M}^{+}(\overline\Omega;\R^{d\times d}_{\mathrm{sym}})),\quad m_{OB} \in L^\infty(0,T;\mathcal{M}^+(\overline\Omega)), 
	\end{equation}
	and the following identities are satisfied for all $t\in[0,T]$:
	\begin{align}
	\label{eq:GeneralisedNS3D}	
			\intOT & u\cdot \partial_t\vartheta \dx{x}\dx{s} + \intOT (u \otimes u) : \nabla_x \vartheta \dx{x}\dx{s} - \nu \intOT \nabla_x u : \nabla_x \vartheta \dx{x}\dx{s}\\
		&= \intOT \sigma(\hpsi) : \nabla_x\vartheta\dx{x}\dx{s} + \int_0^t \skp*{m_{NS}(s), \nabla_x\vartheta}\dx{s} + \intO u(t)\cdot\vartheta(t)\dx{x} - \intO u_0\cdot \vartheta(0) \dx{x}\\
		&\forall\, \vartheta \in \set*{ \vartheta \in L^2(0,T;V) \cap L^1(0,T;C^1(\overline\Omega;\mathbb{R}^d))
				\;|\; \partial_t\vartheta\in L^1(0,T;L^2(\Omega;\R^d))};
	\end{align}
	for $s>d+1$,
	\begin{equation}
	\label{eq:GeneralisedFP3D}
	\begin{split}
	\intOT\int_D & M\hpsi\partial_t\phi \dx{q}\dx{x}\dx{s} + \int_{\Omega\times D} M\hpsi_0\phi(0)\dx{q}\dx{x} - \int_{\Omega\times D} M\hpsi(t)\phi(t)\dx{q}\dx{x}\\
	& = -\intOT\int_D M\hpsi u\cdot \nabla_x\phi\dx{q}\dx{x}\dx{s} - \intOT\int_D M\hpsi \prt{(\nabla_x u) q} \cdot \nabla_q \phi\dx{q}\dx{x}\dx{s} \\
	&\quad + \mu\intOT\int_D M\nabla_x\hpsi\cdot\nabla_x\phi\dx{q}\dx{x}\dx{s}
	+ \intOT\int_D M\nabla_q\hpsi\cdot\nabla_q\phi\dx{q}\dx{x}\dx{s}\\ 
	&\quad \forall\, \phi\in W^{1,1}(0,T;H^s(\Omega\times D));
	\end{split}
	\end{equation}
	and the following inequality holds for a.e.\ $t\in(0,T)$:
	\begin{equation}
	\label{eq:GeneralisedEI3D}
	\begin{split}
		\frac12\intO &|u(t)|^2\dx{x} + \frac12\intO\tr{\sigma(\hpsi(t))}\dx{x} + \skp*{m_{OB}(t),\mathbbm{1}_{\overline\Omega}}  + \nu\intOT |\nabla_x u|^2\dx{x}\dx{s}\\
		&+ \intOT\tr{\prt*{\sigma(\hpsi)-\Id}}\dx{x}\dx{s} + \int_0^t \skp*{m_{OB}(s),\mathbbm{1}_{\overline\Omega}}\dx{s}
		\leq \frac12\intO |u_0|^2\dx{x} + \frac12\intO \tr{\sigma(\hpsi_0)}\dx{x}.
	\end{split}
	\end{equation}
	Finally, we require that the following compatibility condition is satisfied by the defect measures $m_{NS}$ and $m_{OB}$:\ there exists a $\zeta\in L^1(0,T)$ such that, for a.e.\ $t\in (0,T)$,
	\begin{equation}
	\label{eq:GeneralisedCompatibility}
		\abs*{\skp*{m_{NS}(t), \nabla_x\vartheta}} \leq \zeta(t)\norm{\vartheta}_{C^1(\overline\Omega;\R^d)}\skp*{m_{OB}(t),\mathbbm{1}_{\overline\Omega}}\quad \forall\, \vartheta \in C^1(\overline\Omega;\R^d).
	\end{equation}
\end{definition}

\begin{remark}
	We note that since we are using divergence-free test functions for the balance of linear momentum equation \eqref{eq:GeneralisedNS3D}, we can freely replace $\tau(\hpsi)$ by $\sigma(\hpsi)$ in the weak formulation. Indeed,
	\begin{equation}
		\intOT \prt*{\int_D M\hpsi\dx{q}} \Id : \nabla_x\vartheta\dx{x}\dx{s} = \intOT \prt*{\int_D M\hpsi\dx{q}} \div_x \vartheta\dx{x}\dx{s} =0.
	\end{equation}
\end{remark}

\begin{remark}
	In the two-dimensional case, we will show existence in a class where the corrected conformation tensor $\bar\sigma := m_{NS} + \sigma(\hpsi)$ satisfies the stress evolution equation associated with the Oldroyd-B system, equation~\eqref{eq:OB-sigma}. In effect, the generalised solution in this case can be thought of as a weak solution of the Oldroyd-B system together with the probability density $\hpsi$ satisfying the Fokker--Planck equation.
\end{remark}

\section{Main results}
\label{sec:MainResults}
The main purpose of this paper is to establish global existence of generalised dissipative solutions as defined in Section~\ref{sec:GeneralisedSolutions}, and discuss some of their properties, including in particular instances when the defect measures appearing in the definition of generalised dissipative solution vanish.

\begin{theorem}[Existence]
\label{thm:Existence}
	Let $d\geq 2$. There exists at least one generalised dissipative solution to the system~\eqref{eq:NSFP} with $m_{OB} = \tr{m_{NS}}$.
	Additionally, this solution is entropy-dissipative in the sense that the following energy inequality is satisfied for a.e.\ $t\in(0,T)$:
    \begin{equation}
    \begin{split}
	\label{eq:LlogLenergy}
	\frac12&\intO |u(t)|^2\dx{x} + \intOD M\F(\hpsi(t))\dx{q}\dx{x} + \nu\intOT |\nabla_x u|^2\dx{x}\dx{s}\\
	 &\quad + 4 \int_0^t\intOD M\prt*{\mu\abs*{\nabla_x\sqrt{\hpsi}}^2 + \abs*{\nabla_q\sqrt{\hpsi}}^2}\dx{q}\dx{x}\dx{s}\\
	 & \leq  \frac12\intO |u_0|^2\dx{x} + \intOD M\F(\hpsi_0)\dx{q}\dx{x}.
	\end{split}
    \end{equation} 
    Furthermore, when $d=2$ and $\sigma(\hpsi_0) \in L^2(\Omega;\R^{2\times 2})$, $\sigma_0^{\mathrm{T}}=\sigma_0$, we also have
 \begin{equation}
     \sigma(\hpsi),\, m_{NS} \in L^q(0,T;L^p(\Omega;\R^{2\times 2})),\quad q=\frac{2p}{p-2},\quad 2\leq p <\infty.
 \end{equation}
	Moreover, the pair $(u, \bar\sigma := m_{NS}+\sigma(\hpsi))$ is a unique weak solution of the Oldroyd-B system according to Definition~\ref{def:OldBweak}.
\end{theorem}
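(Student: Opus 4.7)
The plan is to construct the generalised dissipative solution as a weak limit of smooth solutions to a doubly regularised problem, exploiting both formal energy identities~\eqref{eq:NSFPenergy} and~\eqref{eq:OBenergy} simultaneously. First, I would truncate the configuration domain to a ball $D_R:=B_R(0)\subset D$ with no-flux boundary conditions in $q$, and replace the drag term $(\nabla_x u)q\hpsi$ in the Fokker--Planck equation~\eqref{eq:NSFP}$_3$ by $(\nabla_x u)q\,T_L(\hpsi)$ for a smooth cut-off $T_L$ at height $L$. To preserve both formal energy equalities at the truncated level, the polymeric extra stress on the right-hand side of~\eqref{eq:NSFP}$_1$ is built from $T_L(\hpsi)$ in the same way. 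On the resulting doubly-truncated system I would run a two-level Galerkin scheme in $x$ (first for $u$, using a Stokes-eigenfunction basis, then for $\hpsi$ on an $H^1_M$-orthonormal basis), obtaining both energy identities exactly at the Galerkin level; these immediately provide uniform control in the norms appearing in Definition~\ref{def:GenSol3d}.

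One passes successively to the limits in the Galerkin index, in $L\to\infty$, and in $R\to\infty$. The Fokker--Planck equation~\eqref{eq:GeneralisedFP3D} and the entropy inequality~\eqref{eq:LlogLenergy} survive unchanged via standard parabolic compactness arguments. In the Navier--Stokes equation, however, no strong compactness of $\sigma(\hpsi^n)$ in $L^1$ is available; the concentration defect of the non-negative symmetric sequence $\sigma(\hpsi^n)\geq 0$ produces a matrix-valued measure $m_{NS}\in L^\infty(0,T;\mathcal{M}^+(\overline\Omega;\R^{d\times d}_{\mathrm{sym}}))$, and one sets $m_{OB}:=\tr m_{NS}$. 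The compatibility condition~\eqref{eq:GeneralisedCompatibility} then holds with a constant $\zeta$: since $m_{NS}(t,\cdot)$ takes values in the positive semi-definite cone, the pointwise bound $A:B\leq \abs{A}\,\tr B$, valid for any matrix $A$ and positive semi-definite $B$, gives $\abs{\skp*{m_{NS}(t),\nabla_x\vartheta}}\leq \norm{\vartheta}_{C^1(\overline\Omega;\R^d)}\skp*{m_{OB}(t),\mathbbm{1}_{\overline\Omega}}$. The inequality~\eqref{eq:GeneralisedEI3D} is obtained by taking $\liminf$ in the (exact) approximate version of~\eqref{eq:OBenergy} together with the weak-$\ast$ convergence $\tr\sigma(\hpsi^n)\rightharpoonup^{\ast}\tr\sigma(\hpsi)+m_{OB}$ and standard lower semi-continuity of convex functionals.

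For the two-dimensional refinement, I would employ the conformation-tensor formulation~\eqref{eq:OB-sigma} at the approximate level: the tensor $\bar\sigma^n$ built from the $n$-th Galerkin density solves an Oldroyd-B stress equation driven by $u^n$ with initial datum $\sigma(\hpsi^n_0)$. Testing by $\bar\sigma^n$ and combining with the kinetic-energy identity, as in~\cite{barrett-boyaval-09}, yields an $L^\infty(0,T;L^2)\cap L^2(0,T;H^1)$ bound on $\bar\sigma^n$ provided $\nabla_x u^n$ is controlled in $L^{4/3}(0,T;L^\infty)$, which in two dimensions follows from $\bar\sigma^n\in L^\infty_tL^2_x$ via Stokes regularity and the standard bootstrap of~\cite{CK12}. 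Interpolation between $L^\infty(0,T;L^2)$ and $L^2(0,T;H^1)\hookrightarrow L^2(0,T;L^p)$ (valid for every $p<\infty$ in two dimensions) produces the claimed $L^q_tL^p_x$ bound with $q=2p/(p-2)$ on $\sigma(\hpsi^n)$, passed to $\sigma(\hpsi)$ and $m_{NS}$ under weak-$\ast$ limits. The strong $L^2_tL^2_x$ compactness of $\bar\sigma^n$, provided by the parabolic structure of~\eqref{eq:OB-sigma} and the higher integrability, allows one to pass to the limit in the convective and stretching terms and to conclude that $(u,\bar\sigma)$ is a weak solution of the Oldroyd-B system in the sense of Definition~\ref{def:OldBweak}. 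Uniqueness in this class follows from the Constantin--Kliegl~\cite{CK12} global regularity theorem combined with a weak-strong uniqueness argument on the Oldroyd-B side.

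The main technical obstacle is the limit passage in the stretching term $(\nabla_x u^n)\bar\sigma^n+\bar\sigma^n(\nabla_x^{\mathrm T} u^n)$ in~\eqref{eq:OB-sigma}. For $d\geq 3$ this term is not known to be well defined for a generalised dissipative solution possessing only the bounds implied by~\eqref{eq:LlogLenergy}, which is precisely why the defect cannot be eliminated outside the two-dimensional regular regime; it is closed in two dimensions only after the $L^q_tL^p_x$ upgrade. A secondary subtlety is the \emph{simultaneous} survival of~\eqref{eq:GeneralisedEI3D} and~\eqref{eq:LlogLenergy} in the limit, together with the algebraic identity $m_{OB}=\tr m_{NS}$: the approximation must be designed so that both energy identities are exact at every level, which is the reason for compensating the Navier--Stokes stress in the truncation step. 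The remaining nontrivial point is verifying the positive semi-definite structure of $m_{NS}$ in the limit, which relies on the fact that $\sigma(\hpsi^n)=\int_D M\hpsi^n\,(q\otimes q)\dx{q}\geq 0$ pointwise, a property preserved under weak-$\ast$ limits in the cone of matrix-valued non-negative measures.
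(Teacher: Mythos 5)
Your overall blueprint — two-level Galerkin approximation of a doubly truncated system, carrying both the relative-entropy identity~\eqref{eq:NSFPenergy} and the macroscopic conformation-tensor identity~\eqref{eq:OBenergy} through the limit, producing a matrix-valued concentration defect $m_{NS}$ and setting $m_{OB}=\tr m_{NS}$, with the compatibility estimate coming from the elementary inequality $\abs{A:B}\leq\norm{A}_{\mathrm{op}}\tr B$ for $B\geq 0$ — matches the paper's strategy in broad strokes. However, several steps as proposed would not go through. First, the truncation: you truncate the drag term by $T_L(\hpsi)$ and build the stress from $T_L(\hpsi)$ ``in the same way''. With this choice the cancellation between the Navier--Stokes and Fokker--Planck energy identities fails, because testing the Fokker--Planck equation with $\mathcal{F}'(\hpsi)=\log\hpsi$ produces the factor $T_L(\hpsi)/\hpsi$, which is \emph{not} the derivative of $T_L$. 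The paper uses $\Lambda_L(s)=s\chi_L(s)$ in the drag and $T_L(s)=\int_0^s\chi_L$ in the stress, precisely so that $\Lambda_L(\hpsi)/(\hpsi+\delta)\,\nabla_q\hpsi = \nabla_q T_{L,\delta}(\hpsi)$ with $T_{L,\delta}\to T_L$; these two truncations are not interchangeable.

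Second, your two-dimensional $L^\infty L^2\cap L^2H^1$ estimate on $\bar\sigma^n$ is circular: you seed it with $\nabla_x u^n\in L^{4/3}(0,T;L^\infty)$, which you in turn derive from $\bar\sigma^n\in L^\infty L^2$, but the entropy estimate only gives $\sigma(\hpsi^n)\in L^\infty L^1$, so there is no starting point for this bootstrap. The paper avoids any extra regularity of $u$ entirely: since $\sigma^{m,R,L}\geq 0$ one has the pointwise bound $\int_D M\hpsi^{m,R,L}|q|^2\dx q = \tr\sigma^{m,R,L}\leq\abs{\sigma^{m,R,L}}$, and then Gagliardo--Nirenberg together with the $\mu$-diffusion closes the Gronwall estimate with $\nabla_x u$ controlled only in $L^2(L^2)$. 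Third, you omit the step — essential to the construction — in which, for each fixed Galerkin index $m$ (smooth velocity), the apparent defect $\bar\mu_\sigma^m-\sigma(\hpsi^m)$ is shown to vanish by comparing the macroscopic energy equality~\eqref{eq:OBEnergyEqualityM} with the energy inequality~\eqref{eq:ConformationEnergy2M} and applying the compatibility estimate plus Gronwall. Without this identification, one cannot even extract the limit $\bar\mu_\sigma$ in $L^\infty(0,T;\mathcal{M})$, and the 2D transfer of $L^q_t L^p_x$ integrability from $\bar\sigma$ to $\sigma(\hpsi)$ relies on $\bar\sigma^m=\sigma(\hpsi^m)$ and a Fatou argument, not just weak-$*$ convergence. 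Finally, uniqueness of the weak Oldroyd-B solution is argued in the paper by a direct relative-energy/Gronwall calculation on the difference of two weak solutions; invoking the Constantin--Kliegl global regularity result would require more regular data than Definition~\ref{def:OldBweak} assumes, so it is not applicable here.
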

Our motivation for the terminology \textit{entropy-dissipative} in Theorem \ref{thm:Existence} is that the second term in the first line of the inequality \eqref{eq:LlogLenergy} represents the relative entropy of the probability density function $\psi$ with respect to the Maxwellian, $M$, while the expression in the second line of \eqref{eq:LlogLenergy}, the so-called Fisher information, quantifies the dissipation of the relative entropy.

\begin{theorem}[Conditional regularity for $d=3$]
\label{thm:Regularity3D}
	Let $d=3$, and let $(u,\hpsi,m_{NS},m_{OB})$ be a generalised dissipative solution. Suppose that
     \[
    \partial_t u\in L^1(0,T;L^2(\Omega;\R^3))\quad \text{and}\quad \nabla_x u\in L^1(0,T;C(\overline\Omega;\R^{3\times 3}));
    \]
     then, $m_{NS}\equiv 0$, $m_{OB}\equiv 0$, and $(u,\hpsi)$ is a weak solution of the Hookean dumbbell model~\eqref{eq:NSFP}.
\end{theorem}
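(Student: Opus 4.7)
The plan is to exploit the extra regularity of $u$ to derive the macroscopic Oldroyd-type energy identity — \emph{cf.}~\eqref{eq:OBenergy} — for the pair $(u,\sigma(\hpsi))$, with an $m_{NS}$-residual on the right-hand side, and then compare it with the postulated inequality~\eqref{eq:GeneralisedEI3D}. The non-negativity of $m_{OB}$ together with the compatibility condition~\eqref{eq:GeneralisedCompatibility} will then force both defect measures to vanish.

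First I would test the generalised momentum equation~\eqref{eq:GeneralisedNS3D} with $\vartheta=u$. The regularity assumptions $\partial_tu\in L^1(0,T;L^2(\Omega;\R^3))$ and $u\in L^2(0,T;V)\cap L^1(0,T;C^1(\overline{\Omega};\R^3))$ (the latter following from $\nabla_xu\in L^1_tC(\overline{\Omega})$ and the homogeneous Dirichlet condition) place $u$ in the admissible test class; the convective term vanishes by incompressibility, and $\sigma(\hpsi):\nabla_xu$ is integrable because $\sigma(\hpsi)\in L^\infty_tL^1_x$ — extracted from $\F(\hpsi)\in L^\infty_tL^1_M$ by a Fenchel--Young pairing with $\F^*(r)=\mathrm{e}^r-1$ using the Gaussian decay of $M$ — while $\nabla_xu\in L^1_tL^\infty_x$. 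Rearranging yields
\begin{equation*}
\tfrac12\norm*{u(t)}_{L^2}^2+\nu\int_0^t\norm*{\nabla_xu}_{L^2}^2\dx{s}=\tfrac12\norm*{u_0}_{L^2}^2-\intOT\sigma(\hpsi):\nabla_xu\dx{x}\dx{s}-\int_0^t\skp*{m_{NS}(s),\nabla_xu(s)}\dx{s}.
\end{equation*}

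Next I would derive the evolution of $\int_D M\hpsi|q|^2\dx{q}=\tr\sigma(\hpsi)$ by testing~\eqref{eq:GeneralisedFP3D} with $\phi_R(x,q)=|q|^2\chi_R(|q|)$, where $\chi_R\in C_c^\infty(D)$ is a smooth radial cut-off increasing to $1$, and passing $R\to\infty$. Using $\nabla_qM=-Mq$, $\div_q((\nabla_xu)q)=0$, and integrating in $x\in\Omega$ (exploiting incompressibility, the Dirichlet condition on $u$ and the Neumann condition on $\nabla_x\hpsi$), the limit identity is
\begin{equation*}
\tfrac12\intO\tr{\sigma(\hpsi)(t)}\dx{x}+\intOT\tr\prt*{\sigma(\hpsi)-\Id}\dx{x}\dx{s}=\tfrac12\intO\tr{\sigma(\hpsi_0)}\dx{x}+\intOT\sigma(\hpsi):\nabla_xu\dx{x}\dx{s}.
\end{equation*}
The $q$-moment bounds $M\hpsi|q|^k\in L^\infty_tL^1_{x,q}$ for $k\in\set*{2,3}$ required for this passage follow from the entropy estimate by the same Fenchel--Young argument. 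Adding these two displayed identities and subtracting from~\eqref{eq:GeneralisedEI3D} cancels the macroscopic terms exactly; writing $g(s):=\skp*{m_{OB}(s),\mathbbm{1}_{\overline{\Omega}}}\geq0$, one is left with
\begin{equation*}
g(t)+\int_0^tg(s)\dx{s}\leq-\int_0^t\skp*{m_{NS}(s),\nabla_xu(s)}\dx{s}\leq\int_0^t\zeta(s)\norm*{u(s)}_{C^1}g(s)\dx{s},
\end{equation*}
where the last inequality is~\eqref{eq:GeneralisedCompatibility} applied with $\vartheta=u(s)$. Iterating this inequality using $g\in L^\infty(0,T)$ one obtains $g(t)\leq\norm*{g}_\infty\prt*{\int_0^t\zeta\norm*{u}_{C^1}\dx{s}}^n/n!$ for every $n\in\mathbb{N}$, whence $g\equiv0$, i.e., $m_{OB}\equiv0$; the compatibility condition then forces $\skp*{m_{NS}(s),\nabla_x\vartheta}\equiv0$ for every $\vartheta\in C^1(\overline{\Omega};\R^3)$, so $m_{NS}$ drops out of~\eqref{eq:GeneralisedNS3D} and the pair $(u,\hpsi)$ is a weak solution of~\eqref{eq:NSFP}.

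The main obstacle I expect is the rigorous derivation of the $\tr\sigma(\hpsi)$ identity: since $|q|^2\notin H^s(\Omega\times D)$, one must construct the approximation $\phi_R$ and verify uniform-in-$R$ integrability of every term in~\eqref{eq:GeneralisedFP3D} — in particular the drag contribution $M\hpsi((\nabla_xu)q)\cdot\nabla_q\phi_R$, which approaches $2M\hpsi(q\otimes q):\nabla_xu$ and requires controlling $M\hpsi|q|^2|\nabla_xu|$ in $L^1$ — in order to justify the limit by dominated convergence. A subsidiary technical point is the local integrability of $\zeta\norm*{u}_{C^1}$ needed to close the Gronwall iteration; this is automatic in the setting of Theorem~\ref{thm:Existence}, where $\zeta$ may be chosen equal to $1$.
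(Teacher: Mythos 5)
Your proposal is correct and follows essentially the same route as the paper: test the momentum equation with $\vartheta=u$, derive the $\tr\sigma(\hpsi)$ evolution by testing the Fokker--Planck equation with a truncation of $|q|^2$ and letting the truncation parameter tend to infinity, combine to obtain the Oldroyd-type energy \emph{equality} with the $m_{NS}$-residual, compare with~\eqref{eq:GeneralisedEI3D}, and close with the compatibility condition and Gronwall. The small sign discrepancy in your penultimate display is harmless since the compatibility bound controls the absolute value, and your remark about integrability of $\zeta\norm*{u}_{C^1}$ is a fair observation that the paper leaves implicit.
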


\begin{corollary}[Conditional regularity for $d=2$]
\label{thm:Regularity2D}
	Let $d=2$ and $\partial\Omega\in C^{2,\alpha}$, $\alpha\in(0,1)$. Suppose further that, in addition to the assumptions of Theorem~\ref{thm:Existence}, 
	$u_0\in V$ and $\hpsi_0$ is such that 
	$\sigma(\hpsi_0) \in W^{1/2,4/3}_n(\Omega;\R^{2\times 2})$. Then, any generalised 
	dissipative solution to~\eqref{eq:NSFP}, as constructed in Theorem~\ref{thm:Existence}, 
	satisfies $m_{NS}\equiv 0$ and $m_{OB} \equiv 0$. Therefore the pair $(u,\hpsi)$ is a weak solution to~\eqref{eq:NSFP} according to Definition~\ref{def:NSFPweak} and the pair $(u,\sigma(\hpsi))$ is the unique weak solution of the Oldroyd-B system according to Definition~\ref{def:OldBweak}.
\end{corollary}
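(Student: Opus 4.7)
The overarching strategy is to upgrade the pair $(u,\bar\sigma)$ supplied by Theorem~\ref{thm:Existence}---which is a weak solution of the 2D Oldroyd-B system with $\bar\sigma = m_{NS} + \sigma(\hpsi)$---into a parabolically regular one, then independently derive the Oldroyd-B stress evolution equation for $\sigma(\hpsi)$, and finally invoke uniqueness of weak Oldroyd-B solutions in two dimensions to force $\bar\sigma = \sigma(\hpsi)$. This identification immediately yields $m_{NS}\equiv 0$, whereupon the compatibility property $m_{OB} = \tr m_{NS}$ gives $m_{OB} \equiv 0$, and $(u,\hpsi)$ becomes a weak solution of~\eqref{eq:NSFP} in the sense of Definition~\ref{def:NSFPweak}.

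\textbf{Step 1 (Parabolic bootstrap for Oldroyd-B).} The hypotheses $u_0\in V$, $\sigma(\hpsi_0)\in W^{1/2,4/3}_n(\Omega;\R^{2\times 2})$, and $\partial\Omega\in C^{2,\alpha}$ enable the parabolic regularity programme used by Barrett and Boyaval~\cite{barrett-boyaval-09} and by Barrett and S\"uli~\cite{BaSu18}: treating the stress equation as a linear parabolic problem for $\bar\sigma$ forced by bilinear terms in $(u,\bar\sigma)$, and the momentum equation as a Navier--Stokes equation driven by $\div_x \bar\sigma$, one alternately improves the regularity of $u$ and of $\bar\sigma$ until saturation. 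The relaxation from the $H^1_n$ initial stress assumption of~\cite{BaSu18} to $W^{1/2,4/3}_n$ here is absorbed by interpolating the fractional-order datum against the $L^q(0,T;L^p(\Omega))$ bounds on $\bar\sigma$ delivered by Theorem~\ref{thm:Existence}. The outcome is
\begin{equation}
u \in L^\infty(0,T;V)\cap L^2(0,T;H^2(\Omega;\R^2))\cap L^{4/3}(0,T;W^{1,\infty}(\Omega;\R^2)),\quad \partial_t u \in L^2(0,T;L^2(\Omega;\R^2)).
\end{equation}

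\textbf{Step 2 (Macroscopic closure for $\sigma(\hpsi)$).} With the improved velocity field in hand, I would test the weak Fokker--Planck equation~\eqref{eq:GeneralisedFP3D} with truncated moments $\phi_R(x,q) = \varphi(x)\,\chi_R(|q|)\, q_i q_j$ for a smooth cutoff $\chi_R$, integrate by parts in $q$, and pass to the limit $R\to\infty$. The drag term then produces exactly $(\nabla_x u)\sigma(\hpsi) + \sigma(\hpsi)(\nabla_x^{\mathrm T} u)$, the $q$-diffusion yields $-2(\sigma(\hpsi)-\Id)$, the $x$-diffusion yields $-\mu\Delta_x \sigma(\hpsi)$, and the $x$-transport gives $(u\cdot\nabla_x)\sigma(\hpsi)$. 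The cutoff commutators, supported on $|q|\sim R$, are controlled by combining the Gaussian decay of $M$, the entropy bound $\F(\hpsi)\in L^\infty(0,T;L^1_M(\OD))$, the $L^q(0,T;L^p(\Omega))$ bounds on $\sigma(\hpsi)$ from Theorem~\ref{thm:Existence}, and $\nabla_x u \in L^{4/3}(L^\infty)$ from Step 1. The conclusion is that $(u,\sigma(\hpsi))$ is a weak solution of the 2D Oldroyd-B system in the sense of Definition~\ref{def:OldBweak} with initial datum $\sigma(\hpsi_0)$.

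\textbf{Step 3 (Uniqueness, and the main obstacle).} Both $(u,\bar\sigma)$ and $(u,\sigma(\hpsi))$ are now weak Oldroyd-B solutions sharing the same velocity field $u$ and the same initial data. A standard $L^2$ energy estimate for the difference $\delta := \bar\sigma - \sigma(\hpsi)$, which satisfies a linear advection-diffusion equation, using $\nabla_x u\in L^{4/3}(0,T;L^\infty)$ to handle the $(\nabla_x u)\delta + \delta(\nabla_x^{\mathrm T} u)$ coupling and Gronwall's inequality, forces $\delta\equiv 0$, concluding the proof. The main obstacle is Step 2: the moments $q\otimes q$ are not admissible test functions in~\eqref{eq:GeneralisedFP3D}, and the ensuing commutator remainders involve integrals of quantities such as $M\hpsi|q|^3|\nabla_x u|$ over $|q|\sim R$ that must be shown to vanish uniformly as $R\to\infty$. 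This delicate interplay between the Gaussian weight and the moment estimates on $\hpsi$ is the technical heart of the argument; once it is secured, the uniqueness step is essentially standard and gives the identifications stated in the corollary.
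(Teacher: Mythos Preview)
Your overall strategy is sound and would lead to a correct proof, but the paper takes a different---and shorter---route after the regularity bootstrap.

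In Step 1 you agree with the paper: both first exploit the Oldroyd-B system satisfied by $(u,\bar\sigma)$, together with the regularity of $\partial\Omega$, $u_0$, and $\sigma(\hpsi_0)$, to upgrade the velocity field (the paper invokes Lemmas 3.2 and 3.6 from~\cite{BaSu18} to obtain $\partial_t u\in L^2(0,T;L^2(\Omega))$ and $u\in L^{4/3}(0,T;W^{2,q}(\Omega))$ for $q<4$, hence $\nabla_x u\in L^1(0,T;C(\overline\Omega))$).

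After that, the paths diverge. The paper does not derive the full Oldroyd-B stress equation for $\sigma(\hpsi)$ and does not invoke uniqueness; instead it simply observes that the regularity just obtained places $u$ in the hypotheses of Theorem~\ref{thm:Regularity3D} and reruns that argument verbatim: test the Fokker--Planck equation with the scalar $T_R(|q|^2)$ (trace only, not the full tensor), test Navier--Stokes with $u$ itself, combine to get an energy \emph{equality} for $\tfrac12\|u\|_{L^2}^2+\tfrac12\int_\Omega\tr\sigma(\hpsi)$, subtract the energy \emph{inequality}~\eqref{eq:GeneralisedEI3D}, and close with the compatibility condition~\eqref{eq:GeneralisedCompatibility} and Gronwall. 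This uses only the trace of the second moment and the built-in dissipative structure, so no $L^2(0,T;H^1)$ bound on $\sigma(\hpsi)$ is ever needed.

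Your route---deriving~\eqref{eq:OBSigma} for $\sigma(\hpsi)$ and then arguing uniqueness of the stress for a given velocity---is essentially the mechanism the paper uses in Corollary~\ref{thm:Integrability2D} (and the remark following it). It works, but there is a gap in your Step 2: the $x$-diffusion term is not a ``cutoff commutator supported on $|q|\sim R$''; passing to the limit there requires a uniform-in-$R$ bound on $\nabla_x\sigma^R$ in $L^2(0,T;L^2(\Omega))$, which you have not derived. The paper obtains the analogous bound (for the component-wise truncation $\sigma^L$) by testing the approximate stress equation with $\sigma^L$ itself; the drag term is then controlled precisely by the $L^1(0,T;L^\infty(\Omega))$ bound on $\nabla_x u$ furnished by Step 1. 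Once that estimate is in place, your Step 3 goes through. So your approach buys an independent identification of $\sigma(\hpsi)$ as an Oldroyd-B solution, at the cost of the full tensorial closure; the paper's approach is leaner because it leverages the energy inequality already encoded in the definition of generalised dissipative solution.
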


\section{Existence of generalised dissipative solutions}
\label{sec:Existence}

The main difficulty in proving the global existence of generalised dissipative solutions, as defined in Section~\ref{sec:GeneralisedSolutions}, is to derive the equation satisfied by a suitable approximate conformation tensor and guarantee the energy estimate~\eqref{eq:OBenergy}. This information is not furnished by the arguments presented in~\cite{BaSu18}. Here, instead, we shall therefore develop an entirely different approach, which we find more transparent. We shall construct approximate solutions to a truncated (both in the $q$-convective term, and in the configuration domain) problem via a Galerkin method, and then remove the truncations. We adapt to our case the two-step Galerkin approximation used in~\cite{BuMaSu2013} in the context of implicitly constituted kinetic models.

\subsection{Galerkin approximation of a truncated problem}
The Hilbert space $V\cap H^{d+1}(\Omega;\R^d)$, equipped with the inner product of $H^{d+1}(\Omega;\R^d)$, is compactly and densely embedded in $L^2_{\mathrm{div}}(\Omega;\R^d)$. Therefore, by a version of the Hilbert--Schmidt Theorem (cf.\ Lemma 5.1 in \cite{FigueroaSuli2012}), there exists a countable set $\set*{w_i}\subset V\cap H^{d+1}(\Omega;\R^d)$ of eigenfunctions whose linear span is dense in $L^2_{\mathrm{div}}(\Omega;\R^d)$. Furthermore, these vectors form an orthonormal set in $L^2(\Omega;\R^d)$ and an orthogonal set in $H^{d+1}(\Omega;\R^d)$.

Similarly, the Hilbert space $H^1_M(\Omega\times D)\cap H^1(\Omega\times D) = H^1(\Omega\times D)$ (equipped with the standard inner product) is compactly and densely embedded in $L^2_M(\Omega\times D)$. Thus, there exists a countable set $\set*{\phi_i}$ of eigenfunctions in $H^1(\Omega\times D)$ whose linear span is dense in $L^2_M(\Omega\times D)$, which are orthogonal in the inner product of $H^1(\Omega\times D)$ and orthonormal in $L^2_M(\Omega\times D)$.

\medskip
Let $R>0$, choose a smooth function $\chi:[0,\infty)\to [0,1]$ such that
\begin{equation}
	\chi(s) = 1\;\;\text{for}\;\; s \in [0,1],\quad \chi(s) = 0\;\;\text{for}\;\; s>2,
\end{equation}
and define $\chi_R:[0,\infty)\to [0,1]$ by
\begin{equation}
	\chi_R(s) := \chi\prt*{\frac{s}{R}}.
\end{equation}
Using the same cut-off function $\chi$, we define, for a fixed $L>0$, the truncations
\begin{equation}
\label{eq:truncationT}
	T_L(s):= \int_0^s \chi_L(r)\dx{r},\quad \Lambda_L(s) := s\chi_L(s).
\end{equation}

\medskip
We now fix $n,m\in\mathbb{N}$, and pose the following Galerkin problem: find time-dependent coefficients $c_i^{m,n,R,L}$, $d_i^{m,n,R,L}$ such that the functions $u^{m,n,R,L}$ and $\hpsi^{m,n,R,L}$, defined by
\begin{equation}
\begin{split}
	u^{m,n,R,L}(t,x) &:= \sum_{i=1}^m c_i^{m,n,R,L}(t)w_i(x),\\
	\hpsi^{m,n,R,L}(t,x,q) &:= \sum_{i=1}^n d_i^{m,n,R,L}(t)\phi_i(x,q),
\end{split}
\end{equation}
solve
\begin{equation}
\label{eq:NSgalerkin}
\begin{aligned}
	\intO \partial_t u^{m,n,R,L} \cdot w_i \dx{x} - \intO (u^{m,n,R,L}\otimes u^{m,n,R,L}) : \nabla_x w_i \dx{x} &+ \nu\intO \nabla_x u^{m,n,R,L} : \nabla_x w_i \dx{x}\\
	 &= -\intO \tau^{m,n,R,L} : \nabla_x w_i \dx{x},
\end{aligned}
\end{equation}
for all $i=1,\ldots, m$ and a.e.\ $t\in(0,T)$; and
\begin{equation}
\begin{split}
\label{eq:FPgalerkin}
	&\intOD  M\partial_t \hpsi^{m,n,R,L} \phi_i \dx{q}\dx{x} - \intOD Mu^{m,n,R,L}\hpsi^{m,n,R,L} \cdot \nabla_x\phi_i\dx{q}\dx{x}\\
	 &- \intOD M\Lambda_L(\hpsi^{m,n,R,L})\chi_R(|q|)\ \prt*{(\nabla_x u^{m,n,R,L})q} \cdot \nabla_q \phi_i\dx{q}\dx{x}\\
	  &+ \mu\intOD M\nabla_x\hpsi^{m,n,R,L} \cdot \nabla_x \phi_i\dx{q}\dx{x}
	 + \intOD M\nabla_q\hpsi^{m,n,R,L} \cdot \nabla_q \phi_i\dx{q}\dx{x} = 0,
\end{split}
\end{equation}
for all $i=1,\ldots, n$ and a.e.\ $t\in(0,T)$, where
\begin{equation}
\label{eq:truncatedTau}
	\tau^{m,n,R,L} := \int_D M\chi_R(|q|) \nabla_q\prt*{T_L(\hpsi^{m,n,R,L})}\otimes q\dx{q},
\end{equation}
with initial data given by
\begin{align}
	u^{m,n,R,L}(0,x) & = u_0^m(x) := \sum_{i=1}^m (u_0,w_i)_{L^2(\Omega)} w_i(x),\\
	\hpsi^{m,n,R,L}(0,x,q) & = \hpsi_0^{n,L}(x,q) := \sum_{i=1}^n (T_L(\hpsi_0),\phi_i)_{L^2_M(\Omega\times D)} \phi_i(x,q).
\end{align}
Carath\'{e}odory's Existence Theorem (cf., for example, Theorem 1.1 of Chapter 2 in \cite{MR0069338}) provides local-in-time existence of such $u^{m,n,R,L}$, $\hpsi^{m,n,R,L}$ for any fixed $m,n,R,L$. The uniform bounds derived below over the maximal time-interval of local existence, $[0,T_*)$, say, then suffice to extend these solutions to the entire time interval $[0,T]$. We shall therefore assume below that this extension has already taken place and will write $T$ instead of $T_*$ throughout the next section.

\subsection{Passing to the limit $n\to\infty$}
Multiplying the $i^{th}$ equation of~\eqref{eq:NSgalerkin} by $c_i^{m,n,R,L}$ and summing over $i=1,\ldots, m$, we deduce that $u^{m,n,R,L}$ satisfies
\begin{equation}
\label{eq:NSenergy2}
	\ddt \intO \frac12 |u^{m,n,R,L}|^2\dx{x} + \nu \intO |\nabla_x u^{m,n,R,L}|^2\dx{x} = -\intO \tau^{m,n,R,L} : \nabla_x u^{m,n,R,L}\dx{x}.
\end{equation}
Integrating by parts in~\eqref{eq:truncatedTau} we have
\begin{align}
	\tau^{m,n,R,L}(t,x) = -\int_D MT_L(\hpsi^{m,n,R,L})\chi_R \Id \dx{q} &+ \int_D M\chi_R\ T_L(\hpsi^{m,n,R,L})\, (q\otimes q) \dx{q}\\
	 &- \int_D MT_L(\hpsi^{m,n,R,L})\,(\nabla_q\chi_R\otimes q) \dx{q},
\end{align}
whereby we have that
\begin{equation}
	\abs*{\tau^{m,n,R,L}} \leq CL.
\end{equation}
Consequently, using~\eqref{eq:NSenergy2} and Young's inequality, we deduce that
\begin{equation}
	\intO \frac12 |u^{m,n,R,L}(t,x)|^2\dx{x} + \frac{\nu}{2} \intOT |\nabla_x u^{m,n,R,L}(s,x)|^2\dx{x}\dx{s} \leq C(u_0, L).
\end{equation}
It then follows that the coefficients $c_i^{m,n,R,L}$ satisfy
\begin{equation}
	\sup_{t \in (0,T)}\,\sup_{i=1,\ldots,m}\; \abs*{c_i^{m,n,R,L}(t)} + \abs*{\frac{\dd c_i^{m,n,R,L}}{\dd t}(t)} \leq C(u_0, m, L).
\end{equation}
Consequently, by observing the continuous embedding $H^{d+1}(\Omega)\hookrightarrow W^{1,\infty}(\Omega)$, we have that
\begin{equation}\label{eq:uboundmLR}
	\norm*{\nabla_x u^{m,n,R,L}}_{L^\infty((0,T)\times\Omega;\R^{d\times d})} \leq C(m,L).
\end{equation}

Similarly, multiplying the $i^{th}$ equation of~\eqref{eq:FPgalerkin} by $d_i^{m,n,R,L}$ and summing over $i=1,\ldots, n$ shows that $\hpsi^{m,n,R,L}$ satisfies~
\begin{equation}
\begin{split}
\label{eq:FPenergy2}
\ddt &\intOD\frac12 M(\hpsi^{m,n,R,L})^2\dx{q}\dx{x} + \mu \intOD M|\nabla_x\hpsi^{m,n,R,L}|^2\dx{q}\dx{x} + \intOD M|\nabla_q\hpsi^{m,n,R,L}|^2\dx{q}\dx{x}\\
	&=\intOD M \Lambda_L(\hpsi^{m,n,R,L}) \chi_R(|q|) \ \prt*{(\nabla_x u^{m,n,R,L}) q}\cdot\nabla_q\hpsi^{m,n,R,L}\dx{q}\dx{x}\\
	&\leq \prt*{\intOD M(\hpsi^{m,n,R,L})^2 \chi_R(|q|) \ |\nabla_x u^{m,n,R,L}|^2 |q|^2 \dx{q}\dx{x}}^{\frac12}\prt*{\intOD M|\nabla_q\hpsi^{m,n,R,L}|^2\dx{q}\dx{x}}^{\frac12}\\
	&\leq C(m,L,R)\intOD \frac12 M(\hpsi^{m,n,R,L})^2\dx{q}\dx{x} +  \frac12\intOD M|\nabla_q\hpsi^{m,n,R,L}|^2\dx{q}\dx{x},
\end{split}
\end{equation}
where we have used \eqref{eq:uboundmLR} in the transition to the last line.
Therefore, using Gronwall's Lemma, we obtain the bounds
\begin{equation}
	u^{m,n,R,L} \inb L^\infty(0,T;L^2(\Omega;\R^d)) \cap L^2(0,T;V),\;\;\text{uniformly in $n$},
\end{equation}
and
\begin{equation}
	\hpsi^{m,n,R,L} \inb L^\infty(0,T;L^2_M(\Omega\times D)) \cap L^2(0,T;H^1_M(\Omega\times D)),\;\;\text{uniformly in $n$}.
\end{equation}
Here and in the rest of the paper, for a normed linear space $X$ and a sequence $(a_k)_{k \in \mathbb{N}} \subset X$, the notation $a_k \inb X$ signifies that $(a_k)_{k \in \mathbb{N}}$ is a bounded sequence in the norm of $X$. Using these bounds together with \eqref{eq:uboundmLR}, we then have that
\begin{align}
	\intOD M &\Lambda_L(\hpsi^{m,n,R,L})\chi_R(|q|) \ \prt*{(\nabla_xu^{m,n,R,L})q}\cdot \nabla_q \phi_i \dx{q}\dx{x}\\
	 &\leq C(m,L,R)\norm{\hpsi^{m,n,R,L}}_{L^2_M(\Omega\times D)}\norm{\nabla_q\phi_i}_{L^2_M(\Omega\times D;\R^d)},
\end{align}
which we then use to deduce the following $n$-uniform bound on the time derivative:
\begin{equation}
	\partial_t\hpsi^{m,n,R,L} \inb L^2(0,T;M^{-1}[H_M^1(\Omega\times D)]'),\;\;  \text{uniformly in $n$}.
\end{equation}
Furthermore, for each function $\xi\in L^2_M(\OD)$ we have
\begin{align}
	\norm{M\xi}_{[H_M^1(\Omega\times D)]'} &\leq \sup_{\phi\in H^1_M(\OD)}\frac{\abs*{\skp*{M\xi,\phi}_{\prt*{[H^1_M(\Omega \times D)]',H^1_M(\Omega \times D)}}}}{\norm{\phi}_{H_M^1(\Omega\times D)}}\\
	&\leq \sup_{\phi\in H^1_M(\OD)} \frac{\norm{M^{\frac12}\xi}_{L^2(\Omega\times D)}\norm{M^{\frac12}\phi}_{L^2(\Omega\times D)}}{\norm{\phi}_{L^2_M(\Omega\times D)}}\\
	&\leq \norm{\xi}_{L^2_M(\OD)}.
\end{align}
Therefore $L^2_M(\OD)$ embeds continuously into $M^{-1}[H^1_M(\OD)]'$. By the Aubin--Lions Lemma, with the triple of spaces $H^1_M(\OD) \hookrightarrow\hookrightarrow L^2_M(\OD)\hookrightarrow M^{-1}[H^1_M(\OD)]'$, we thus have that
\begin{equation}
\text{the sequence}\;\; (\hpsi^{m,n,R,L})_{n\in\mathbb{N}}\;\;\text{is precompact in}\;\; L^2(0,T;L^2_M(\OD)).
\end{equation}

\medskip
We can therefore deduce the existence of functions $c_i^{m,R,L}, \hpsi^{m,R,L}$ such that, along a suitable subsequence (which we shall never relabel),
	\begin{alignat}{2}
		c_i^{m,n,R,L} &\rightarrow c_i^{m,R,L}\;\; &&\text{weakly$^*$ in $W^{1,\infty}(0,T)$,} \\
		c_i^{m,n,R,L} &\rightarrow c_i^{m,R,L} &&\text{strongly in $C([0,T])$,} \\
		u^{m,n,R,L} &\rightarrow u^{m,R,L} &&\text{strongly in $C([0,T];V\cap H^{d+1}(\Omega;\R^d))$,}\\
		\hpsi^{m,n,R,L} &\rightarrow \hpsi^{m,R,L} &&\text{strongly in $L^2(0,T;L^2_M(\Omega\times D))$,} \\
		\hpsi^{m,n,R,L} &\rightharpoonup \hpsi^{m,R,L}\;\; &&\text{weakly in $L^2(0,T;H_M^1(\Omega\times D))$,}\\
		M\partial_t\hpsi^{m,n,R,L}&\rightharpoonup M\partial_t\hpsi^{m,R,L}\;\; &&\text{weakly in $L^2(0,T;[H_M^1(\Omega\times D)]')$,}
	\end{alignat}
where
\begin{equation}
	u^{m,R,L}(t,x):=\sum_{i=1}^m c_i^{m,R,L}(t)w_i(x).  
\end{equation} 
Let us define
\begin{equation}
	\tau^{m,R,L}(t,x) := \int_D M\chi_R(|q|)\nabla_q\prt*{T_L(\hpsi^{m,R,L})}\otimes q \dx{q}.
\end{equation}
Passing to the limit $n \rightarrow \infty$ in equations~\eqref{eq:NSgalerkin} and~\eqref{eq:FPgalerkin}, it is easy to verify that the functions $u^{m,R,L}$, $\hpsi^{m,R,L}$ satisfy
\begin{equation}
\begin{split}
\label{eq:NSgalerkin2}
	\intO \partial_t u^{m,R,L}\cdot w_i\dx{x} - \intO (u^{m,R,L}\otimes u^{m,R,L}) : \nabla_x w_i\dx{x} &+ \nu\intO \nabla_x u^{m,R,L} : \nabla_x w_i\dx{x} \\
	&= -\intO \tau^{m,R,L} : \nabla_x w_i\dx{x},
\end{split}
\end{equation}
for all $i=1,\ldots, m$ and a.e.\ $t\in(0,T)$; and
\begin{equation}
\begin{split}
\label{eq:FPgalerkin2}
	&\hspace{-5mm}\langle M \partial_t\hpsi^{m,R,L}, \phi \rangle_{([H^1_M(\Omega \times D)]',H^1_M(\Omega \times D))}
	\\
	 &=\intOD M\hpsi^{m,R,L} u^{m,R,L}\cdot \nabla_x\phi\dx{q}\dx{x}
	+ \intOD M\Lambda_L(\hpsi^{m,R,L})\chi_R\ \prt*{(\nabla_x u^{m,R,L})q} \cdot \nabla_q \phi\dx{q}\dx{x} \\
	&\;\; - \mu\intOD M\nabla_x\hpsi^{m,R,L}\cdot\nabla_x\phi\dx{q}\dx{x}
	- \intOD M\nabla_q\hpsi^{m,R,L}\cdot\nabla_q\phi\dx{q}\dx{x}\\
	&\quad \forall\, \phi\in H_M^1(\Omega\times D)\; \text{and a.e.\ $t\in(0,T)$}.
\end{split}	
\end{equation}
Furthermore,
\begin{equation}
\label{eq:FPWeakData}
	\intOD M\hpsi^{m,L,R}(0,x,q)\phi\dx q\dx x = \intOD MT_L(\hpsi_0(x,q))\phi\dx q\dx x\qquad \forall\, \phi \in L^2_M(\Omega \times D).
\end{equation}

Thanks to Theorem II.5.12 on p.99 of \cite{MR2986590}, 
\[ \langle M \partial_t\hpsi^{m,R,L}, \phi \rangle_{([H^1_M(\Omega \times D)]',H^1_M(\Omega \times D))} = \frac{\dd}{\dd t} \intOD M\hpsi^{m,R,L} \phi \dx{q}\dx{x}\quad \forall\, \phi \in H_M^1(\Omega \times D).\] 
Hence, by testing the Fokker--Planck equation~\eqref{eq:FPgalerkin2} with the function $\phi\equiv 1$ and integrating in time over $(0,t)$, we have that
\begin{equation}
\begin{aligned}\label{hpsibd11}
	\intOD M\hpsi^{m,R,L} (t) \dx{q}\dx{x} = \intOD M\hpsi^{m,R,L}(0)\dx{q}\dx{x} &= \intOD MT_L(\hpsi_0)\dx{q}\dx{x}\\
	 &\leq \intOD M\hpsi_0\dx{q}\dx{x} = |\Omega|. 
\end{aligned}	
\end{equation}
Testing~\eqref{eq:FPgalerkin2} with $\phi = \prt{\hpsi^{m,R,L}}_{-}:=\min{\prt{0,\hpsi^{m,R,L}}}$ and invoking again Theorem II.5.12 on p.99 of \cite{MR2986590}, we deduce that
\begin{equation}
\begin{aligned}
	\frac12 \ddt \intOD M\prt{\hpsi^{m,R,L}}_{-}^2 & \dx{q}\dx{x} + \mu\intOD M|\nabla_x\prt{\hpsi^{m,R,L}}_{-}|^2\dx{q}\dx{x} + \intOD M|\nabla_q\prt{\hpsi^{m,R,L}}_{-}|^2 \dx{q}\dx{x} \\
	&= \intOD M\chi_R \Lambda_L(\hpsi^{m,R,L}) ((\nabla_x u^{m,R,L}) q) \cdot \nabla_q \prt{\hpsi^{m,R,L}}_{-} \dx{q}\dx{x} \\
	&\leq \frac{1}{2} \intOD M|\nabla_q (\hpsi^{m,R,L})_{-}|^2\dx{q}\dx{x} + c(m,R,L) \intOD M\prt{\hpsi^{m,R,L}}_{-}^2\dx{q}\dx{x},
\end{aligned}
\end{equation}
whence
\begin{equation}
	\intOD M\prt{\hpsi^{m,R,L}}_{-}^2(t) \dx{q}\dx{x} \leq \mathrm{e}^{c(m,R,L)t} \intOD M\prt{\hpsi^{m,R,L}}_{-}^2(0)\dx{q}\dx{x} = 0,
\end{equation}
and so
\begin{equation}\label{hpsibd2}
	\hpsi^{m,R,L} \geq 0\;\;\text{a.e. on $(0,T) \times \Omega \times D$. \ }
\end{equation}
Finally, testing the Fokker--Planck equation with $\phi(x,q)=\phi(x)$, we get
\begin{align}\label{hpsibd12}
\begin{aligned}
	\left\langle \partial_t\prt*{\int_D M\hpsi^{m,R,L}\dx{q}} ,  \phi \right\rangle_{([H^1(\Omega)]',H^1(\Omega))} =& \intO \prt*{\int_D M\hpsi^{m,R,L}\dx{q}} u^{m,R,L}\cdot \nabla_x\phi \dx{x}\\
	 &\hspace{-1.8cm}- \mu\intO \nabla_x\prt*{\int_D M\hpsi\dx{q}}\cdot\nabla_x\phi \dx{x}\quad \forall\, \phi \in H^1(\Omega).
\end{aligned}
\end{align}
Thus, the \textit{polymer number density}, defined by 
\begin{equation}
\label{eq:MacroDensity1}
	\rho^{m,R,L}(t,x) := \int_D M(q)\hpsi^{m,R,L}(t,x,q)\dx{q},
\end{equation}
is a weak solution of the advection-diffusion equation
\begin{align}
\label{eq:EqnMacroDensity1}
	\partial_t \rho^{m,R,L} + \div_x\prt*{u^{m,R,L} \rho^{m,R,L}} - \mu\Delta_x\rho^{m,R,L} &= 0 \quad \mbox{on $(0,T) \times \Omega$},
\end{align}
subject to the boundary and initial conditions
\begin{align}
    \mu\nabla_x\rho^{m,R,L}\cdot \hat n& = 0\quad \text{on $(0,T)\times \partial\Omega$,}\\
    \rho^{m,R,L}(0,x) &= \int_D M(q) T_L(\hpsi_0(x,q))\dx{q}.
\end{align}
It follows from \eqref{hpsibd11} and \eqref{hpsibd2} on the one hand, and on the other by setting $\phi = \rho^{m,R,L}$ in \eqref{hpsibd12}, using that $\mathrm{div}_x  u^{m,R,L}=0$, and applying again Theorem II.5.12 on p.99 of \cite{MR2986590}, that
\begin{align}
	\sup_{t\in(0,T)}\norm*{\rho^{m,R,L}(t)}_{L^\infty(\Omega)} \leq \norm*{\rho^{m,R,L}(0)}_{L^\infty(\Omega)} \leq \esssup_{x\in\Omega} \int_D M\hpsi_0\dx{q} = 1,\\
	\int_0^T \norm*{\nabla_x\rho^{m,R,L}(t)}_{L^2(\Omega;\R^d)}^2 \dx{t} \leq \norm*{\rho^{m,R,L}(0)}_{L^2(\Omega)}^2 \leq |\Omega|.
\end{align}

\subsection{An entropy estimate}
Choose $\delta > 0$ and define, for $s \in [0,\infty)$, 
\begin{align}
	\mathcal{F}_\delta(s) := (s+\delta)\ln{\prt{s+\delta}} - (s+\delta) + 1
\quad \mbox{and} \quad
	T_{L,\delta}(s):= \int_0^s \frac{\Lambda_L(r)}{r+\delta} \dx{r} = \int_0^s \frac{r\chi_L(r)}{r+\delta} \dx{r}.
\end{align}
The function $\mathcal{F}_{\delta}'(\hpsi^{m,R,L}) = \ln{\prt{\hpsi^{m,R,L}+\delta}}$ is a valid test function for the Fokker--Planck equation. We then obtain
\begin{equation}
\begin{split}
\label{eq:FPdelta}
	\ddt \intOD M \mathcal{F}_\delta(\hpsi^{m,R,L})\dx{q}\dx{x} &+ \mu\intOD M\frac{1}{\hpsi^{m,R,L}+\delta}|\nabla_x\hpsi^{m,R,L}|^2 \dx{q}\dx{x}\\
	 & + \intOD M\frac{1}{\hpsi^{m,R,L}+\delta}|\nabla_q\hpsi^{m,R,L}|^2 \dx{q}\dx{x}\\
	 &\hspace{-4.5cm} = \intOD M\chi_R \Lambda_L(\hpsi^{m,R,L})\frac{1}{\hpsi^{m,R,L}+\delta} ((\nabla_x u^{m,R,L})q)\cdot \nabla_q \hpsi^{m,R,L} \dx{q}\dx{x}\\
	 &\hspace{-4.5cm} = \intOD M\chi_R((\nabla_x u^{m,R,L})q) \cdot \nabla_q\prt*{ T_{L,\delta}(\hpsi^{m,R,L})} \dx{q}\dx{x}\\
	 &\hspace{-4.5cm} = \intOD M\chi_R T_{L,\delta}(\hpsi^{m,R,L})\, (q\otimes q) : \nabla_x u^{m,R,L}\dx{q}\dx{x}\\
	 &- \intOD MT_{L,\delta}(\hpsi^{m,R,L})\,(\nabla_q\chi_R \otimes q) : \nabla_x u^{m,R,L}\dx{q}\dx{x}.
\end{split}
\end{equation}
Using the Monotone Convergence Theorem for the gradient terms on the left-hand side and the convergence $T_{L,\delta}\to T_L$ in $C([0,T])$ for the terms on the right-hand side of~\eqref{eq:FPdelta}, we deduce that, in the limit $\delta\to 0_+$,
\begin{equation}
\begin{split}
\label{eq:FPdelta2}
	\intOD M \mathcal{F}(\hpsi^{m,R,L})(t)\dx{q}\dx{x} &+ 4 \mu\int_0^t \intOD M\abs*{\nabla_x\sqrt{\hpsi^{m,R,L}}}^2\dx{q}\dx{x}\dx{s}\\
	 &+ 4 \int_0^t \intOD M\abs*{\nabla_q\sqrt{\hpsi^{m,R,L}}}^2\dx{q}\dx{x}\dx{s}\\
	 &\hspace{-4.5cm}= \intOD M \mathcal{F}(\hpsi^{m,R,L})(0)\dx{q}\dx{x} + \int_0^t \intOD M\chi_R T_{L}(\hpsi^{m,R,L}) \,(q\otimes q) : \nabla_x u^{m,R,L}\dx{q}\dx{x}\dx{s}\\
	 &- \int_0^t\intOD MT_{L}(\hpsi^{m,R,L})\, (\nabla_q\chi_R \otimes q) : \nabla_x u^{m,R,L}\dx{q}\dx{x}\dx{s}.
\end{split}
\end{equation}
Now, we multiply equation~\eqref{eq:NSgalerkin2} with $c_i^{m,R,L}$, sum over $i=1,\ldots,m$, and integrate by parts to deduce that
\begin{equation}
\begin{split}
\label{eq:NSdelta}
	\frac12\ddt \intO |u^{m,R,L}|^2\dx{x} &+ \nu\intO |\nabla_x^{m,R,L}|^2\dx{x} = -\intO \tau^{m,R,L} : \nabla_x u^{m,R,L}\dx{x}\\
	=& -\intOD M\chi_R T_L(\hpsi^{m,R,L})\, (q\otimes q) : \nabla_x u^{m,R,L}\dx{q}\dx{x}\\
	 &+ \intOD MT_L(\hpsi^{m,R,L})\,(\nabla_q\chi_R\otimes q) : \nabla_x u^{m,R,L}\dx{q}\dx{x}.
\end{split}
\end{equation}
Adding equations~\eqref{eq:FPdelta2} and~\eqref{eq:NSdelta}, we deduce the following entropy identity, valid for a.e.\ $t\in(0,T)$,
\begin{equation}
\begin{split}
\label{eq:GalerkinEntropy}
	\frac12\intO |u^{m,R,L}(t)|^2 \dx{x} &+ \intOD M\mathcal{F}(\hpsi^{m,R,L})(t)\dx{q}\dx{x} + \nu\int_0^t\intO |\nabla_xu^{m,R,L}|^2\dx{x}\dx{s}\\
	 &\hspace{-3cm}+ 4\mu\int_0^t \intOD M\abs*{\nabla_x\sqrt{\hpsi^{m,R,L}}}^2\dx{q}\dx{x}\dx{s} + 4 \int_0^t \intOD M\abs*{\nabla_q\sqrt{\hpsi^{m,R,L}}}^2\dx{q}\dx{x}\dx{s}\\
	 &= \frac12 \intO |u_0^m|^2\dx{x} + \intOD M \mathcal{F}(T_L(\hpsi_0)) \dx{q}\dx{x}.
\end{split}
\end{equation}
Notice that
\begin{align}
	\intOD M\mathcal{F}(T_L(\hpsi_0)) \dx{q}\dx{x} &= \int_{\{T_L(\hpsi_0)\leq e\}} M\mathcal{F}(T_L(\hpsi_0)) \dx{q}\dx{x} + \int_{\{T_L(\hpsi_0) > e\}}M\mathcal{F}(T_L(\hpsi_0)) \dx{q}\dx{x}\\
	&\leq \intOD M\dx{q}\dx{x} + \intOD M\mathcal{F}(\hpsi_0)\dx{q}\dx{x} \leq C.
\end{align}
Thus, the bounds implied by the entropy identity~\eqref{eq:GalerkinEntropy} are uniform in $m,R,L$.

One important consequence of the $L\log L$ bound on $\hpsi$ guaranteed by the entropy identity~\eqref{eq:GalerkinEntropy} is that it implies integrability of the second moment:
\begin{equation}
	\sup_{t\in(0,T)}\ \intOD M \hpsi^{m,R,L} |q|^2 \dx{q}\dx{x} \leq C,
\end{equation}
where the constant $C$ is independent of $m,R,L$. Indeed, the Fenchel--Young inequality (applied to the function $f(s)=\mathrm{e}^s$ and its convex conjugate $f^*(s)=(s\ln s)-s$ for $s \in [0,\infty)$) yields
\begin{equation}
	\hpsi^{m,R,L} \left(\frac14 |q|^2\right)  \leq \prt*{\hpsi^{m,R,L}\ln\hpsi^{m,R,L}} - \hpsi^{m,R,L} + \mathrm{e}^{\frac14|q|^2},
\end{equation}
which gives upon multiplication by $4M$ and integration over $\Omega \times D$ (recall that $M$ is a Gaussian and note that $f^*(s) \leq \mathcal{F}(s)$ for all $s \in [0,\infty)$)
\begin{equation}
\label{eq:FYbound}
	\intOD M \hpsi^{m,R,L}\,|q|^2 \dx{q}\dx{x} \leq 4\intOD M\mathcal{F}(\hpsi^{m,R,L})\dx{q}\dx{x} + \frac{4|\Omega|}{\mathcal{Z}}\int_D \mathrm{e}^{-\frac14 |q|^2}\dx{q}.
\end{equation} 

\subsection{Passing to the limit $L\to\infty$}
The entropy identity~\eqref{eq:GalerkinEntropy} implies that, in particular,
\begin{equation}
	\sup_{t\in(0,T)}\norm{u^{m,R,L}(t)}_{L^2(\Omega;\R^d)} + \int_0^T \norm{\nabla_xu^{m,R,L}(t)}^2_{L^2(\Omega;\R^{d\times d})}\dx{t} \leq C,
\end{equation}
which in turn implies that
\begin{equation}
	\sup_{t\in(0,T)}\sup_{i=1,\ldots,m}\; \abs*{c_i^{m,R,L}(t)} + \abs*{\frac{\dd c_i^{m,R,L}}{\dd t}(t)} \leq C(m).
\end{equation}
It follows that there exist functions $c_i^{m,R}$, $i=1,\ldots,m$, such that (along a subsequence again)
	\begin{alignat}{2}
		c_i^{m,R,L} &\rightarrow c_i^{m,R}\;\; &&\text{weakly$^*$ in $W^{1,\infty}(0,T)$,} \\
		c_i^{m,R,L} &\rightarrow c_i^{m,R} &&\text{strongly in $C([0,T])$,} \\
		u^{m,R,L} &\rightarrow u^{m,R} &&\text{strongly in $C([0,T];V\cap H^{d+1}(\Omega;\mathbb{R}^d))$,}
	\end{alignat}
where
\begin{equation}
	u^{m,R}(t,x):=\sum_{i=1}^m c_i^{m,R}(t)w_i(x).
\end{equation}

Next, we derive strong compactness of the sequence $\hpsi^{m,L,R}$ in $L^1(0,T; L^1_M(\Omega \times D))$. The following arguments are virtually the same as those in~\cite{BuMaSu2013}. From the $L$-uniform estimate
\begin{equation}
	\sup_{t\in(0,T)}\;\intOD M\hpsi^{m,R,L}\prt*{1+\ln{\hpsi^{m,R,L}}} \dx{q}\dx{x} \leq C,
\end{equation}
we infer that the sequence $(\hpsi^{m,R,L})_{L>0}$ is equi-integrable, and therefore there exists a function $\hpsi^{m,R}$ such that
\begin{equation}
	\hpsi^{m,R,L} \rightharpoonup \hpsi^{m,R}\;\;\text{weakly in $L^1(0,T;L^1_M(\OD))$}.
\end{equation}
We wish to `upgrade' this convergence to convergence in norm. To this end, let $\Omega_0\times D_0\subset\subset \OD$ be a Lipschitz domain. Since $M$ is bounded away from zero on $D_0$, we deduce that
\begin{equation}
\label{eq:LocalSobolevSqrt}
	\sup_{t\in(0,T)} \norm*{\sqrt{\hpsi^{m,R,L}}}_{L^2(\Omega_0\times D_0)}^2 + \int_0^T \norm*{\nabla_{x,q}\sqrt{\hpsi^{m,R,L}}}_{L^2(\Omega_0\times D_0;\R^{2d})}^2\dx{q}\dx{x}\dx{t} \leq C.
\end{equation}
Then, by function-space interpolation,
\begin{equation}
\label{eq:LocalSobolev}
	\norm*{\hpsi^{m,R,L}}_{L^{\frac{d+1}{d}}((0,T)\times\Omega_0\times  D_0)} \leq C \quad\text{and}\quad \norm*{\nabla_{x,q}\hpsi^{m,R,L}}_{L^{\frac{2d+2}{2d+1}}((0,T)\times\Omega_0\times  D_0;\R^{2d})} \leq C.
\end{equation}
Moreover, since
\begin{equation}
	\norm{\hpsi^{m,R,L}}_{L^\infty((0,T)\times \Omega;L^1_M(D))} \leq C,
\end{equation}
we see that, for each $q_1\in (1,\infty)$, there exists a $q_2>1$ such that
\begin{equation}
	\norm*{\hpsi^{m,R,L}}_{L^{q_1}((0,T)\times \Omega_0 ;L^{q_2}(D))} \leq C.
\end{equation}
Consequently, there exists a $\delta>0$ such that
\begin{equation}
	\norm*{u^{m,R,L}\hpsi^{m,R,L}}_{L^{1+\delta}((0,T)\times\Omega_0\times D_0;\R^d)} + \norm*{\Lambda_L(\hpsi^{m,R,L})(\nabla_xu^{m,R,L})q}_{L^{1+\delta}((0,T)\times\Omega_0\times D_0;\R^d)}\leq C.
\end{equation}
Now choose any $\alpha\in (0,\min{\prt{\frac12,\frac{\delta}{\delta+1}}})$ and define the following $(1+2d)$-component vector-functions:
\begin{align}
	H^{L} &:= \prt*{M\hpsi^{m,R,L}, M\hpsi^{m,R,L}u^{m,R,L}+M\nabla_x\hpsi^{m,R,L}, M\chi_R\Lambda_L(\hpsi^{m,R,L})\nabla_xu^{m,R,L}q + M\nabla_q\hpsi^{m,R,L}},\\
	Q^L &:= \prt*{\prt*{1+\hpsi^{m,R,L}}^\alpha, 0,\ldots,0}.
\end{align}
Using the above uniform bounds we can deduce that the sequences of these vector-functions converge weakly, respectively in $L^{1+\delta}((0,T)\times\Omega_0\times D_0)$ and $L^{\frac{1}{\alpha}}((0,T)\times\Omega_0\times D_0)$ to
\begin{align}
	H &:= \prt*{M\hpsi^{m,R}, M\hpsi^{m,R}u^{m,R}+M\nabla_x\hpsi^{m,R}, M\chi_R\hpsi^{m,R}\nabla_xu^{m,R}q + M\nabla_q\hpsi^{m,R}}\quad \mbox{and}\\
	Q &:= \prt*{\zeta^{m,R}, 0,\ldots,0}
\end{align}
as $L \rightarrow \infty$, where $\zeta^{m,R}$ denotes the weak limit of $\prt{1+\hpsi^{m,R,L}}^\alpha$.
From the Fokker--Planck equation~\eqref{eq:FPgalerkin2} it is clear that
\begin{equation}
	\div_{(t,x,q)} H^L = 0 \;\;\text{in $(0,T)\times \Omega_0\times D_0$},
\end{equation}
while the bound
\begin{align}
	\int_0^T\int_{\Omega_0\times D_0} |\nabla_{t,x,q}Q^L-(\nabla_{t,x,q}^{\mathrm T}Q^L)|^2 \dx{q}\dx{x}\dx{t} &\leq C \int_0^T\int_{\Omega_0\times D_0} |\nabla_{x,q}(1+\hpsi^{m,R,L})^\alpha|^2\dx{q}\dx{x}\dx{t}\\
	&\leq C \int_0^T\int_{\Omega_0\times D_0} \abs*{\nabla_{x,q}\sqrt{\hpsi^{m,R,L}}}^2\dx{q}\dx{x}\dx{t} \leq C
\end{align}
implies that the curl of $Q^L$ is precompact in $H^{-1}((0,T)\times\Omega_0\times D_0)$. We deduce by the Div-Curl Lemma that
\begin{equation}
	H^L \cdot Q^L \rightharpoonup H \cdot Q\;\; \text{weakly in $L^1((0,T)\times\Omega_0\times D_0)$},
\end{equation}
and in particular
\begin{equation}
	\hpsi^{m,R,L}(1+\hpsi^{m,R,L})^\alpha \rightharpoonup \hpsi^{m,R}\zeta^{m,R}
\end{equation}
as $L \rightarrow \infty$, which implies that
\begin{equation}
	(1+\hpsi^{m,R,L})^{\alpha+1} \rightharpoonup (1+\hpsi^{m,R})\zeta^{m,R}\quad \mbox{as $L \rightarrow \infty$}.
\end{equation}
Since the function $s\mapsto s^{\alpha+1}$ is strictly convex on $[1,\infty)$, we have
\begin{equation}
	(1+\hpsi^{m,R})^{\alpha+1} \leq (1+\hpsi^{m,R})\zeta^{m,R},\quad \text{and so}\quad (1+\hpsi^{m,R})^\alpha \leq \zeta^{m,R}.
\end{equation}
On the other hand, the function $s\mapsto s^\alpha$ is strictly concave on $[1,\infty)$, so we must have
\begin{equation}
\zeta^{m,R} := \overline{(1+\hpsi^{m,R,L})^\alpha} \leq (\overline{1+\hpsi^{m,R,L}})^\alpha = (1+\hpsi^{m,R})^\alpha,
\end{equation}
and therefore
\begin{equation}
  (1+\overline{\hpsi^{m,R,L}})^\alpha  = (1+\hpsi^{m,R})^\alpha = \zeta^{m,R} = \overline{(1+\hpsi^{m,R,L})^\alpha}.
\end{equation}
Then, by strict concavity of the function $s\mapsto s^\alpha$ on $[1,\infty)$, we deduce that, along a subsequence,
\begin{equation}
\label{eq:PointwiseAE}
	\hpsi^{m,R,L} \to \hpsi^{m,R}\;\; \text{a.e.\ in $(0,T)\times\Omega_0\times D_0$}\quad \mbox{as $L \rightarrow \infty$}.
\end{equation}
By a diagonal argument based on an increasing sequence of nested subsets of $\Omega \times D$, we can then replace $(0,T) \times \Omega_0\times D_0$ in \eqref{eq:PointwiseAE} by $(0,T) \times \Omega\times D$. Combining the resulting almost everywhere convergence $\hpsi^{m,R,L} \to \hpsi^{m,R}$ on $(0,T)\times\Omega_0\times D_0$ as $L \rightarrow \infty$ with the weak $L^1(0,T;L^1_M(\OD))$ convergence, and using Vitali's Theorem, we deduce that
\begin{equation}
	\hpsi^{m,R,L} \to \hpsi^{m,R} \;\; \text{strongly in $L^1(0,T;L^1_M(\OD))$}\quad \mbox{as $L \rightarrow \infty$}.
\end{equation}
As the sequence $\hpsi^{m,R,L}$ is uniformly bounded in $L^\infty(0,T;L^1_M(\Omega \times D))$ with respect to $m$, $R$ and $L$, by standard function-space interpolation we then have that
\begin{equation}\label{eq:strongLq}
	\hpsi^{m,R,L} \to \hpsi^{m,R} \;\; \text{strongly in $L^q(0,T;L^1_M(\OD))$}\quad \mbox{as $L \rightarrow \infty$,} \quad \text{for each $q\in[1,\infty)$}.
\end{equation}

Since $T_L$ is a Lipschitz function, it now follows by the Dominated Convergence Theorem that
\begin{equation}
	T_L(\hpsi^{m,R,L}) \to \hpsi^{m,R}\;\; \text{strongly in $L^1(0,T;L^1_M(\OD))$}\quad \mbox{as $L \rightarrow \infty$}.
\end{equation}
Hence, there is a subsequence such that for a.e.\ $t\in(0,T)$ and for all $i=1,\ldots, m$ we have
\begin{equation}
	\intO \tau^{m,R,L} : \nabla_x w_i \dx{x} \to \intO \tau^{m,R} : \nabla_x w_i \dx{x}\quad \mbox{as $L \rightarrow \infty$},
\end{equation}
where
\begin{align}
	\tau^{m,R} := \int_D M\chi_R \hpsi^{m,R}\, (q\otimes q)\dx{q} - \int_D M\hpsi^{m,R}\, (\nabla_q\chi_R\otimes q) \dx{q} - \int_D M\hpsi^{m,R}\chi_R \Id \dx{q}.
\end{align}
Therefore, we can pass to the limit $L\to\infty$ in the approximate Navier--Stokes equation~\eqref{eq:NSgalerkin2} to obtain
\begin{equation}
\begin{split}
\label{eq:NSgalerkin3}
	\intO \partial_t u^{m,R}\cdot w_i\dx{x} - \intO (u^{m,R}\otimes u^{m,R}) : \nabla_x w_i\dx{x} &+ \intO \nabla_x u^{m,R} : \nabla_x w_i\dx{x} \\
	&= -\intO \tau^{m,R} : \nabla_x w_i\dx{x},
\end{split}
\end{equation}
for all $i=1,\ldots, m$ and a.e.\ $t\in(0,T)$.

Next, we shall pass to the limit $L \rightarrow \infty$ in the Fokker--Planck equation~\eqref{eq:FPgalerkin2}; this, however, requires particular care. First, let us prove that
\begin{equation}
\label{eq:weakgradient1}
	M\nabla_{x,q} \hpsi^{m,R,L} \rightharpoonup M\nabla_{x,q}\hpsi^{m,R} \quad \text{weakly in $L^1((0,T)\times\Omega;L^1(D;\R^{2d}))$}
\quad \mbox{as $L \rightarrow \infty$}.
\end{equation}
To this end, we note that
\begin{equation}
\begin{aligned}
\label{eq:UniformL1gradient}
	\intOD M \abs*{\nabla_{x,q}\hpsi^{m,R,L}}\dx{q}\dx{x} &= 2\intOD M\sqrt{\hpsi^{m,R,L}}\ \abs*{\nabla_{x,q}\sqrt{\hpsi^{m,R,L}}}\dx{q}\dx{x}\\
	&\leq 2\prt*{\intOD M\abs*{\nabla_{x,q}\sqrt{\hpsi^{m,R,L}}}^2\dx{q}\dx{x}}^{1/2}\prt*{\intOD M\hpsi^{m,R,L}\dx{q}\dx{x}}^{1/2},
\end{aligned}
\end{equation}
whereby, upon integrating this inequality over $(0,T)$ and using the entropy bound, we deduce that $M\nabla_{x,q}\hpsi^{m,R,L}$ is uniformly bounded in $L^1((0,T) \times \OD;\R^{2d})$. The above inequality also shows that equi-integrability of this sequence follows from that of $\hpsi^{m,R,L}$. Recalling the local bound~\eqref{eq:LocalSobolev}, we can identify the weak limit. This then implies the desired weak convergence property stated in \eqref{eq:weakgradient1}.

Second, thanks to strong compactness of the velocity gradient, the strong convergence result \eqref{eq:strongLq}, and properties of the truncation $\Lambda_L$, we easily see that, as $L \rightarrow \infty$,
\begin{equation}
\label{eq:weakqdiffusion1}
	(\nabla_x u^{m,R,L})\, \Lambda_L(\hpsi^{m,R,L}) \to (\nabla_x u^{m,R})\, \hpsi^{m,R} \;\; \text{strongly in $L^1((0,T)\times \Omega ; L^1_M(D;\R^{d\times d}))$}.
\end{equation}

Third, we deduce a uniform bound on the time derivative of $\hpsi^{m,R,L}$. Using~\eqref{eq:UniformL1gradient} we see that $\nabla_{x,q}\hpsi^{m,R,L}$ is uniformly bounded in $L^2(0,T;L^1(\OD;\R^{2d}))$. Using this bound, the uniform control of the velocity in $L^\infty(0,T;L^2(\Omega;\R^d))\cap L^2(0,T;V)$ and the polymer number density in $L^\infty((0,T)\times\R^d)$, together with the continuous embedding $H^s(\OD)\hookrightarrow W^{1,\infty}(\OD)$, for $s>d+1$, one readily arrives at the following bound on the time derivative:
\begin{equation}
\label{eq:weaktime1}
	\| M\partial_t\hpsi^{m,R,L}\|_{L^2(0,T; [H^s(\Omega \times D)]')}  \leq C, \quad s>d+1,
\end{equation}
uniformly in $m$, $R$ and $L$. We then claim that
\begin{equation}
\label{eq:WeakInTime}
	\hpsi^{m,R,L} \inb C_{\mathrm{weak}}([0,T];L^1_M(\OD))\quad\text{uniformly in $L$}.
\end{equation}
This follows from the above bound on the time derivative in conjunction with the entropy bound
\begin{equation}
	\mathcal{F}(\hpsi^{m,R,L})\inb L^\infty(0,T;L^1_M(\OD)).
\end{equation}
To see this, we note that the Maxwellian-weighted $L\log L$ space on $\OD$ has a separable predual (a certain separable Orlicz space), into which $L^\infty(\OD)$, and therefore also $H^s(\OD)$ for $s>d+1$, is continuously embedded. We can then apply Lemma~3.1(b) from~\cite{BaSu2016Compressible} to deduce~\eqref{eq:WeakInTime}. We refer the reader to the proof of \cite[Theorem~4.1]{BaSu2016Compressible} for the full details.

Using properties~\eqref{eq:weakgradient1}, \eqref{eq:weakqdiffusion1} and~\eqref{eq:WeakInTime}, we can integrate equation~\eqref{eq:FPgalerkin2} in time on the interval $[0,t]$ and pass to the limit $L \rightarrow \infty$ to deduce that, for all $t\in[0,T]$,
\begin{equation}
\begin{split}
\label{eq:FPgalerkin3}
	-\int_0^t\intOD  M \hpsi^{m,R} \partial_t\phi \dx{q}\dx{x}\dx s
	& = \int_0^t\intOD M\hpsi^{m,R} u^{m,R}\cdot \nabla_x\phi\dx{q}\dx{x}\dx s\\
	&\;\;+ \int_0^t\intOD M\hpsi^{m,R}\chi_R(|q|)\ \prt*{(\nabla_x u^{m,R})q} \cdot \nabla_q \phi\dx{q}\dx{x}\dx s \\
	&\;\; - \mu\int_0^t\intOD M\nabla_x\hpsi^{m,R}\cdot\nabla_x\phi\dx{q}\dx{x}\dx s\\
	&\;\; - \int_0^t\intOD M\nabla_q\hpsi^{m,R}\cdot\nabla_q\phi\dx{q}\dx{x}\dx s\\
	&\;\; + \intOD  M \hpsi_0\,\phi(0) \dx{q}\dx{x} - \intOD  M \hpsi^{m,R}(t)\,\phi(t) \dx{q}\dx{x}\\
	&\quad \forall\, \phi\in W^{1,1}(0,T;C_c^\infty(\overline\Omega\times D)).
\end{split}	
\end{equation}

Finally, we would like to pass to the limit in the entropy identity~\eqref{eq:GalerkinEntropy}. To this end, we need to establish the following weak convergence result: 
\begin{equation}
\label{eq:Entropy_dissip_convergence}
	\sqrt{M}\nabla_{x,q}\sqrt{\hpsi^{m,R,L}} \rightharpoonup \sqrt{M}\nabla_{x,q}\sqrt{\hpsi^{m,R}}\;\; \text{weakly in $L^2(0,T;L^2(\OD;\R^{2d}))$}\quad \mbox{as $L \rightarrow \infty$}.	
\end{equation}

From the entropy inequality~\eqref{eq:GalerkinEntropy}, we know that the family  $(\sqrt{M}\nabla_{x,q}\sqrt{\hpsi^{m,R,L}})_{L>0}$ is uniformly bounded in $L^2(0,T;L^2(\OD;\R^{2d}))$. We can therefore pass to a subsequence which converges weakly to some limit, say $S
\in L^2(0,T;L^2(\Omega \times D;\mathbb{R}^{2d}))$. That is,
\begin{equation}
\label{eq:IntermediateWeak1}
    \sqrt{M}\nabla_{x,q}\sqrt{\hpsi^{m,R,L}} \rightharpoonup S\;\;\; \text{weakly in $L^2(0,T;L^2(\OD;\R^{2d}))$}\quad \mbox{as $L \rightarrow \infty$}.
\end{equation}
To identify the weak limit $S$ it then suffices to show that 
\begin{equation}
\label{eq:DissipationTermIdentification}
    \int_0^T\intOD S \cdot\phi \dx q\dx x\dx t = \int_0^T\intOD \sqrt{M}\nabla_{x,q}\sqrt{\hpsi^{m,R}} \cdot\phi \dx q\dx x\dx t,
\end{equation}
for all $\phi\in L^2(0,T;  C_c^\infty(\Omega\times D;\R^{2d}))$. Clearly, by partial integration and noting the strong convergence of $ \sqrt{M \hpsi^{m,R,L}}$ to $ \sqrt{M \hpsi^{m,R}}$ in $L^2(0,T;L^2(\Omega \times D))$ implied by the strong convergence
of $\hpsi^{m,R,L}$ to $\hpsi^{m,R}$ in $L^1(0,T;L^1_M(\Omega \times D))$ (cf. \eqref{eq:strongLq}), we have that
\begin{align*}
\int_0^T\intOD \sqrt{M}\nabla_{x,q}\sqrt{\hpsi^{m,R,L}} \cdot\phi \dx q\dx x\dx t &= - 
\int_0^T\intOD \sqrt{\hpsi^{m,R,L}} \,\nabla_{x,q} \cdot (\sqrt{M} \phi) \dx q\dx x\dx t\\
&= - 
\int_0^T\intOD \sqrt{M \hpsi^{m,R,L}} \,\frac{1}{\sqrt{M}}\nabla_{x,q} \cdot (\sqrt{M} \phi) \dx q\dx x\dx t\\
&\rightarrow -\int_0^T\intOD \sqrt{M \hpsi^{m,R}} \,\frac{1}{\sqrt{M}}\nabla_{x,q} \cdot (\sqrt{M} \phi) \dx q\dx x\dx t\\
&= - 
\int_0^T\intOD \sqrt{\hpsi^{m,R}} \,\nabla_{x,q} \cdot (\sqrt{M} \phi) \dx q\dx x\dx t
\end{align*}
as $L \rightarrow \infty$, for all $\phi\in L^2(0,T; C^\infty_c(\Omega\times D;\R^{2d}))$. This then implies that $S =  \sqrt{M}\nabla_{x,q}\sqrt{\hpsi^{m,R}}$ in $L^2(0,T;\mathcal{D}'(\Omega \times D;\mathbb{R}^{2d}))$, and because $S \in L^2(0,T;L^2(\Omega \times D;\mathbb{R}^{2d}))$ also  $S =  \sqrt{M}\nabla_{x,q}\sqrt{\hpsi^{m,R}}$ in $L^2(0,T;L^2(\Omega \times D;\mathbb{R}^{2d}))$. We thus deduce the desired weak convergence result \eqref{eq:Entropy_dissip_convergence}.

Now, using strong convergence of the velocity, \eqref{eq:Entropy_dissip_convergence} and weak lower semi-continuity of norms (for the gradient terms), Fatou's Lemma (for the entropy term), and the Dominated Convergence Theorem (for the last term)\footnote{The integrand converges a.e.\ and is dominated by the integrable function\\ $M(1\chi_{\{T_L(\hpsi_0)<1\}} + \mathcal{F}(T_L(\hpsi_0))\chi_{\{T_L(\hpsi_0)\geq 1\}}) \leq M(1+\mathcal{F}(\psi_0)$).}, we can pass to the limit in~\eqref{eq:GalerkinEntropy} to obtain
\begin{equation}
\begin{split}
\label{eq:GalerkinEntropy2}
	\frac12\intO |u^{m,R}(t)|^2 \dx{x} &+ \intOD M\mathcal{F}(\hpsi^{m,R})(t)\dx{q}\dx{x} + \nu\int_0^t\intO |\nabla_xu^{m,R}|^2\dx{x}\dx{s}\\
	 &\hspace{-2.5cm}+ 4\mu\int_0^t \intOD M\abs*{\nabla_x\sqrt{\hpsi^{m,R}}}^2\dx{q}\dx{x}\dx{s} + 4 \int_0^t \intOD M\abs*{\nabla_q\sqrt{\hpsi^{m,R}}}^2\dx{q}\dx{x}\dx{s}\\
	 & \leq  \frac12 \intO |u_0^m|^2\dx{x} + \intOD M \mathcal{F}(\hpsi_0) \dx{q}\dx{x}.
\end{split}
\end{equation}

Passing to the limit $L \rightarrow \infty$ in~\eqref{eq:MacroDensity1} and~\eqref{eq:EqnMacroDensity1} we obtain the following initial-boundary-value problem, satisfied in the weak sense:
\begin{align}
\label{eq:EqnMacroDensity2}
	\partial_t \rho^{m,R} + \div_x\prt*{u^{m,R} \rho^{m,R}} - \mu\Delta_x\rho^{m,R} &= 0\quad \text{in $(0,T)\times\Omega$,}\\
    \mu\nabla_x\rho^{m,R}\cdot \hat n &= 0\quad \text{on $(0,T)\times\partial\Omega$,}\\
	\rho^{m,R}(0,x) = \int_D M(q) \hpsi_0(x,q)\dx{q} &= 1\quad \text{for a.e.\ $x\in\Omega$,}
\end{align}
where
\begin{equation}
\label{eq:MacroDensity2}
	\rho^{m,R}(t,x) = \int_D M(q)\hpsi^{m,R}(t,x,q)\dx{q}.
\end{equation}

\subsection{Macroscopic closure}
Having removed the truncation in $\hpsi$ (which was adapted to obtain a cancellation in the $L\log L$ entropy bound above), we are now ready to derive an energy equality for the conformation tensor. Unfortunately, we can no longer test the Fokker--Planck equation~\eqref{eq:FPgalerkin3} with the function $\phi = |q|^2$ (or more generally $\phi(x,q) = (q\otimes q) : \varphi(x)$), as it is not an admissible test function. Therefore, we use instead a sequence of truncated test functions 
\begin{equation}
	\phi_{R'}(x,q) =  T_{R'}(|q|^2)\eta(t), 
\end{equation}
where the truncation operator $T$ is defined in~\eqref{eq:truncationT} and $\eta\in C_c^1([0,T))$ satisfies $\eta(0)=1$, $\partial_t\eta\leq 0$. In particular, we have
\begin{equation}
	\nabla_q T_{R'}(|q|^2) = 2\chi_{R'}(|q|^2)q.
\end{equation}
Since this test function is independent of $x$, there are only two terms on the right-hand side of~\eqref{eq:FPgalerkin3} to look at in detail. 
For the convective term, we write
\begin{equation}
\label{eq:q-convectionR'}
\begin{split}
	\int_0^T\eta\intOD &M\hpsi^{m,R}\chi_R\ \prt*{(\nabla_x u^{m,R})q} \cdot \nabla_q T_{R'}(|q|^2)\dx{q}\dx{x}\dx t\\ 
	&= 2\intOTf \eta \prt*{\int_D M\hpsi^{m,R}\chi_R\ \chi_{R'}(|q|^2)\, q\otimes q\dx{q}} : \nabla_x u^{m,R}\dx{x}\dx t,
\end{split}
\end{equation}
while for the diffusion term we have, after integrating by parts,
\begin{equation}
\label{eq:q-diffusionR'}
\begin{split}
	-2\int_0^T\eta\intOD M\nabla_q\hpsi^{m,R}\cdot q \chi_{R'}(|q|^2)\dx{q}\dx{x}\dx t &\\
        &\hspace{-25mm}= -2\int_0^T\eta\intOD M\hpsi^{m,R}|q|^2 \chi_{R'}(|q|^2) \dx{q}\dx{x}\dx t\\
	&\hspace{-20mm}+2d\int_0^T\eta\intOD M\hpsi^{m,R}\chi_{R'}(|q|^2)\dx{q}\dx{x}\dx t\\
	&\hspace{-20mm}+2\int_0^T\eta\intOD M\hpsi^{m,R}q\cdot \nabla_q\chi_{R'}(|q|^2) \dx{q}\dx{x}\dx t.
\end{split}
\end{equation}
Notice that 
\begin{equation}
	\left| \intOD M\hpsi^{m,R}q\cdot \nabla_q\chi_{R'}(|q|^2) \dx{q}\dx{x} \right| = \left| 2\intOD M\hpsi^{m,R}|q|^2\ \chi_{R'}'(|q|^2) \dx{q}\dx{x}\right| \lesssim \frac{1}{R'}, 
\end{equation}
and therefore, for each fixed $\eta$ as above, the absolute value of the last term of~\eqref{eq:q-diffusionR'} is also $\lesssim 1/{R'}$. 

In each of the other terms we can pass to the limit $R'\to\infty$ using the Monotone Convergence Theorem. Therefore, we deduce that
\begin{equation}
\begin{aligned}
	-\int_0^T\!\!\!\partial_t\eta\intO &\prt*{\int_D M\hpsi^{m,R}|q|^2\dx q}\dx x\dx t + 2\intOTf\!\!\!\eta\brk*{\prt*{\int_D M\hpsi^{m,R}|q|^2\dx{q}} - d\prt*{\int_DM\hpsi^{m,R}}\dx{q}}\dx{x}\dx{t}\\[0.5em]
	 &= \intO \prt*{\int_D M\hpsi_0|q|^2\dx{q}}\dx{x} + 2\int_0^T\eta\intOD M\hpsi^{m,R}\chi_R(|q|)\ (q\otimes q) : \nabla_x u^{m,R} \dx{q} \dx{x}\dx{t}. 
\end{aligned}
\end{equation}
Next, we test the Navier--Stokes equation~\eqref{eq:NSgalerkin3} with $c_i^{m,R}\eta$ and sum over $i=1,\ldots,m$ to deduce that
\begin{equation}
\begin{split}
\label{eq:NSenergyMacro}
	&\frac12\ddt \intO \eta|u^{m,R}|^2\dx{x} - \frac12\intO \partial_t\eta |u^{m,R}|^2\dx x + \nu\intO \eta|\nabla_xu^{m,R}|^2\dx{x} \\
        &\hspace{20mm} = -\intO \eta\tau^{m,R} : \nabla_x u^{m,R}\dx{x}\\
	&\hspace{20mm} = -\intOD \eta M \hpsi^{m,R}\chi_R(|q|) \ (q\otimes q) : \nabla_x u^{m,R}\dx{q}\dx{x}\\
&\hspace{25mm} + \intOD \eta M\hpsi^{m,R}(\nabla_q\chi_R\otimes q) : \nabla_x u^{m,R}\dx{q}\dx{x}.
\end{split}
\end{equation}
Integrating  \eqref{eq:NSenergyMacro} in time, multiplying by 2, summing the last two equalities, and then dividing the resulting equality by 2, we end up with
\begin{equation}
\label{eq:ConformationEnergy}
\begin{split}
	-&\frac12\int_0^T\partial_t\eta\intO \prt*{|u^{m,R}(t)|^2 + \sigma^{m,R}(t)}\dx{x}\dx{t} + \nu\intOTf \eta |\nabla_xu^{m,R}|^2\dx{x}\dx{t} \\
	 &= - \intOTf \eta\tr{(\sigma^{m,R}-\Id)}\dx{x}\dx{t} + \frac12\intO|u^{m}(0)|^2\dx{x} + \frac12\intO \tr{\sigma}(\hpsi_0)\dx{x} + \mathcal{I}^{m,R},
\end{split}
\end{equation}
where we define the approximate conformation tensor
\begin{equation}
\label{eq:conformationMR}
	\sigma^{m,R}(t,x) := \int_D M\hpsi^{m,R}(q\otimes q)\dx{q},
\end{equation}
and the `error term' $\mathcal{I}^{m,R}$ is defined by 
\[\mathcal{I}^{m,R} := \int_0^T\eta\intOD M\hpsi^{m,R} (\nabla_q\chi_R\otimes q) : \nabla_x u^{m,R}\dx{q}\dx{x}\dx{t}.\]
Now, $\mathcal{I}^{m,R}$ can be bounded, independently of $m$, as follows:
\begin{equation}
\begin{split}
	|\mathcal{I}^{m,R}| &\leq \norm{\nabla_xu^{m,R}}_{L^2((0,T)\times\Omega)}\prt*{\int_0^T\!\!\intO\abs*{\int_D M\hpsi^{m,R}|q|\abs*{\nabla_q\chi_{R}}\dx{q}}^2\dx{x}\dx{t}}^{1/2}\\
	 &\lesssim \brk*{\int_0^T\!\!\intO\prt*{\int_D M\hpsi^{m,R}|q|^2\dx{q}}\prt*{\int_D M\hpsi^{m,R}\abs*{\nabla_q\chi_{R}}^2\dx{q}}\dx{x}\dx{t}}^{1/2}\\
	 &\lesssim \frac{1}{R}.
\end{split}
\end{equation}

\subsection{The limit $R\to\infty$}
Passage to the limit $R \to \infty$ can be performed similarly to the passage to the limit $L\to\infty$ before. The main point is that all of the estimates deduced from the entropy inequality~\eqref{eq:GalerkinEntropy2} are independent of $R$. 
In particular, we can guarantee the existence of functions $c_i^{m}$ and $u^m$ such that, as $R \to \infty$, 
	\begin{alignat}{2}
		c_i^{m,R} &\rightarrow c_i^{m}\;\; &&\text{weakly$^*$ in $W^{1,\infty}(0,T)$,} \\
		c_i^{m,R} &\rightarrow c_i^{m} &&\text{strongly in $C([0,T])$,} \\
		u^{m,R} &\rightarrow u^{m} &&\text{strongly in $C([0,T];V\cap H^{d+1}(\Omega;\R^d))$,}
	\end{alignat}
where
\begin{equation}
	u^{m}(t,x):=\sum_{i=1}^m c_i^{m}(t)w_i(x).
\end{equation}
Similarly, there exists a function  $\hpsi^{m}\inb C_{\mathrm{weak}}([0,T];L^1_M(\OD))$ such that, as $R \to \infty$, 
	\begin{alignat}{2}
		\hpsi^{m,R} &\rightarrow \hpsi^{m}\;\; &&\text{strongly in $L^q(0,T;L^1_M(\OD))$,\;\; $q\in[1,\infty)$,} \\
		 M\nabla_{x,q} \hpsi^{m,R} &\rightharpoonup M\nabla_{x,q}\hpsi^{m}\;\; && \text{weakly in $L^1((0,T)\times\Omega;L^1(D;\R^{2d}))$},\\
		M\partial_t \hpsi^{m,R} &\rightharpoonup M\partial_t\hpsi^{m}\;\; && \text{weakly in $L^2(0,T;[H^s(\OD)]')$}.
	\end{alignat}
Extracting a subsequence, we can assume that the first of the above convergences holds almost everywhere in time. These are then sufficient to pass to the limit in the $q$-convective term in the 
Fokker--Planck equation:\ if $\phi\in W^{1,1}(0,T;C_c^\infty(\overline\Omega \times D))$ is a test function with $\supp\phi\subset [0,T]\times\overline\Omega\times B_{r}(0)$, then
\begin{equation}
\begin{aligned}
	&\abs*{\int_0^t\intOD \brk*{M\chi_R\hpsi^{m,R} ((\nabla_xu^{m,R})q)\cdot\nabla_q\phi - M\hpsi^{m}((\nabla_xu^m) q)\cdot\nabla_q\phi} \dx{q}\dx{x}\dx{s}}\\
	&\qquad\qquad\leq \int_0^t\intOD M\chi_R\abs*{\hpsi^{m,R}-\hpsi^m}\abs*{\nabla_xu^{m,R}}|q||\nabla_q\phi|\dx{q}\dx{x}\dx{s}\\
	&\qquad\qquad\quad+ \int_0^t\intOD M\hpsi^{m}\prt*{1-\chi_R}\abs*{\nabla_xu^{m,R}}|q||\nabla_q\phi|\dx{q}\dx{x}\dx{s}\\
	&\qquad\qquad\quad+\int_0^t\intOD M\hpsi^m|q|\abs*{\nabla_xu^{m,R}-\nabla_xu^m}|\nabla_q\phi|\dx{q}\dx{x}\dx{s}\\
	&\qquad\qquad\leq r\norm{\nabla_xu^{m,R}}_{L^\infty(0,T;L^\infty(\Omega;\R^{d\times d}))}\norm{\nabla_q\phi}_{L^2(0,T;L^\infty(\Omega \times D))}\norm*{\hpsi^{m,R}-\hpsi^m}_{L^2(0,T;L^1_M(\OD))}\\
	&\qquad\qquad\quad + r\norm{\nabla_xu^{m,R}}_{L^\infty(0,T;L^\infty(\Omega;\R^{d\times d}))}\int_0^t\intOD M\hpsi^{m}\prt*{1-\chi_R}|\nabla_q\phi|\dx{q}\dx{x}\dx{s}\\
	&\qquad\qquad\quad + r\norm{\nabla_q\phi}_{L^2(0,T;L^\infty(\Omega \times D))}\norm{\hpsi^{m}}_{L^2(0,T;L^1_M(\OD))}\norm{\nabla_xu^{m,R}-\nabla_xu^{m}}_{L^\infty(0,T;H^d(\Omega;\R^d))}.
\end{aligned}
\end{equation}
From the strong convergence of $\hpsi^{m,R}$, the Monotone Convergence Theorem, and strong convergence of $u^{m,R}$, respectively, we see that each of the terms on the right-hand side of the last inequality vanishes as $R\to\infty$. Consequently, passing to the limit $R\to\infty$ we obtain from~\eqref{eq:FPgalerkin3} the following equation:
\begin{equation}
\begin{split}
\label{eq:FPgalerkin4}
	-\int_0^t\!\!\intOD  M \hpsi^{m} \partial_t\phi \dx{q}\dx{x}\dx s
	& = \intOD M\hpsi_0\phi(0)\dx q\dx x - \intOD M\hpsi^m(t)\phi(t)\dx q\dx x\\
	&\hspace{-35mm}+\int_0^t\intOD M\hpsi^{m} u^{m}\cdot \nabla_x\phi\dx{q}\dx{x}\dx s
	+ \int_0^t\intOD M\hpsi^{m} \prt*{(\nabla_x u^{m})q} \cdot \nabla_q \phi\dx{q}\dx{x}\dx s \\
	&\hspace{-35mm} - \mu\int_0^t\intOD M\nabla_x\hpsi^{m}\cdot\nabla_x\phi\dx{q}\dx{x}\dx{s}
	- \int_0^t\intOD M\nabla_q\hpsi^{m}\cdot\nabla_q\phi\dx{q}\dx{x}\dx{s}\\
	&\quad \forall t\in[0,T]\quad\forall\, \phi\in W^{1,1}(0,T; C_c^\infty(\overline\Omega\times D)).
\end{split}	
\end{equation}

However, passing to the limit $R \to \infty$ in the extra stress tensor term $\tau^{m,R}$ appearing in equation~\eqref{eq:NSgalerkin3} requires some care. 
Recall that
\begin{equation}
\label{eq:TauMR}
\tau^{m,R} = \int_D M\chi_R \hpsi^{m,R}(q\otimes q)\dx{q} - \int_D M\hpsi^{m,R}(\nabla_q\chi_R\otimes q) \dx{q} - \int_D M\hpsi^{m,R}\chi_R \Id \dx{q}.
\end{equation}
It is easy to see that, as $R \to \infty$,
\begin{equation}
	\int_D M\hpsi^{m,R}\chi_R \Id \dx{q} \to \int_D M\hpsi^{m} \Id \dx{q}\;\; \text{strongly in $L^1(\Omega;\R^{d\times d})$ for a.e.\ $t\in(0,T)$},
\end{equation}
and
\begin{equation}
	\int_D M\hpsi^{m,R} (\nabla_q\chi_R\otimes q) \dx{q} \to 0\;\; \text{strongly in $L^1(\Omega;\R^{d\times d})$ for a.e.\ $t\in(0,T)$}.
\end{equation}
However, since we only have an $L^\infty(0,T; L^1(\Omega\times D;\mathbb{R}^{d \times d}))$ bound on the conformation tensor $\sigma^{m,R}$ defined in~\eqref{eq:conformationMR}, i.e., 
\begin{equation}
	\sup_{t\in(0,T)}\ \intO \abs*{\int_D M \hpsi^{m,R}\ (q\otimes q)\dx{q}}\dx{x} \leq C,
\end{equation}
we cannot pass to the weak limit in $\tau^{m,R}$ as $R \to \infty$.
To overcome this problem we embed the space $L^\infty(0,T;L^1(\Omega;\R^{d\times d}_{sym}))$ into $L^\infty(0,T;\mathcal{M}(\overline\Omega;\R^{d\times d}_{sym}))$. Then, by the Banach--Alaoglu Theorem, there exists a time-parameterised positive semidefinite measure $\mu_{\sigma}^m \in L^\infty(0,T;\mathcal{M}(\overline\Omega;\R^{d\times d}_{sym}))$ such that, as $R \to \infty$,
\begin{equation}
\sigma^{m,R} \rightharpoonup^* \mu_{\sigma}^m\;\; \text{weakly$^*$ in $L^\infty(0,T;\mathcal{M}(\overline\Omega;\R^{d\times d}_{sym})$)}
\end{equation}
(i.e., in duality with $L^1(0,T; C(\overline\Omega;\R^{d\times d}_{sym}))$.

Now consider the localised approximate conformation tensor
\begin{equation}
	\bar\sigma^{m,R}(t,x) := \int_D M(q)\hpsi^{m,R}(t,x,q)\chi_R(|q|)\,(q\otimes q) \dx q.
\end{equation}
Similarly as above, there exists a positive semidefinite measure $\bar\mu_{\sigma}^m \in  L^\infty(0,T;\mathcal{M}(\overline\Omega;\R^{d\times d}_{sym}))$ such that, as $R \to \infty$, 
\begin{equation}
	\bar\sigma^{m,R} \rightharpoonup^* \bar\mu_{\sigma}^m\;\; \text{weakly$^*$ in $L^\infty(0,T;\mathcal{M}(\overline\Omega;\R^{d\times d}_{sym})$)}.
\end{equation}
Thus, as $R \to \infty$,
\begin{equation}
	\int_0^t\intO \tau^{m,R}(s) : \nabla_x w_i \dx{x}\dx{s} \to \int_0^t\skp*{\bar\mu_{\sigma}^m(s),\nabla_x w_i}\dx{s}\quad \forall\, i \in \{1,\ldots, m\},\;\; \text{a.e. }\; t\in(0,T).
\end{equation}
Therefore, passing to the limit $R\to\infty$ in~\eqref{eq:NSgalerkin3}, we obtain
\begin{equation}
\begin{split}
\label{eq:NSgalerkin4}
	\int_0^t\intO \partial_t u^{m}\cdot w_i\dx{x}\dx s - \int_0^t\intO (u^{m}\otimes u^{m}) : \nabla_x w_i\dx{x}\dx s &+ \nu\int_0^t\intO \nabla_x u^{m} : \nabla_x w_i\dx{x}\dx{s}\\
	&= -\int_0^t\skp*{\bar\mu_{\sigma}^m(s),\nabla_x w_i}\dx{s},
\end{split}
\end{equation}
for all $i=1,\ldots, m$ and a.e.\ $t\in(0,T)$.

We can also pass to the limit $R \rightarrow \infty$ in~\eqref{eq:GalerkinEntropy2} to deduce that
\begin{equation}
\begin{split}
\label{eq:GalerkinEntropy3}
	\frac12\intO |u^{m}(t)|^2 \dx{x} &+ \intOD M\mathcal{F}(\hpsi^{m})(t)\dx{q}\dx{x} + \nu\int_0^t\intO |\nabla_xu^{m}|^2\dx{x}\dx{s}\\
	 &\hspace{-2.5cm}+ 4\mu\int_0^t \intOD M\abs*{\nabla_x\sqrt{\hpsi^{m}}}^2\dx{q}\dx{x}\dx{s} + 4 \int_0^t \intOD M\abs*{\nabla_q\sqrt{\hpsi^{m}}}^2\dx{q}\dx{x}\dx{s}\\
	 & \leq  \frac12 \intO |u_0^m|^2\dx{x} + \intOD M \mathcal{F}(\hpsi_0) \dx{q}\dx{x}.
\end{split}
\end{equation}

Finally, we consider the energy identity~\eqref{eq:ConformationEnergy} for the conformation tensor $\sigma^{m,R}$. Denoting by $\mu_{\tr\sigma}^m$, the trace of the symmetric positive semidefinite matrix-valued measure $\mu_{\sigma}^m$, we can pass to the limit $R\to\infty$ to obtain 
\begin{equation}
\label{eq:ConformationEnergy2}
\begin{split}
	-&\frac12\int_0^T\partial_t\eta\prt*{\intO |u^{m}(t)|^2\dx{x} +\skp*{\mu^m_{\tr\sigma}(t),\mathbbm{1}_{\overline\Omega}}}\dx t + \nu\intOTf \eta |\nabla_xu^{m}|^2\dx{x}\dx{t}\\
	 &= - \int_0^T \eta\brk*{\skp*{\mu_{\tr\sigma}^m(t),\mathbbm{1}_{\overline\Omega}} - d|\Omega|} \dx{t} +\frac12\intO|u^{m}(0)|^2\dx{x} + \frac12\intO \tr{\sigma(\hpsi_0)}\dx{x},
\end{split}
\end{equation}
for each $\eta\in C_c^1([0,T))$ with $\eta(0)=1$ and $\partial_t\eta\leq 0$.

Let us now define
\begin{equation}
	\sigma(\hpsi^m)(t,x) := \int_D M(q)\hpsi^m(t,x,q)\, (q\otimes q) \dx{q}.
\end{equation}
We then have that $\sigma(\hpsi^m) \inb L^\infty(0,T;L^1(\Omega;\R^{d\times d}))$. By identifying this function with a symmetric positive semidefinite matrix-valued measure in $L^\infty(0,T;\mathcal{M}(\overline\Omega;\R^{d\times d}_{sym}))$, we can consider the parametrised defect measures
\begin{equation}
	\bar\mu_{\sigma}^m(t) - \sigma(\hpsi)(t) \quad\text{and}\quad \mu_{\tr\sigma}^m(t) - \tr\sigma(\hpsi^m)(t).
\end{equation}
We will now leverage the energy inequality to deduce additional information about those measures. 
First, we show that the defect measure $\bar{\mu}_\sigma^m - \sigma(\hpsi^m)$ is positive semidefinite, i.e.,
\begin{equation}
\label{eq:PositivityM}
	\skp*{\bar{\mu}_\sigma^m(t) - \sigma(\hpsi^m)(t),\xi\otimes\xi} \geq 0\quad \forall\, \xi\in\R^d\quad \text{for a.e. $t \in (0,T)$}.
\end{equation}
Let $R'>0$ and $\eta\in L^1(0,T)$ be fixed.
Since $\bar\sigma^{m,R} \geq 0$ in the sense of symmetric positive semidefinite matrices, we have
\begin{equation}
	\int_0^T\!\!\!\eta(s)\intO \xi^{\mathrm{T}} \bar\sigma^{m,R} \xi \dx{x}\dx s \geq \int_0^T\!\!\!\eta(s)\intO \xi^{\mathrm{T}}  \prt*{\int_{B_{R'}(0)} M\hpsi^{m,R}\chi_R(|q|)\, (q\otimes q)\dx{q}} \xi \dx{x}\dx s\quad \forall\, \xi \in \mathbb{R}^d.
\end{equation}
We can pass to the limit $R\to\infty$ in this inequality, using the strong convergence of $\hpsi^{m,R}$ to $\hpsi^m$ on the right-hand side, to deduce that 
\begin{equation}
\label{eq:PositivityMIntermediate}
	\int_0^T\eta(s)\skp*{\bar{\mu}_\sigma^m(s),\xi\otimes\xi}\dx s \geq \int_0^T\eta(s)\intO \xi^{\mathrm{T}}  \prt*{\int_{B_{R'}(0)} M\hpsi^{m}\,(q\otimes q)\dx{q}} \xi \dx{x} \dx s\quad \forall\, \xi \in \mathbb{R}^d.
\end{equation}
By the Monotone Convergence Theorem, we can pass to the limit $R'\to\infty$ to obtain
\begin{equation}
 \int_0^T\eta(s)\skp*{\bar{\mu}_{\sigma}^m(s)-\sigma(\hpsi^m)(s),\xi\otimes\xi}\dx s \geq 0 \quad \forall\, \xi \in \mathbb{R}^d.
\end{equation}
Finally, for $\delta>0$, we choose $\eta = \frac{1}{\delta}\mathbbm{1}_{[t,t+\delta]}$ and use Lebesgue's Differentiation Theorem to pass to the limit $\delta\to 0_+$ to deduce~\eqref{eq:PositivityM}.

Similarly, for $R'<R$ and the same choice of $\eta$ as above, we observe
\begin{align}
	&\int_0^T \eta \intO \prt*{\bar\sigma^{m,R} - \int_{B_{R'}(0)} M\hpsi^{m,R}\chi_R(q\otimes q)\dx q} : \nabla_x\vartheta \dx x \dx s\\
	 &\quad = \int_0^T \eta \intO \prt*{\int_{\set{|q|>R'}} M\hpsi^{m,R}\chi_R(q\otimes q) \dx q} : \nabla_x\vartheta \dx x \dx s\\
	 &\quad \leq C\int_0^T \eta \intO |\nabla_x\vartheta| \prt*{\int_{\set{|q|>R'}} M\hpsi^{m,R}|q|^2 \dx q} \dx x \dx s\\
	 &\quad \leq C\norm{\vartheta}_{C^1(\overline\Omega;\R^d)}\int_0^T\eta\intO \prt*{\tr\sigma^{m,R} - \int_{B_{R'}(0)} M\hpsi^{m,R}|q|^2\dx q} \dx x \dx s.
\end{align}
Passing to the limit $R\to \infty$ we obtain
\begin{align}
	\int_0^T & \eta\, \skp*{\bar\mu_\sigma^m, \nabla_x\vartheta} \dx s - \int_0^T\eta\intO\prt*{\int_{B_{R'}(0)} M\hpsi^m(q\otimes q)\dx q} : \nabla_x\vartheta \dx x \dx s\\
	&\leq C\norm{\vartheta}_{C^1(\overline\Omega;\R^d)} \int_0^T \eta \prt*{\skp*{\tr\mu_\sigma^m, \mathbbm{1}_{\overline\Omega}} - \intO\int_{B_{R'}(0)} M\hpsi^m|q|^2\dx q} \dx x \dx s.
\end{align}
Then, passing to the limit $R'\to\infty$ and letting $\delta \rightarrow 0_+$, we deduce that, for a.e.\ $t\in (0,T)$,
\begin{equation}
	\label{eq:CompatibilityM}
		\abs*{\skp*{\bar{\mu}_{\sigma}^m(t)-\sigma(\hpsi^m)(t), \nabla_x\vartheta}} \leq \norm{\vartheta}_{C^1(\overline\Omega;\R^d)}\skp*{\mu_{\tr\sigma}^m(t)-\tr\sigma(\hpsi^m)(t),\mathbbm{1}_{\overline\Omega}}\quad \forall\, \vartheta \in C^1(\overline\Omega;\R^d).
\end{equation}

Next, we consider the energy identity~\eqref{eq:ConformationEnergy2}. Choosing $t\in(0,T)$, $\delta>0$, and a sequence of smooth nonnegative test functions $\eta$ approximating the continuous nonnegative piecewise linear function
\begin{equation}
	\eta_{t,\delta}(s):= \begin{cases}
	1 & \text{for\;} s\in[0,t),\\
	\frac{(t+\delta)-s}{\delta} & \text{for\;} s\in  [t,t+\delta),\\
	0 & \text{for\;} s\in (t+\delta,T],
	\end{cases}
\end{equation}
we can pass to the limit in turn with the approximation of $\eta_{t,\delta}$, and, using Lebesgue's Differentiation Theorem, with $\delta\to0_+$. We thus obtain, for a.e.\ $t\in(0,T)$, the energy inequality
\begin{equation}
\label{eq:ConformationEnergy2M}
\begin{split}
	\frac12\intO |u^m(t)|^2\dx{x} + \frac12 \skp*{\mu^m_{\tr\sigma}(t),\mathbbm{1}_{\overline\Omega}} &+ \nu\intOT |\nabla_xu^m|^2\dx{x}\dx{s} + \int_0^t \brk*{\skp*{\mu^m_{\tr\sigma}(s),\mathbbm{1}_{\overline\Omega}} - d|\Omega|} \dx{s}\\
	 &\leq \frac12\intO|u_0|^2\dx{x} + \frac12\intO \tr{\sigma(\hpsi_0)}\dx{x}.
\end{split}
\end{equation}

Testing the Fokker--Planck equation~\eqref{eq:FPgalerkin4} with the function $\phi(q) = T_R(|q|^2)$ (with the same truncation as before, defined in~\eqref{eq:truncationT}), we obtain
\begin{align}
	\intOD & M\hpsi_0 T_R(|q|^2)\dx{q}\dx{x} - \intOD M\hpsi^m(t) T_R(|q|^2)\dx{q}\dx{x} \\ 
	&= - \intOT\int_D M\hpsi^m \prt{(\nabla_x u^m) q} \cdot \nabla_q T_R(|q|^2)\dx{q}\dx{x}\dx{s}
	+ \intOT\int_D M\nabla_q\hpsi^m\cdot\nabla_qT_R(|q|^2)\dx{q}\dx{x}\dx{s}.
\end{align}
The first term on the right-hand side can be rewritten as
\begin{equation}
	- \intOT\int_D M\hpsi^m \prt{(\nabla_x u^m) q} \cdot \nabla_q T_R(|q|^2)\dx{q}\dx{x}\dx{s} = -2\intOT \prt*{\int_D M\hpsi^m \chi_{R}(|q|^2)\,(q\otimes q)\dx{q}} : \nabla_x u\dx{x}\dx{s},
\end{equation}
while the other term can be written as follows:
\begin{equation}
\begin{split}
	2\intOT\int_D M\nabla_q\hpsi^m\cdot q \chi_{R}(|q|^2)\dx{q}\dx{x}\dx{s} & = 2\intOT\int_D M\hpsi^m |q|^2 \chi_{R}(|q|^2) \dx{q}\dx{x}\dx{s}\\
	&\quad -2d\intOT\int_D M\hpsi^m\chi_{R}(|q|^2)\dx{q}\dx{x}\dx{s}\\
	&\quad -2\intOT\int_D M\hpsi^m q\cdot \nabla_q\chi_{R}(|q|^2) \dx{q}\dx{x}\dx{s}.
\end{split}
\end{equation}
As before, the last term can be bounded as follows:
\begin{equation}
	\intOT\int_D M\hpsi^m q\cdot \nabla_q\chi_{R}(|q|^2) \dx{q}\dx{x}\dx{s} = 2\intOT\int_D M\hpsi^m |q|^2\,\chi_{R}'(|q|^2) \dx{q}\dx{x}\dx{s} \lesssim \frac{1}{R}.
\end{equation}
In each of the other terms we can pass to the limit $R\to\infty$ using the Monotone Convergence Theorem, and thus we deduce that
\begin{equation}
	\intO \tr \sigma(\hpsi_0) \dx{x} - \intO  \tr \sigma(\hpsi^m)(t) \dx{x} = -2\intOT \sigma(\hpsi^m) : \nabla_x u^m \dx{x}\dx{s} + 2 \intOT \tr(\sigma(\hpsi^m) - \Id) \dx{x}\dx{s}.
\end{equation}
Next, we multiply the Navier--Stokes equation~\eqref{eq:NSgalerkin4} by $c_i$ and sum over the index $i$ to derive the equality
\begin{equation}
\begin{split}
	\frac12\intO |u^m(t)|^2\dx{x} - \frac12\intO |u_0|^2\dx{x} + \nu \intOT |\nabla_x u^m|^2\dx{x}\dx{s}
	= - \int_0^t \skp*{\bar{\mu}_\sigma^m(s),\nabla_x u^m}\dx{s}. 
\end{split}
\end{equation}
Consequently, we obtain the energy equality
	\begin{equation}
	\begin{aligned}
	\label{eq:OBEnergyEqualityM}
		\frac12\intO |u^m(t)|^2\dx{x} &+ \frac12\intO \tr{\sigma(\hpsi^m)}(t)\dx{x} + \nu\intOT |\nabla_x u^m|^2\dx{x}\dx{s} + \intOT\tr{\prt{\sigma(\hpsi^m)-\Id}}\dx{x}\dx{s}\\
		&= \frac12\intO |u_0|^2\dx{x} + \frac12\intO \tr{\sigma(\hpsi_0)}\dx{x} - \int_0^t \skp*{\bar{\mu}_\sigma^m(s) - \sigma(\hpsi^m)(s),\nabla_x u^m}\dx{s}. 
	\end{aligned}
	\end{equation}	
Comparing this with the energy inequality~\eqref{eq:ConformationEnergy2M}, we deduce that
\begin{align}
	\frac12\skp*{\mu^m_{\tr\sigma}(t) - \tr\sigma(\hpsi^m)(t),\mathbbm{1}_{\overline\Omega}} &+ \int_0^t \skp*{\mu^m_{\tr\sigma}(s) - \tr\sigma(\hpsi^m)(s),\mathbbm{1}_{\overline\Omega}}\dx{s} \\
	&\leq \int_0^t \skp*{\bar{\mu}_\sigma^m(s) - \sigma(\hpsi^m)(s),\nabla_x u^m}\dx{s}. 
\end{align}	
Inequality~\eqref{eq:CompatibilityM} then implies that
\begin{equation}
\skp*{\mu^m_{\tr\sigma}(t) - \tr\sigma(\hpsi^m)(t),\mathbbm{1}_{\overline\Omega}} \leq 2\int_0^t \norm{\nabla_x u^m(s)}_{C(\overline\Omega;\R^{d\times d})} \skp*{\mu^m_{\tr\sigma}(s) - \tr\sigma(\hpsi^m)(s),\mathbbm{1}_{\overline\Omega}}\dx{s}. 
\end{equation}
Consequently, Gronwall's Lemma implies that the measure $\mu^m_{\tr\sigma}- \tr\sigma(\hpsi^m)$ is identically equal to zero for a.e.\ $t\in(0,T)$. Therefore, using~\eqref{eq:CompatibilityM} we deduce that
\begin{equation}
	\bar{\mu}_\sigma^m = \sigma(\hpsi^m), \quad \mu_{\tr\sigma}^m = \tr\sigma(\hpsi^m), \quad\text{for a.e.\ $t\in(0,T)$}.
\end{equation}
In particular, $\mu_{\tr\sigma}^m = \tr\bar{\mu}_\sigma^m$, and because $\sigma(\hpsi^m) \inb L^\infty(0,T;L^1(\Omega;\R^{d\times d}))$, it also follows that  $\bar{\mu}_\sigma^m \inb L^\infty(0,T;L^1(\Omega;\R^{d\times d}))$ and $\mu_{\tr\sigma}^m
\inb L^\infty(0,T;L^1(\Omega;\R^{d}))$.

\subsection{The limit $m\to\infty$}
As the final step, we pass to the limit $m\to\infty$ in the Galerkin approximation of the Navier--Stokes equation. Again, we will use the uniform bounds obtained from the entropy estimate~\eqref{eq:GalerkinEntropy3} to deduce that $u^m$ is uniformly bounded in $L^\infty(0,T;L^2(\Omega;\R^d))\cap L^2(0,T;V)$.  Then one sees from~\eqref{eq:NSgalerkin4} that
\begin{equation}
	\partial_t u^m \inb L^2(0,T;[V \cap H^{d+1}(\Omega;\mathbb{R}^d)]'),   \quad \text{uniformly in $m$}.
\end{equation}
It follows that there exists a $u\in C_{\mathrm{weak}}([0,T];L^2(\R^d))$ such that
	\begin{alignat}{2}
		u^m &\rightarrow u\;\; &&\text{strongly in $L^2(0,T;L^2(\Omega;\R^d))$,} \\
		\nabla_{x} u^{m} &\rightharpoonup \nabla_{x} u\;\; &&\text{weakly in $L^2(0,T;L^2(\Omega;\R^{d\times d}))$}.
	\end{alignat}
Moreover, since the sequence of measures $\bar{\mu}_{\sigma}^m=\sigma(\hpsi^m)$ is uniformly bounded in $L^\infty(0,T;\mathcal{M}(\overline\Omega;\R^{d\times d}_{sym}))$ (cf.\ the final sentence of the previous section), there is a measure $\bar{\mu}_{\sigma}$ and a non-relabelled subsequence of $(\bar{\mu}_{\sigma}^m)_{m \geq 1}$, such that
\begin{align}
	\bar{\mu}_{\sigma}^m &\rightharpoonup^* \bar{\mu}_{\sigma}\;\;\text{weakly$^*$ in}\; L^\infty(0,T;\mathcal{M}(\overline\Omega;\R^{d\times d}_{sym}))\quad \text{as $m \to \infty$.}
\end{align}

Passing to the limit in the Fokker--Planck equation follows similar lines as before.
In particular, there exists a function $\hpsi\geq 0$ such that, as $m \to \infty$, 
	\begin{alignat}{2}
		\hpsi^{m} &\rightarrow \hpsi\;\; &&\text{strongly in $L^q(0,T;L^1_M(\OD))$,\;\; $q\in[1,\infty)$,} \\
		M\nabla_{x,q} \hpsi^{m} &\rightharpoonup M\nabla_{x,q}\hpsi\;\; && \text{weakly in $L^1((0,T)\times\Omega;L^1(D;\R^{2d}))$},\\
		M\partial_t \hpsi^{m} &\rightharpoonup M\partial_t\hpsi\;\; && \text{weakly in $L^2(0,T;[H^s(\OD)]')$}.
	\end{alignat}
Moreover, by choosing a further subsequence, we can assume that the first convergence property stated above is true almost everywhere on $(0,T) \times \Omega\times D$.
	
We can now pass to the limit in~\eqref{eq:FPgalerkin4} to obtain
	\begin{equation}
	\label{eq:FPfinal}
	\begin{split}
	\intOT\int_D & M\hpsi\partial_t\phi \dx{q}\dx{x}\dx{s} + \int_{\Omega\times D} M\hpsi_0\phi(0)\dx{q}\dx{x} - \int_{\Omega\times D} M\hpsi(t)\phi(t)\dx{q}\dx{x}\\
	& = -\intOT\int_D M\hpsi u\cdot \nabla_x\phi\dx{q}\dx{x}\dx{s} - \intOT\int_D M\hpsi \prt{(\nabla_x u) q} \cdot \nabla_q \phi\dx{q}\dx{x}\dx{s} \\
	&\quad + \mu\intOT\int_D M\nabla_x\hpsi\cdot\nabla_x\phi\dx{q}\dx{x}\dx{s}
	+ \intOT\int_D M\nabla_q\hpsi\cdot\nabla_q\phi\dx{q}\dx{x}\dx{s}\\ 
	&\quad \forall t\in[0,T]\quad\forall\, \phi\in W^{1,1}(0,T; C_c^\infty(\overline\Omega\times D)),
	\end{split}
	\end{equation}
and then by a standard density argument we extend the class of test functions under consideration from $\phi \in H^1(0,T;C_c^\infty(\overline\Omega\times D))$ to $\phi\in H^1(0,T;H^s(\OD))$, $s>d+1$.

We now define the parametrised measure
\begin{equation}
	m_{NS}(t) := \bar\mu_{\sigma}(t) - \sigma(\hpsi)(t),\quad t \in (0,T).
\end{equation}
Then, by passing to the limit $m \to \infty$ in~\eqref{eq:NSgalerkin4} we obtain
\begin{align}
\label{eq:NSfinal}	
		\intOT & u\cdot \partial_t\vartheta \dx{x}\dx{s} + \intOT( u\otimes u) : \nabla_x \vartheta \dx{x}\dx{s} - \nu \intOT \nabla_x u : \nabla_x \vartheta \dx{x}\dx{s}\\
		&= \intOT \sigma(\hpsi) : \nabla_x\vartheta\dx{x}\dx{s} + \int_0^t \skp*{m_{NS}(s), \nabla_x\vartheta}\dx{s} + \intO u(t)\cdot\vartheta(t)\dx{x} - \intO u_0\cdot \vartheta(0) \dx{x}\\
		&\forall\, \vartheta \in \set*{ \vartheta \in L^2(0,T;V)\cap L^1(0,T;C^1(\overline\Omega;\mathbb{R}^d)) \;|\; \partial_t\vartheta\in L^1(0,T;L^2(\Omega;\R^d))},
\end{align}
as required.

Passing to the limit $m\to\infty$ in~\eqref{eq:PositivityMIntermediate}, we deduce that
\begin{equation}
	\int_0^T\eta(s)\skp*{\bar{\mu}_\sigma(s),\xi\otimes\xi}\dx s \geq \int_0^T\eta(s)\intO \xi^{\mathrm{T}} \prt*{\int_{B_{R'}(0)} M\hpsi\, (q\otimes q)\dx{q}} \xi \dx{x}\dx s\quad \forall\, \xi \in \mathbb{R}^d.
\end{equation}
Similarly as before this leads to showing that the defect measure $m_{NS}$ is positive semidefinite, i.e.,
\begin{equation}
\label{eq:Positivity}
	\skp*{m_{NS}(t),\xi\otimes\xi} \geq 0\quad \forall\, \xi\in\R^d\quad \text{for a.e. $t \in (0,T)$}.
\end{equation}

Next, we consider the energy identity~\eqref{eq:ConformationEnergy2} and repeat the argument leading to inequality~\eqref{eq:ConformationEnergy2M}.
Passing to the limit $m\to\infty$ with the same choice of approximations of the indicator function of $[0,t)$ as before, we obtain the energy inequality
\begin{equation}
\label{eq:ConformationEnergy3}
\begin{split}
	\frac12\intO |u(t)|^2\dx{x} + \frac12 \skp*{\tr\bar{\mu}_{\sigma}(t),\mathbbm{1}_{\overline\Omega}} &+ \nu\intOT |\nabla_xu|^2\dx{x}\dx{s} + \int_0^t \brk*{\skp*{\tr\bar{\mu}_{\sigma}(s),\mathbbm{1}_{\overline\Omega}} - d|\Omega|} \dx{s}\\
	 &\leq \frac12\intO|u_0|^2\dx{x} + \frac12\intO \tr{\sigma(\hpsi_0)}\dx{x},
\end{split}
\end{equation}
valid for a.e.\ $t\in(0,T)$.
Defining 
\begin{equation}
	m_{OB}(t) := \tr{\prt*{\bar{\mu}_{\sigma}(t) - \sigma(\hpsi)(t)}}\quad \text{for $t \in (0,T)$}, 
\end{equation}
we can rewrite~\eqref{eq:ConformationEnergy3} to obtain the inequality
\begin{align}
		\frac12\intO &|u(t)|^2\dx{x} + \frac12\intO\tr{\sigma(\hpsi(t))}\dx{x} + \frac12\skp*{m_{OB}(t),\mathbbm{1}_{\overline\Omega}}  + \nu\intOT |\nabla_x u|^2\dx{x}\dx{s}\\
		&+ \intOT\tr{\prt*{\sigma(\hpsi)-\Id}}\dx{x}\dx{s} + \int_0^t \skp*{m_{OB}(s),\mathbbm{1}_{\overline\Omega}}\dx{s}
		\leq \frac12\intO |u_0|^2\dx{x} + \frac12\intO \tr{\sigma(\hpsi_0)}\dx{x},
\end{align}
for a.e.\ $t\in(0,T)$.

The compatibility condition~\eqref{eq:GeneralisedCompatibility} (with $\zeta=const.$) follows in an analogous way as inequality~\eqref{eq:CompatibilityM} before.

Therefore the quadruple $(u,\hpsi, m_{NS}, m_{OB})$ is a generalised dissipative solution of the Hookean dumbbell model according to Definition~\ref{def:GenSol3d}. Furthermore, passing to the limit in~\eqref{eq:GalerkinEntropy3} (via weak lower semi-continuity and Fatou's Lemma) we deduce~\eqref{eq:LlogLenergy}. Finally, passing to the limits $R\to\infty$ and $m\to\infty$ in~\eqref{eq:EqnMacroDensity2}, we deduce that
\begin{equation}
	\int_D M(q)\hat\psi(t,x,q)\dx{q} = 1 \quad \text{for a.e.\ $(t,x)\in [0,T]\times\Omega$}.
\end{equation}

Thus, Theorem~\ref{thm:Existence} is proved for the case $d\geq 3$. The case $d=2$ requires a modification of the macroscopic closure step, and is explained in the next section.\qed

\section{Proof of Theorem~\ref{thm:Existence} in 2D}
\label{sec:Existence2d}

The proof of existence of generalised dissipative solutions for $d=2$ follows essentially the same lines of argument as above. The only deviation is the fact that we can deduce additional regularity of the approximate conformation tensor. Taking advantage of the fact that the uniform $L^2$ estimates below do not require any cancellations between the Fokker--Planck and the Navier--Stokes equations, we derive a macroscopic closure of the Fokker--Planck equation already from equation~\eqref{eq:FPgalerkin2}. Using the additional regularity of the approximate conformation tensor (at each level of approximation), we are able to derive an equation for the limit quantity, which turns out to be the stress equation of the Oldroyd-B system.

Define the approximate conformation tensor
\begin{equation}
	\sigma^{m,R,L}(t,x) := \int_D M\hpsi^{m,R,L}\, (q\otimes q) \dx{q}.
\end{equation}
Let $\varphi \in H^1(\Omega;\R^{2\times 2})$ and set
\begin{equation}
	\phi(x,q) = (q\otimes q) : \varphi(x).
\end{equation}
Testing equation~\eqref{eq:FPgalerkin2} with $\phi$ we obtain, on the left-hand side,
\begin{align*}
	\left\langle \partial_t\hpsi^{m,R,L},\phi\right\rangle_{([H^1_M(\Omega \times D)]',H^1_M(\Omega \times D))} &= \left\langle \partial_t\prt*{\int_D M\hpsi^{m,R,L} q\otimes q\dx{q}} , \varphi\right \rangle_{([H^1(\Omega;\mathbb{R}^{2 \times 2})]',H^1(\Omega;\mathbb{R}^{2 \times 2}))}\\
	& = \left\langle\partial_t\sigma^{m,R,L} , \varphi 
	\right \rangle_{([H^1(\Omega;\mathbb{R}^{2 \times 2})]',H^1(\Omega;\mathbb{R}^{2 \times 2}))},
\end{align*}
whereas on the right-hand side we have, for the terms involving $x$-derivatives,
\begin{equation}
	\intOD M\hpsi^{m,R,L} u^{m,R,L}_k \partial_k\varphi_{ij} (q\otimes q)_{ij} \dx{q}\dx{x} = \intO\prt*{u^{m,R,L}\cdot\nabla_x}\varphi : \sigma^{m,R,L} \dx{x};
\end{equation}
\begin{align}
	-\mu\intOD M\partial_k\hpsi^{m,R,L} \partial_k\prt*{ q_i q_j \varphi_{ij}}\dx{q}\dx{x} &= -\mu\intOD \partial_k\prt*{M\hpsi^{m,R,L} q_i q_j}\partial_k\varphi_{ij}\dx{q}\dx{x} \\
	& = -\mu \intO \nabla_x\sigma^{m,R,L} :: \nabla\varphi\dx{x}.
\end{align}
In the transition to the last line we took advantage of $\hpsi^{m,R,L}$ being in $L^2(0,T;H^1_M(\OD))$ to justify differentiating under the integral sign.
The $q$-convective term can be written as follows:
\begin{align}
	\intOD & M\Lambda_L(\hpsi^{m,R,L})\chi_R(|q|)\, ((\nabla_x u^{m,R,L})q) \cdot \nabla_q\prt*{ (q\otimes q) : \varphi} \dx{q}\dx{x}\\
	 =& \intO \brk*{\int_D M\Lambda_L(\hpsi^{m,R,L})\chi_R(|q|)\, (q\otimes q)\dx{q}}\prt*{\nabla_x u^{m,R,L}}^{\mathrm{T}} : \varphi \dx{x}\\
	 &+\intO \nabla_x u^{m,R,L}\brk*{\int_D M\Lambda_L(\hpsi^{m,R,L})\chi_R(|q|)\, (q\otimes q)\dx{q}} : \varphi \dx{x}. 
\end{align}
Finally, in the diffusion term, we integrate by parts to derive the equality
\begin{align}
	\intOD M\nabla_q\hpsi^{m,R,L} & \cdot \nabla_q\prt*{(q\otimes q) : \varphi} \dx{q}\dx{x} \\
    &= 2\intO \sigma^{m,R,L} : \varphi \dx{x} - 2\intO \brk*{\int_D M\hpsi^{m,R,L}\dx{q}}\Id : \varphi \dx{x}.
\end{align}

We thus arrive at a weak form of an evolution equation satisfied by $\sigma^{m,R,L}$. In particular, thanks to previous estimates yielding $\hpsi^{m,R,L}\in L^2(0,T;H^1_M(\OD))$, we can choose $\varphi = \sigma^{m,R,L}$ as a test function for this equation. To obtain a Gronwall-type inequality, we need to use the Gagliardo--Nirenberg inequality in the following bound:
\begin{align}
	&\abs*{\intO \nabla_x u^{m,R,L}\brk*{\int_D M\Lambda_L(\hpsi^{m,R,L})\chi_R(|q|)\,(q\otimes q)\dx{q}} : \sigma^{m,R,L} \dx{x}}\\
	&\leq \prt*{\intO\abs*{\nabla_xu^{m,R,L}}^2\dx{x}}^{1/2}\prt*{\intO \abs*{\sigma^{m,R,L}}^2 \abs*{\int_D M\Lambda_L(\hpsi^{m,R,L})\chi_R(|q|)\,|q|^2\dx{q}}^2\dx{x}}^{1/2}\\
	&\leq \norm{\nabla_xu^{m,R,L}}_{L^2(\Omega;\R^{2\times 2})}\prt*{\intO \abs*{\sigma^{m,R,L}}^2 \abs*{\int_D M\hpsi^{m,R,L}|q|^2\dx{q}}^2\dx{x}}^{1/2}\\
	&\leq \norm{\nabla_xu^{m,R,L}}_{L^2(\Omega;\R^{2\times 2})}\norm{\sigma^{m,R,L}}_{L^4(\Omega;\R^{2\times 2})}^2\\ 
	&\leq \frac{1}{\mu}\norm{\nabla_xu^{m,R,L}}_{L^2(\Omega;\R^{2\times 2})}^2\norm{\sigma^{m,R,L}}_{L^2(\Omega;\R^{2\times 2})}^2 + \frac{\mu}{4}\norm{\nabla_x\sigma^{m,R,L}}_{L^2(\Omega;\R^{2\times 2})}^2,
\end{align}
where in the transition from the third line to the fourth line we used the pointwise bound
\begin{equation}
	0 \leq \int_D M\hpsi^{m,R,L} |q|^2\dx{q} = \tr\sigma^{m,R,L} \leq \abs*{\sigma^{m,R,L}},
\end{equation}
with $|\sigma^{m,R,L}|$ signifying the Frobenius matrix-norm of the symmetric positive semidefinite matrix $\sigma^{m,R,L}$. We end up with
\begin{equation}
	\ddt \intO \abs*{\sigma^{m,R,L}}^2 \dx{x} + \frac{\mu}{2}\intO \abs*{\nabla_x\sigma^{m,R,L}}^2 \dx{x} \leq C(\Omega) + a(t)\intO \abs*{\sigma^{m,R,L}}^2\dx{x},
\end{equation}
for an integrable nonnegative function $a\in L^1(0,T)$. Using Gronwall's Lemma we deduce that
\begin{equation}
\label{eq:SigmaL2}
	\sup_{t\in(0,T)} \intO \abs*{\sigma^{m,R,L}}^2 \dx{x} + \int_0^t\intO \abs*{\nabla_x\sigma^{m,R,L}}^2 \dx{x}\dx{s} \leq C,
\end{equation}
where the constant $C$ is independent of $m,R,L$.
With the above estimates in hand, we infer the following bound on the time derivative of $\sigma^{m,R,L}$:
\begin{equation}
\label{eq:SigmaTime}
	\|\partial_t\sigma^{m,R,L}\|_{L^2(0,T;[H^1(\Omega;\mathbb{R}^{2 \times 2})]')} \leq C.
\end{equation}

By applying the Aubin--Lions Lemma we deduce that the sequence $(\sigma^{m,R,L})_{L>0}$ is precompact in $L^2(0,T;L^2(\Omega;\R^{2\times 2}))$. Therefore, there is a function $\sigma^{m,R}\in C([0,T];L^2(\Omega;\R^{2\times 2}))$ (and a non-relabelled subsequence) such that, as $L \to \infty$, 
	\begin{alignat}{2}
		\sigma^{m,R,L} &\rightarrow \sigma^{m,R}\;\; &&\text{strongly in $L^2(0,T;L^2(\Omega;\R^{2\times 2}))$},\\
		\nabla_{x} \sigma^{m,R,L} &\rightharpoonup \nabla_{x}\sigma^{m,R}\;\; &&\text{weakly in $L^2(0,T;L^2(\Omega;\R^{2\times 2}))$}.
	\end{alignat}
Since the above bounds are uniform in $R$ and $m$, we can similarly find $\sigma^m$ and $\sigma$ such that
\begin{equation}
   \sigma \in H^1(0,T;[H^1(\Omega;\R^{2\times 2})]') \cap C([0,T];L^2(\Omega;\R^{2\times 2})) \cap L^2(0,T;H^1(\Omega;\R^{2\times 2})), 
\end{equation}
and
\begin{equation}
	\lim_{R\to\infty} \sigma^{m,R} = \sigma^m,\quad \lim_{m\to\infty}\sigma^m = \sigma,
\end{equation}
where the limits are to be understood in the strong sense in $L^2(0,T;L^2(\Omega;\R^{2\times 2}))$ and weakly in $L^2(0,T;H^1(\Omega;\R^{2\times 2}))$.

Passing to the limit with $L\to\infty$ in the equation for $\sigma^{m,R,L}$, we arrive at the following equation for $\sigma^{m,R}$:
\begin{equation}
\label{eq:SigmaMRequation}
\begin{aligned}
	-\intOT\sigma^{m,R} : \partial_t\varphi \dx{x}\dx{s} =& \intO \sigma(\hpsi_0) : \varphi(0) \dx{x} - \intO \sigma^{m,R}(t) : \varphi(t) \dx{x}\\
	 &+\intOT\prt*{u^{m,R}\cdot\nabla_x}\varphi : \sigma^{m,R}\dx{x}\dx{s} - \mu\intOT\nabla_x\sigma^{m,R} :: \nabla_x\varphi \dx{x}\dx{s}\\
	& +\intOT\brk*{\int_D M\hpsi^{m,R}\chi_R\, (q\otimes q) \dx{q}}\prt*{\nabla_xu^{m,R}}^{\mathrm{T}} : \varphi \dx{x}\dx{s}\\
	& + \intOT\nabla_xu^{m,R}\brk*{\int_D M\hpsi^{m,R}\chi_R (q\otimes q) \dx{q}} : \varphi \dx{x}\dx{s}\\
	& - 2\intOT\sigma^{m,R} : \varphi\dx{x}\dx{s} + 2\intOT\brk*{\int_D M\hpsi^{m,R}\dx{q}}\Id:\varphi \dx{x}\dx{s}\\
	&\forall\, \varphi\in C^1([0,T]\times\overline\Omega;\R^{2\times 2})\;\; \text{and }\forall t\in [0,T].
\end{aligned}
\end{equation}
Testing this equation with $\varphi\equiv \Id$ and using~\eqref{eq:NSenergyMacro}, we deduce that 
\begin{equation}
\label{eq:ConformationEnergy2D}
\begin{split}
	\frac12\intO |u^{m,R}(t)|^2\dx{x} &+ \frac12 \intO \tr{\sigma^{m,R}}(t) \dx{x} + \nu\intOT |\nabla_xu^{m,R}|^2\dx{x}\dx{s} + \intOT \tr{(\sigma^{m,R}-\Id)}\dx{x}\dx{s}\\
	 &= \frac12\intO|u^{m}(0)|^2\dx{x} + \frac12\intO \tr{\sigma(\hpsi_0)}\dx{x} + O\prt*{\frac1R}.
\end{split}
\end{equation}
Then, by passing to the limit in this equality with $R\to \infty$, by weak lower-semicontinuity we arrive at the energy inequality satisfied by $\sigma^m$:
\begin{equation}
\label{eq:ConformationEnergy2DM}
\begin{split}
	\frac12\intO |u^m(t)|^2\dx{x} &+ \frac12 \intO \tr{\sigma^m}(t) \dx{x} + \nu\intOT |\nabla_xu^m|^2\dx{x}\dx{s} + \intOT \tr{(\sigma^m-\Id)}\dx{x}\dx{s}\\
	 &\leq \frac12\intO|u(0)|^2\dx{x} + \frac12\intO \tr{\sigma(\hpsi_0)}\dx{x}.
\end{split}
\end{equation}

In a similar way, we can derive an equation for the truncated approximate conformation tensor. Let
\begin{equation}
    \bar\sigma^{m,R,L}(t,x) := \int_D M\hpsi^{m,R,L}\chi_R(|q|)\, (q\otimes q) \dx{q}.
\end{equation}
We now test equation~\eqref{eq:FPgalerkin2} with
\begin{equation}
   	\phi(x,q) = \chi_R(|q|)\, (q\otimes q) : \varphi(x), 
\end{equation}
where $\varphi\in H^1(\Omega;\R^{2\times 2})$.
This yields some additional terms arising from the terms in~\eqref{eq:FPgalerkin2} that involve derivatives in $q$. More precisely, we have
\begin{align}
	\intOD & M\Lambda_L(\hpsi^{m,R,L})\chi_R(|q|) \,((\nabla_x u^{m,R,L})q) \cdot \nabla_q\prt*{\chi_R(|q|)\, (q\otimes q) : \varphi} \dx{q}\dx{x}\\
	 =& \intO \brk*{\int_D M\Lambda_L(\hpsi^{m,R,L})\chi_R^2(|q|)\, (q\otimes q)\dx{q}}\prt*{\nabla_x u^{m,R,L}}^{\mathrm T} : \varphi \dx{x}\\
	 &+\intO \nabla_x u^{m,R,L}\brk*{\int_D M\Lambda_L(\hpsi^{m,R,L})\chi_R^2(|q|)\, (q\otimes q)\dx{q}} : \varphi \dx{x}
	 \\
	 &+ \intOD M\Lambda_L(\hpsi^{m,R,L})\chi_R(|q|)\, \prt*{(\nabla_q(\chi_R(|q|))\otimes q) : \nabla_x u^{m,R,L}}\, \prt*{(q\otimes q) : \varphi} \dx{q}\dx{x},
\end{align}
where the three terms on the right-hand side of this equality arose from applying the product rule for differentiation to $\nabla_q\prt*{\chi_R(|q|)\, (q\otimes q) : \varphi}$ and noting that $\varphi$ is independent of $q$. Furthermore,
\begin{align}
	\intOD & M\nabla_q\hpsi^{m,R,L} \cdot \nabla_q\prt*{\chi_R(|q|)\,(q\otimes q) : \varphi} \dx{q}\dx{x}\\
    &= 2\intO \bar\sigma^{m,R,L} : \varphi \dx{x} - 2\intO \brk*{\int_D M\hpsi^{m,R,L}\chi_R\dx{q}}\Id : \varphi \dx{x}\\
    &\quad-2\intO \varphi : \int_D M\hpsi^{m,R,L}\prt*{(\nabla_q\chi_R\otimes q) + (q\otimes\nabla_q\chi_R)}\dx{q}\dx{x}\\
    &\quad-\intO \varphi : \int_D M\hpsi^{m,R,L}\,(q\otimes q)\, \Delta\chi_R \dx{q}\dx{x} +\intO \varphi : \int_D M\hpsi^{m,R,L}\,(q\otimes q)\, q\cdot\nabla_q\chi_R \dx{q}\dx{x}.
\end{align}
Testing the resulting equation with $\varphi = \bar\sigma^{m,R,L}$, and using the bounds $|\Delta\chi_R|\lesssim R^{-2}$, $|q\cdot \nabla_q\chi_R|\lesssim C$, we arrive at the following uniform bounds:
\begin{align}
\label{eq:BarSigmaL2}
	\sup_{t\in(0,T)} \intO \abs*{\bar\sigma^{m,R,L}}^2 \dx{x} + \int_0^t\intO \abs*{\nabla_x\bar\sigma^{m,R,L}}^2 \dx{x}\dx{s} &\leq C,\\
	 \|\partial_t\bar\sigma^{m,R,L}\|_{L^2(0,T;[H^1(\Omega;\mathbb{R}^{2 \times 2})]')} \leq C.
\end{align}
Consequently, up to extracting subsequences, there are symmetric matrix-functions $\bar\sigma^{m,R}, \bar\sigma^{m}, \bar\sigma$ such that 
\begin{equation}
    \bar\sigma \in H^1(0,T;[H^1(\Omega;\R^{2\times 2})]')\cap C([0,T];L^2(\Omega;\R^{2\times 2}))\cap L^2(0,T;H^1(\Omega;\R^{2\times 2}))\	\quad \text{with}
\end{equation}
\begin{equation}
    \lim_{L\to\infty} \bar\sigma^{m,R,L} = \bar\sigma^{m,R},\quad \lim_{R\to\infty} \bar\sigma^{m,R} = \bar\sigma^m,\quad \lim_{m\to\infty}\bar\sigma^m = \bar\sigma,
\end{equation}
where the limits are to be understood in the strong sense in $L^2(0,T;L^2(\Omega;\R^{2\times 2}))$ and weakly in $L^2(0,T;H^1(\Omega;\R^{2\times 2}))$.
In fact, thanks to the presence of the truncation $\chi_R$ in the definition of $\bar\sigma^{m,R,L}$, we can identify $\bar\sigma^{m,R}$ as
\begin{equation}
    \bar\sigma^{m,R}(t,x) = \int_D M\hpsi^{m,R}\chi_R(|q|)\,(q\otimes q)\dx{q}.
\end{equation}

Passing to the limit $R\to\infty$ in~\eqref{eq:NSgalerkin3} we obtain
\begin{equation}
\begin{split}
\label{eq:NSgalerkin2D}
	\int_0^t\intO \partial_t u^{m}\cdot w_i\dx{x}\dx s - \int_0^t\intO (u^{m}\otimes u^{m}) : \nabla_x w_i\dx{x}\dx s &+ \nu\int_0^t\intO \nabla_x u^{m} : \nabla_x w_i\dx{x}\dx{s}\\
	&= -\int_0^t \bar\sigma^m :\nabla_x w_i\dx{s},
\end{split}
\end{equation}
for all $i=1,\ldots, m$ and a.e.\ $t\in(0,T)$.
Proceeding as in the general case in the previous section, we readily deduce the following properties
\begin{equation}
	\intO\prt*{\bar\sigma^m - \sigma(\hpsi^m)} : (\xi\otimes\xi) \dx x \geq 0\quad\forall\xi\in\R^d,\;\;\text{a.e.}\; t\in(0,T),
\end{equation}
and
\begin{equation}
	\abs*{\intO\prt*{\bar\sigma^m - \sigma(\hpsi^m)} : \nabla_x\vartheta \dx x} \leq \norm*{\vartheta}_{C^1(\overline{\Omega})} \intO \tr\prt{\sigma^m - \sigma(\hpsi^m)} \dx x\quad \forall \vartheta \in C^1(\overline{\Omega}).
\end{equation}
Consequently, by arguing in the same way as before, we use the Fokker--Planck equation~\eqref{eq:FPgalerkin4} and equation~\eqref{eq:NSgalerkin2D} in conjunction with inequality~\eqref{eq:ConformationEnergy2DM} to infer that
\begin{equation}
	\bar\sigma^m = \sigma(\hpsi^m),\quad \sigma^m = \tr\bar\sigma^m.
\end{equation}
Using the strong convergence of $\hpsi^{m}$ and Fatou's Lemma we deduce that, for a.e.\ $(t,x)$,
\begin{equation}
\label{eq:SigmaPointwise}
    |\sigma(\hpsi)| \lesssim \tr\sigma(\hpsi) \leq \liminf_{m\to\infty}\int_D M\hpsi^{m}|q|^2\dx{q} = \tr\bar\sigma.
\end{equation}
Passing to the limit $m\to\infty$ and setting $m_{NS} = \bar\sigma - \sigma(\hpsi)$ and $m_{OB} = \tr m_{NS}$, we see that $m_{NS}$ and $\sigma(\hpsi)$ both belong to $L^q(0,T;L^p(\Omega;\R^{2\times 2}))$ for $q=\frac{2p}{p-2}$ and $p\in[2,\infty)$, as claimed.

\bigskip

To conclude the final part of the proof of the theorem, we derive a macroscopic equation for $\bar\sigma^m = \sigma(\hpsi^m)$. To this end we will test the Fokker--Planck equation~\eqref{eq:FPfinal} by the following test function:
\begin{equation}
	\phi(t,x,q) = T_L(q_i)T_L(q_j)\varphi_{ij}(t,x), \quad \varphi\in H^1(0,T;C^{1}(\overline{\Omega};\R^{2\times 2})).
\end{equation}  
Recall that the truncation $T_L$ is defined in~\eqref{eq:truncationT} and $T_L'(s) = \chi_L(s)$.
Let us denote by $\sigma^L$ the $2 \times 2$ matrix function whose entries are
\begin{equation}
	\sigma^L_{ij} := \int_D M\hpsi^m T_L(q_i)T_L(q_j) \dx q, \quad i,j =1,2.
\end{equation}
Then $|\sigma^L| \leq C\tr\sigma(\hpsi^m)$, so that $\sigma^L$ inherits all the integrability properties of $\sigma(\hpsi^m)$, and
\begin{equation}
	|\nabla_x\sigma^L_{ij}| = \abs*{\int_D M\nabla_x\hpsi^m T_L(q_i)T_L(q_j) \dx q} \lesssim L^2\int_D M|\nabla_x\hpsi^m| \dx q,
\end{equation}
so that $\sigma^L\in L^2(0,T;H^1(\Omega;\R^{d\times d}))$ for each $L>0$. Similarly, $\sigma^L\in C([0,T];L^2(\Omega;\R^{2\times 2}))$ for each $L>0$. Moreover, by the Dominated Convergence Theorem,
\begin{equation}
	\sigma^L \to \sigma(\hpsi^m)\quad \text{as $L\to\infty$\quad in $L^q(0,T;L^1(\Omega;\R^{2\times 2}))$, $q\in[1,\infty)$}.
\end{equation}
We shall now upgrade this convergence by deriving a uniform bound on  $\sigma^L$ in $L^2(0,T;H^1(\Omega;\R^{2 \times 2}))$.
Testing the Fokker--Planck equation by the function $\phi$ as above, we have
\begin{align}
	\intOT & \int_D M\hpsi^m\partial_t\phi \dx{q}\dx{x}\dx{s} + \int_{\Omega\times D} M\hpsi_0\phi(0)\dx{q}\dx{x} - \int_{\Omega\times D} M\hpsi^m(t)\phi(t)\dx{q}\dx{x}\\
	&=\intOT \sigma^L : \partial_t\varphi \dx x \dx s + \intO \prt*{\int_D M\hpsi_0T_L(q_i)T_L(q_j)\dx q} : \varphi(0)\dx x - \intO \sigma^L(t) : \varphi(t) \dx x.
\end{align}

The $q$-diffusion term becomes
\begin{align}
	-\int_0^t\intOD & M\nabla_q\hpsi^m \cdot \nabla_q\prt{T_L(q_i)T_L(q_j)}\varphi_{ij} \dx q \dx x \dx s\\
	 =&  - \int_0^t\intOD M\partial_{q_k}\hpsi^m T_L'(q_i)T_L(q_j)\delta_{ik}\varphi_{ij} \dx q \dx x \dx s\\
	&- \int_0^t\intOD M\partial_{q_k}\hpsi^m T_L'(q_j)T_L(q_i)\delta_{jk}\varphi_{ij} \dx q \dx x \dx s.
\end{align}
Integrating by parts, we have
\begin{align}
	- \int_0^t\intOD M\partial_{q_k}\hpsi^m T_L'(q_k)T_L(q_j)\varphi_{kj} \dx q \dx x \dx s =& \int_0^t\intOD M\hpsi^m T_L''(q_k)T_L(q_j)\varphi_{kj} \dx q\dx x\dx s\\
		&+ \int_0^t\intOD M\hpsi^m (T_L'(q_k))^2\varphi_{kk} \dx q\dx x\dx s\\
		&- \int_0^t\intOD M\hpsi^m T_L'(q_k)T_L(q_j)q_k\varphi_{kj} \dx q\dx x\dx s.
\end{align}
For the first term we observe that
\begin{equation}
	 \abs*{\int_0^t\intOD M\hpsi^m T_L''(q_k)T_L(q_j)\varphi_{kj} \dx q\dx x\dx s} \leq \frac{C}{L} \int_0^t\intOD M\hpsi^m |q| \dx q\dx x\dx s \lesssim \frac{1}{L}.
\end{equation}
For the transport term we write
\begin{align}
	\int_0^t\intOD M\hpsi^m u^m &\cdot \nabla_x \phi \dx q\dx x\dx s = \int_0^t\intO \sigma^L_{ij} u^m\cdot \nabla_x\varphi_{ij} \dx x\dx s,
\end{align}
and finally for the diffusion term
\begin{align}
	\intOT\int_D M\nabla_x\hpsi^m \cdot \nabla_x\phi\dx q\dx x \dx s &= \intOT\int_D M\nabla_x\hpsi^m T_L(q_i)T_L(q_j)\cdot\nabla_x\varphi_{ij}\dx q \dx x \dx s\\
	& = \intOT \nabla_x\sigma^L_{ij} \cdot \nabla_x \varphi_{ij} \dx x \dx s.
\end{align}
We thus arrive at an evolution equation for $\sigma^L$.  Let us now observe the following bounds
\begin{align}
	 \int_0^t\intOD M\hpsi^m T_L''(q_k)T_L(q_j)\varphi_{kj} \dx q\dx x\dx s &\lesssim \frac{1}{L}\intOT|\varphi|\int_D M\hpsi^m|q|\dx q \dx x \dx s \\
	 & \lesssim \frac{1}{L}\norm{\varphi}_{L^2(0,T;L^2(\Omega;\R^{2\times 2}))};
\end{align}
\begin{align}
	\int_0^t\intOD M\hpsi^m (T_L'(q_k))^2\varphi_{kk} \dx q\dx x\dx s &\lesssim \intOT|\varphi|\int_D M\hpsi^m \dx q \dx x \dx s \lesssim \norm{\varphi}_{L^2(0,T;L^2(\Omega;\R^{2\times 2}))};
\end{align}
\begin{align}
	\int_0^t\intOD M\hpsi^m T_L'(q_k)T_L(q_j)q_k\varphi_{kj} \dx q\dx x\dx s &\lesssim \intOT|\varphi|\int_DM\hpsi^m|q|^2\dx q\dx x\dx s\\
	 &\lesssim \norm{\varphi}_{L^2(0,T;L^2(\Omega;\R^{2\times 2}))}\norm{\sigma(\hpsi^m)}_{L^2(0,T;L^2(\Omega;\R^{2\times 2}))};
\end{align}
and
\begin{align}
	\int_0^t\intOD & M\hpsi^m (\nabla_xu^m)_{kl}q_lT_L'(q_k)T_L(q_j)\varphi_{kj}\dx q\dx x\dx s \\
	&\lesssim \intOT |\nabla_x u^m||\varphi|\tr\sigma(\hpsi^m) \dx x\dx s\\
	&\lesssim \norm{\nabla_xu^m}_{L^\infty(0,T;L^\infty(\Omega;\R^{2\times 2}))}\norm{\sigma(\hpsi)}_{L^2(0,T;L^2(\Omega;\R^{2\times 2}))}\norm{\varphi}_{L^2(0,T;L^2(\Omega;\R^{2\times 2}))}.
\end{align}

Since $\sigma^L$ belongs to $L^2(0,T;H^1(\Omega;\R^{2\times 2}))$ for each $L>0$, there is a sequence $(\sigma^{L,n})_{n\in\mathbb{N}} \subset L^2(0,T;C^1(\overline{\Omega};\R^{2\times 2}))$ which converges to $\sigma^L$ in the $L^2(0,T;H^1(\Omega;\R^{2 \times 2}))$ topology. We can then choose $\varphi=\sigma^{L,n}$. These bounds, in conjunction with the uniform bound $\sigma^L\inb L^2(0,T;L^2(\Omega;\R^{2\times 2}))$, show that in the limit $n\to\infty$ each of the above terms is bounded uniformly in $L$. For the terms involving spatial derivatives we observe that
\begin{align}
	 \intOT \sigma^L_{ij} u^m &\cdot \nabla_x\sigma^{L,n}_{ij}\dx x\dx s = \intOT \sigma^L_{ij} u^m\cdot \nabla_x\sigma^L_{ij}\dx x\dx s + \intOT \sigma^L_{ij} u^m\cdot \nabla_x(\sigma^{L,n}_{ij}-\sigma^L_{ij})\dx x\dx s\\
	 &=\intOT \sigma^L_{ij} u^m\cdot \nabla_x(\sigma^{L,n}_{ij}-\sigma^L_{ij})\dx x\dx s\\
	 &\leq \norm{u^m}_{L^\infty(0,T;L^\infty(\Omega;\R^2))}\norm{\sigma(\hpsi)}_{L^2(0,T;L^2(\Omega;\R^{2\times 2}))}\norm{\nabla_x(\sigma^{L,n}_{ij}-\sigma^L_{ij})}_{L^2(0,T;L^2(\Omega;\R^{2}))},
\end{align}
and
\begin{align}
	\intOT \nabla_x\sigma^L_{ij} \cdot \nabla_x \sigma^{L,n}_{ij} \dx x \dx s \to \intOT |\nabla_x\sigma^L_{ij}|^2\dx x \dx s.
\end{align}
By applying an additional mollification in the time variable, we readily deduce that
\begin{align}
	\intOT & \sigma^L : \partial_t\sigma^{L,n}\dx x \dx s + \intO \prt*{\int_D M\hpsi_0T_L(q_i)T_L(q_j)\dx q} : \sigma^{L,n}(0)\dx x - \intO \sigma^L(t) : \sigma^{L,n}(t) \dx x\\
	 &\to \frac12\norm{\sigma^L(t)}^2_{L^2(\Omega;\R^{2\times 2})} - \frac12\norm{\sigma(\hpsi_0)}^2_{L^2(\Omega;\R^{2\times 2})}.
\end{align}
Collecting all the pieces, we arrive at the following uniform bound on $\sigma^L$:
\begin{equation}
	\frac12\norm{\sigma^L(t)}^2_{L^2(\Omega;\R^{2\times 2})} + \mu \int_0^t \intO |\nabla_x\sigma^L(s)|^2\dx x\dx s \leq C + \frac12\norm{\sigma(\hpsi_0)}^2_{L^2(\Omega;\R^{2\times 2})}.
\end{equation}
It follows that, up to a subsequence, we have the weak convergence
\begin{equation}
	\nabla_x\sigma^L \rightharpoonup \nabla_x\sigma(\hpsi) \quad \text{weakly in $L^2(0,T;L^2(\Omega;\R^{2\times 2}))$}.
\end{equation}

We are now ready to pass to the limit $L\to\infty$ in the equation for $\sigma^L$. 
Using the properties of $T_L$ and its derivatives, we obtain
\begin{align}
	- \int_0^t\intOD M\partial_{q_k}\hpsi^m T_L'(q_k)T_L(q_j)\varphi_{kj} \dx q \dx x \dx s \to &-\intOT \sigma(\hpsi^m) : \varphi \dx x \dx s\\
	 &+ \intOT \underbrace{\prt*{\int_D M\hpsi^m\dx{q}}}_{=1}\Id:\varphi \dx x \dx s;
\end{align}
and
\begin{align}
\int_0^t\intOD & M\hpsi^m (\nabla_xu^m)_{kl}q_lT_L'(q_k)T_L(q_j)\varphi_{kj}\dx q\dx x\dx s\\
 &+ \int_0^t\intOD M\hpsi^m (\nabla_xu^m)_{kl}q_lT_L'(q_k)T_L(q_i)\varphi_{ik}\dx q\dx x\dx s\\
 	  \to& \intOT \nabla_xu^m\sigma(\hpsi^m) : \varphi\dx x \dx s + \int_0^t\intO \sigma(\hpsi^m)(\nabla_xu^m)^T : \varphi \dx x \dx s,
\end{align}
where the convergence follows from the Dominated Convergence Theorem, by noting that $\sigma(\hpsi^m)\inb L^\infty(0,T;L^2(\Omega;\R^{2\times 2}))$ and $\nabla_xu^m\inb L^2(0,T;L^2(\Omega;\R^{2\times 2}))$.
For the transport term we write
\begin{align}
	\int_0^t\intO\sigma^L_{ij} u^m\cdot \nabla_x\varphi_{ij} \dx q\dx x\dx s
	&\to \int_0^t\intO \prt{u^m\cdot\nabla_x}\varphi : \sigma(\hpsi^m) \dx x \dx s\\ &= -\int_0^t\intO \prt{u^m\cdot\nabla_x}\sigma(\hpsi^m) : \varphi \dx x \dx s,
\end{align}
where the last equality follows since $u^m$ vanishes on the boundary of $\Omega$. 

We thus arrive at the following equation satisfied by $\sigma(\hpsi^m)$: 
\begin{equation}
\label{eq:SigmaMEquation}
\begin{aligned}
	-\int_0^t\intO \sigma(\hpsi^m) : \partial_t\varphi \dx{x}\dx{s} =& \intO \sigma(\hpsi_0) : \varphi(0) \dx{x} - \intO \sigma(\hpsi^m)(t) : \varphi(t) \dx{x}\\
	&-\int_0^t\intO\prt*{u^m\cdot\nabla_x}\sigma(\hpsi^m) : \varphi\dx{x}\dx{s} - \mu\int_0^t\intO\nabla_x\sigma(\hpsi^m) :: \nabla_x\varphi \dx{x}\dx{s}\\
	& +\int_0^t\intO\sigma(\hpsi^m)\prt*{\nabla_xu^m}^{\mathrm{T}} : \varphi \dx{x}\dx{s} + \int_0^t\intO(\nabla_x u^m) \sigma(\hpsi^m) : \varphi \dx{x}\dx{s}\\
	& - 2\int_0^t\intO\prt*{\sigma(\hpsi^m)-\Id} : \varphi\dx{x}\dx{s}\\
	&\forall t\in[0,T],\;\;\forall\, \varphi\in W^{1,1}(0,T;C^1(\overline{\Omega};\R^{2\times 2})).
\end{aligned}
\end{equation}
Now, using density of the space $C^{1}(\overline{\Omega};\R^{2\times 2})$ in $H^1(\Omega;\R^{2\times 2})$ and passing to the limit $m\to\infty$, we deduce that the pair $(u, \bar{\sigma})$ is a weak solution of the Oldroyd-B system. We postpone the proof of uniqueness to the next section.
Finally, passing to the limit in~\eqref{eq:ConformationEnergy2DM} we obtain the energy inequality
\begin{equation}
\label{eq:ConformationEnergy2DFinal}
\begin{split}
	\frac12\intO |u(t)|^2\dx{x} &+ \frac12 \intO \tr{\bar{\sigma}}(t) \dx{x} + \nu\intOT |\nabla_xu|^2\dx{x}\dx{s} + \intOT \tr{(\bar{\sigma}-\Id)}\dx{x}\dx{s}\\
	 &\leq \frac12\intO|u(0)|^2\dx{x} + \frac12\intO \tr{\sigma(\hpsi_0)}\dx{x}.
\end{split}
\end{equation}
\qed

\begin{corollary}
\label{thm:Integrability2D}
	If $\sigma(\hpsi)$ belongs to $L^\infty(0,T;L^p(\Omega;\R^{2\times 2}))$ for some $p>2$, then, for the generalised solution as constructed above, we have $\sigma(\hpsi) = \bar\sigma$, yielding a weak solution to the 2D NSFP system.
\end{corollary}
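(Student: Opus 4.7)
The plan is to show that $(u, \sigma(\hpsi))$ is itself a weak solution of the two-dimensional Oldroyd-B system in the sense of Definition~\ref{def:OldBweak}, with initial datum $(u_0, \sigma(\hpsi_0))$. Since $(u, \bar\sigma)$ is also a weak solution of Oldroyd-B with the same data (by Theorem~\ref{thm:Existence}), the uniqueness part of that theorem would then force $\sigma(\hpsi) = \bar\sigma$, whence $m_{NS} \equiv 0$, $m_{OB} = \tr m_{NS} \equiv 0$, and $(u, \hpsi)$ becomes a weak solution of the NSFP system in the sense of Definition~\ref{def:NSFPweak}.

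To derive the Oldroyd-B stress equation for $\sigma(\hpsi)$, the strategy is to mimic the passage leading to \eqref{eq:SigmaMEquation}, but now starting from the limit Fokker--Planck equation~\eqref{eq:FPfinal} satisfied by $\hpsi$. Specifically, test \eqref{eq:FPfinal} with
\begin{equation*}
\phi(t,x,q) = T_L(q_i)T_L(q_j)\,\eta_{R'}(|q|^2)\,\varphi_{ij}(t,x),
\end{equation*}
where $\eta_{R'}$ is a smooth radial cut-off ensuring $\phi \in W^{1,1}(0,T;C_c^\infty(\overline\Omega\times D))$, and remove $\eta_{R'}$ by passing $R' \to \infty$, which is legitimate thanks to the second-moment bound \eqref{eq:FYbound}. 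This produces an evolution equation for the truncated conformation tensor $\sigma^L(t,x) := \int_D M(q)\hpsi(t,x,q)\,T_L(q_i)T_L(q_j)\dx q$. A Gronwall argument powered by the two-dimensional Gagliardo--Nirenberg inequality $\|\sigma^L\|_{L^4}^2 \lesssim \|\sigma^L\|_{L^2}\|\sigma^L\|_{H^1}$, parallel to the derivation of \eqref{eq:SigmaL2}, yields uniform-in-$L$ bounds $\sigma^L \inb L^\infty(0,T;L^2(\Omega;\R^{2\times 2})) \cap L^2(0,T;H^1(\Omega;\R^{2\times 2}))$ together with a dual bound on $\partial_t\sigma^L$. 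Since $T_L(q_i)T_L(q_j) \to q_iq_j$ pointwise and $|T_L(q_i)T_L(q_j)| \leq |q|^2$, dominated convergence yields $\sigma^L \to \sigma(\hpsi)$ almost everywhere on $(0,T)\times\Omega$, and Aubin--Lions upgrades this to strong convergence in $L^2(\Omega_T;\R^{2\times 2})$. Passing $L \to \infty$ in each term of the $\sigma^L$-equation then gives~\eqref{eq:OBSigma} for $\sigma(\hpsi)$, at first tested against smooth $\varphi \in C^1([0,T]\times\overline\Omega;\R^{2\times 2})$.

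The hypothesis $\sigma(\hpsi) \in L^\infty(0,T;L^p(\Omega;\R^{2\times 2}))$ with $p>2$ is invoked precisely at the final step, namely enlarging the class of admissible test functions from smooth $\varphi$ to $\varphi \in W^{1,1}(0,T;H^1(\Omega;\R^{2\times 2}))$, as demanded by Definition~\ref{def:OldBweak}. The critical term is the stretching contribution $\int_\Omega (\nabla_x u)\sigma(\hpsi):\varphi\dx x$: with $\nabla_x u \in L^2(\Omega;\R^{2\times 2})$ and $\sigma(\hpsi) \in L^p(\Omega;\R^{2\times 2})$, H\"older places the product $(\nabla_x u)\sigma(\hpsi)$ in $L^{2p/(p+2)}(\Omega;\R^{2\times 2})$, whose dual exponent $r = 2p/(p-2)$ is \emph{finite} precisely when $p > 2$; the two-dimensional Sobolev embedding $H^1 \hookrightarrow L^r$ then makes the pairing against $\varphi \in H^1$ legitimate, and a standard density argument transfers the equation from $C^1$-test functions to $H^1$-test functions. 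This is the chief role of the hypothesis $p>2$, and it is the main obstacle of the proof: without it, the $H^1$-version of the weak Oldroyd-B formulation cannot be recovered from the $C^1$-version, and the direct application of uniqueness to identify $\sigma(\hpsi)$ with $\bar\sigma$ breaks down. Once the weak Oldroyd-B formulation is in place for $(u, \sigma(\hpsi))$, invoking uniqueness from Theorem~\ref{thm:Existence} concludes the proof.
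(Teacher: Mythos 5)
Your overall plan---derive the Oldroyd-B stress equation for $\sigma(\hpsi)$ by testing the limiting Fokker--Planck equation with $T_L(q_i)T_L(q_j)\varphi_{ij}$, pass $L\to\infty$, and then invoke uniqueness of weak solutions to identify $\sigma(\hpsi)$ with $\bar\sigma$---matches the paper's strategy, and the final uniqueness step is essentially what the paper does (the paper tests the difference of the two Oldroyd-B equations against $\bar\sigma-\sigma(\hpsi)$, which amounts to the same thing). However, you misidentify \emph{where} the hypothesis $\sigma(\hpsi)\in L^\infty(0,T;L^p)$, $p>2$, is essential, and this points to a genuine gap in the intermediate step.

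You claim that a ``Gronwall argument powered by the two-dimensional Gagliardo--Nirenberg inequality\ldots parallel to the derivation of \eqref{eq:SigmaL2}, yields uniform-in-$L$ bounds $\sigma^L \inb L^\infty(0,T;L^2)\cap L^2(0,T;H^1)$'' \emph{before} invoking the $L^p$ hypothesis. This does not close. In the derivation of \eqref{eq:SigmaL2} the stretching term is dominated by $\int|\nabla_x u^{m,R,L}|\,|\sigma^{m,R,L}|^2\dx x$ precisely because the $q$-convective term carries the truncation $\Lambda_L(\hpsi^{m,R,L})\leq\hpsi^{m,R,L}$ and the extra cut-off $\chi_R$, so the quantity appearing there is controlled by the \emph{same} object $\sigma^{m,R,L}$ being estimated, and GN/absorption closes. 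Here the situation is different: testing \eqref{eq:FPfinal} with $T_L(q_i)T_L(q_j)\varphi_{ij}$, the stretching contribution carries a factor $q_l T_L'(q_i)T_L(q_j)$, and the only available pointwise bound is $|q_l T_L'(q_i)T_L(q_j)|\lesssim|q|^2$ (you cannot replace the loose index $q_l$ by a truncated quantity). Hence the stretching term is bounded by $\int|\nabla_x u|\,|\varphi|\,\tr\sigma(\hpsi)\dx x$ with the \emph{untruncated} $\tr\sigma(\hpsi)$, which is a priori only in $L^\infty(0,T;L^1)$ and is \emph{not} controlled by $\sigma^L$. Moreover, unlike the Galerkin level, you no longer have the uniform bound $\nabla_xu\in L^\infty(0,T;L^\infty)$ to compensate. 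This is exactly why the paper invokes the hypothesis at \emph{this} point: the H\"older estimate
\[
\int_0^t\intO |\nabla_x u|\,|\varphi|\,\tr\sigma(\hpsi)\,\dx x\,\dx s \;\lesssim\; \norm{\nabla_x u}_{L^2L^2}\,\norm{\sigma(\hpsi)}_{L^\infty L^p}\,\norm{\varphi}_{L^2L^{\frac{2p}{p-2}}}\;\lesssim\;\norm{\varphi}_{L^2H^1},
\]
using the Sobolev embedding $H^1(\Omega)\hookrightarrow L^{2p/(p-2)}(\Omega)$ in two dimensions, is what allows the absorption and makes the energy estimate close; the same is needed for the transport term, bounding $|\sigma^L|\leq\tr\sigma(\hpsi)\in L^\infty L^p$.

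Conversely, the step you single out as critical---enlarging the test function class from $C^1$ to $H^1$---does \emph{not} actually require the $L^p$ hypothesis once the uniform-in-$L$ bound $\sigma^L\inb L^\infty L^2\cap L^2H^1$ has been secured: from that bound one already has $\sigma(\hpsi)\in L^\infty L^2\cap L^2H^1\hookrightarrow L^4(0,T;L^4)$ by Gagliardo--Nirenberg in 2D, which suffices, by H\"older together with $H^1\hookrightarrow L^q$ for all $q<\infty$, to make every term in the weak Oldroyd-B formulation (in particular the stretching term) well-defined against $\varphi\in W^{1,1}(0,T;H^1)$; a standard density argument then upgrades the test class. In short: the hypothesis $p>2$ is essential to establish the $L^2(0,T;H^1)$ regularity of $\sigma(\hpsi)$, not to extend the test function class afterwards. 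Your proof as written leaves the uniform-in-$L$ bound unjustified and is therefore incomplete, even though the skeleton of the argument is correct.
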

\begin{proof}
		With the additional integrability we can repeat the argument from the last part of the previous proof to derive an equation for $\sigma^L$, this time defining its entries via
		\begin{equation}
			\sigma^L_{ij}  :=  \int_D M\hpsi\, T_L(q_i)\,T_L(q_j) \dx q, \quad i,j=1,2.
		\end{equation}
	We arrive at an identical equation with $\hpsi^m$ replaced by $\hpsi$ and $u^m$ replaced by $u$. In deriving the energy estimate for $\sigma^L$ we now cannot use the $W^{1,\infty}$ bound on the velocity. Instead, we can estimate the terms involving the velocity as follows:
\begin{align}
	\int_0^t\intOD & M\hpsi (\nabla_xu)_{kl}q_lT_L'(q_k)T_L(q_j)\varphi_{kj}\dx q\dx x\dx s \\
	&\lesssim \intOT |\nabla_x u||\varphi|\tr\sigma(\hpsi) \dx x\dx s\\
	&\lesssim \norm{\nabla_xu}_{L^2(0,T;L^2(\Omega;\R^{2\times 2}))}\norm{\sigma(\hpsi)}_{L^\infty(0,T;L^p(\Omega;\R^{2\times 2}))}\underbrace{\norm{\varphi}_{L^2(0,T;L^{\frac{2p}{p-2}}(\Omega;\R^{2\times 2}))}}_{\leq C\norm{\varphi}_{L^2(0,T;H^1(\Omega;\R^{2\times 2}))}},
\end{align}	
	and
	\begin{align}
	 \intOT \sigma^L_{ij} u^m &\cdot \nabla_x\sigma^{L,n}_{ij}\dx x\dx s = \intOT \sigma^L_{ij} u\cdot \nabla_x\sigma^L_{ij}\dx x\dx s + \intOT \sigma^L_{ij} u\cdot \nabla_x(\sigma^{L,n}_{ij}-\sigma^L_{ij})\dx x\dx s\\
	 &=\intOT \sigma^L_{ij} u\cdot \nabla_x(\sigma^{L,n}_{ij}-\sigma^L_{ij})\dx x\dx s\\
	 &\leq \norm{u}_{L^2(0,T;L^{\frac{2p}{p-2}}(\Omega;\R^2))}\norm{\sigma(\hpsi)}_{L^\infty(0,T;L^p(\Omega;\R^{2\times 2}))}\norm{\nabla_x(\sigma^{L,n}_{ij}-\sigma^L_{ij})}_{L^2(0,T;L^2(\Omega;\R^{2}))}.
\end{align}
	By passing to the limit $L\to\infty$, it follows that $\sigma(\hpsi	)$ belongs to $L^2(0,T;H^1(\Omega;\R^{2\times 2}))$ and satisfies the Oldroyd-B stress equation~\eqref{eq:OBSigma}, and so does $\bar\sigma$, with the same velocity $u$. Taking the difference of the two formulations and testing by the difference $\bar\sigma - \sigma(\hpsi)$\footnote{Strictly speaking, a mollification in time is required for this testing to be admissible.} we easily deduce that $\bar\sigma = \sigma(\hpsi)$ a.e. (similarly as in the more general calculation below).
\end{proof}

\begin{remark}
The above corollary is stated in terms of conditional integrability of $\sigma(\hpsi)$. Let us point out however that using the pointwise bound~\eqref{eq:SigmaPointwise}, we obtain the same integrability of $\sigma(\hpsi)$ as that of $\bar\sigma$. 
Since the latter satisfies equation~\eqref{eq:OBSigma}, Corollary~\ref{thm:Integrability2D} can be seen as a sufficient condition for existence of weak solutions to the 2D NSFP system in terms of regularity of the solution to the Oldroyd-B system. We leave open the question of minimal conditions for $\bar\sigma$ to belong to $L^\infty L^p$. Observe that a slight modification of the above proof allows for the same conclusion under additional regularity of the velocity gradient -- namely, $\nabla_x u \in L^2(0,T;L^p(\Omega;\R^{2\times 2}))$ for some $p>2$.
\end{remark}

\section{Conditional regularity and uniqueness}
\label{sec:Conditional}

\subsection{Proof of Theorem~\ref{thm:Regularity3D}}

Let us now assume additional regularity of the velocity field, namely that 
\begin{equation}
\label{eq:AdditionalVelocityRegularity}
	\partial_t u \in L^1(0,T;L^2(\Omega;\mathbb{R}^3))\quad \mbox{and}\quad \nabla_x u\in L^1(0,T;C(\bar\Omega;\R^{3 \times 3})).
\end{equation}
Testing the Fokker--Planck equation~\eqref{eq:GeneralisedFP3D} with the function $\phi(q) = T_R(|q|^2)$ (with the same truncation as before, defined in~\eqref{eq:truncationT}), we obtain
\begin{align}
	\intOD & M\hpsi_0 T_R(|q|^2)\dx{q}\dx{x} - \intOD M\hpsi(t) T_R(|q|^2)\dx{q}\dx{x} \\ 
	&= - \intOT\int_D M\hpsi \prt{(\nabla_x u) q} \cdot \nabla_q T_R(|q|^2)\dx{q}\dx{x}\dx{s}
	+ \intOT\int_D M\nabla_q\hpsi\cdot\nabla_qT_R(|q|^2)\dx{q}\dx{x}\dx{s}.
\end{align}
The first term on the right-hand side can be rewritten as
\begin{equation}
	- \intOT\int_D M\hpsi \prt{(\nabla_x u) q} \cdot \nabla_q T_R(|q|^2)\dx{q}\dx{x}\dx{s} = -2\intOT \prt*{\int_D M\hpsi \chi_{R}(|q|^2)\,(q\otimes q)\dx{q}} : \nabla_x u\dx{x}\dx{s},
\end{equation}
while the other term can be written as
\begin{equation}
\begin{split}
	2\intOT\int_D M\nabla_q\hpsi\cdot q \chi_{R}(|q|^2)\dx{q}\dx{x}\dx{s} & = 2\intOT\int_D M\hpsi |q|^2 \chi_{R}(|q|^2) \dx{q}\dx{x}\dx{s}\\
	& -2d\intOT\int_D M\hpsi\chi_{R}(|q|^2)\dx{q}\dx{x}\dx{s}\\
	& -2\intOT\int_D M\hpsi q\cdot \nabla_q\chi_{R}(|q|^2) \dx{q}\dx{x}\dx{s}.
\end{split}
\end{equation}
As before, the last term can be bounded as follows:
\begin{equation}
	\intOT\int_D M\hpsi q\cdot \nabla_q\chi_{R}(|q|^2) \dx{q}\dx{x}\dx{s} = 2\intOT\int_D M\hpsi |q|^2\,\chi_{R}'(|q|^2) \dx{q}\dx{x}\dx{s} \lesssim \frac{1}{R}.
\end{equation}
In each of the other terms we can pass to the limit $R\to\infty$ using the Monotone Convergence Theorem. Therefore we deduce that
\begin{equation}
	\intO \tr \sigma(\hpsi_0) \dx{x} - \intO  \tr \sigma(\hpsi)(t) \dx{x} = -2\intOT \sigma(\hpsi) : \nabla_x u \dx{x}\dx{s} + 2 \intOT \tr(\sigma(\hpsi) - \Id) \dx{x}\dx{s}.
\end{equation}
Next, we test the Navier--Stokes equation~\eqref{eq:GeneralisedNS3D} with $u$ (which is admissible because of the assumed additional regularity~\eqref{eq:AdditionalVelocityRegularity}) to derive the equality
\begin{equation}
\begin{split}
	\frac12\intO |u(t)|^2\dx{x} &- \frac12\intO |u_0|^2\dx{x} + \nu \intOT |\nabla_x u|^2\dx{x}\dx{s} \\
	&= -\int_0^t \sigma(\hpsi) : \nabla_x u\dx{s} - \intOT \skp*{m_{NS}(s),\nabla_x u}\dx{s}. 
\end{split}
\end{equation}
Consequently, we obtain the energy \emph{equality}
	\begin{equation}
	\begin{aligned}
	\label{eq:OBEnergyEquality}
		\frac12\intO |u(t)|^2\dx{x} &+ \frac12\intO \tr{\sigma(\hpsi)}(t)\dx{x} + \nu\intOT |\nabla_x u|^2\dx{x}\dx{s} + \intOT\tr{\prt*{\sigma(\hpsi)-\Id}}\dx{x}\dx{s}\\
		&= \frac12\intO |u_0|^2\dx{x} + \frac12\intO \tr{\sigma(\hpsi_0)}\dx{x} - \int_0^t\skp*{m_{NS},\nabla_x u}\dx{s}.
	\end{aligned}
	\end{equation}	
Comparing this with the energy inequality~\eqref{eq:GeneralisedEI3D}, we deduce that
\begin{align}
	\skp*{m_{OB}(t),\mathbbm{1}_{\overline\Omega}} + \int_0^t \skp*{m_{OB}(s),\mathbbm{1}_{\overline\Omega}}\dx{s} \leq \int_0^t\skp*{m_{NS}(s),\nabla_x u}\dx{s}.
\end{align}	
The compatibility condition implies that
\begin{equation}
\left|\int_0^t\skp*{m_{NS}(s),\nabla_x u}\dx{s}\right| \leq \int_0^t \zeta(s)\norm{\nabla_x u(s)}_{C(\overline\Omega;\R^{3\times 3})} \skp*{m_{OB}(s),\mathbbm{1}_{\overline\Omega}}\dx{s}. 
\end{equation}
Consequently, Gronwall's Lemma implies that $m_{OB}\equiv 0$. Then, by compatibility, we must have that $m_{NS}\equiv 0$. \qed

\subsection{Proof of Corollary~\ref{thm:Regularity2D}}

By assuming the stated additional regularity of the initial datum for the stress tensor, together with the Oldroyd-B stress evolution equation satisfied by $\bar\sigma$, we are in a position to deduce higher regularity for $\bar\sigma$, and subsequently of the velocity. 

Indeed, since $u\in L^4(0,T;L^4(\Omega;\R^2))$ and $\bar\sigma\in L^2(0,T;H^1(\Omega;\R^{2\times 2}))$, we see that 
\begin{equation}
    \prt*{u\cdot\nabla_x}\bar\sigma \in L^{4/3}(0,T;L^{4/3}(\Omega;\R^{2\times 2})).
\end{equation} Similarly, $(\nabla_x u)\bar\sigma\in L^{4/3}(0,T;L^{4/3}(\Omega;\R^{2\times 2}))$. 
It follows from Lemma~3.2 in~\cite{BaSu18} that
\begin{equation}
    \bar\sigma \in L^{4/3}(0,T;W^{2,4/3}(\Omega;\R^{2\times 2})),\quad \div\bar\sigma \in L^{4/3}(0,T;L^4(\Omega;\R^{2\times 2})).
\end{equation}
With this regularity in hand, we can apply Lemma~3.6 in~\cite{BaSu18} to deduce that
\begin{equation}
    \partial_t u \in L^2(0,T;L^2(\Omega;\R^2)),\quad u \in L^{4/3}(0,T;W^{2,q}(\Omega;\R^2)),\quad q\in[1,4).
\end{equation}
We can therefore repeat the argument from the proof of Theorem~\ref{thm:Regularity3D} above to conclude that $m_{NS}$ and $m_{OB}$ vanish and therefore $\bar\sigma = \sigma(\hpsi)$. \qed

\subsection{Proof of uniqueness for the Oldroyd-B system in 2D}

Suppose that $(u^1, \bar\sigma^1)$ and $(u^2,\bar\sigma^2)$ are two weak solutions of the Oldroyd-B system with the same initial data $(u_0, \bar\sigma_0)$.
Using the energy identity~\eqref{eq:OBEnergyEquality} for both solutions we obtain the following equality:
\begin{align}
	&\frac12\intO |u^1(t)-u^2(t)|^2 \dx x + \nu \int_0^t\intO |\nabla_x u^1 - \nabla_x u^2|^2 \dx x\dx s\\ 
	&=
	 \frac12 \intO |u^1(t)|^2 \dx x + \nu \int_0^t\intO |\nabla_x u^1|^2 \dx x \dx s + \frac12\intO |u^2(t)|^2 \dx x + \nu \int_0^t\intO |\nabla_x u^2|^2 \dx x \dx s\\
	 &\quad  - \intO u^1(t) \cdot u^2(t) \dx x - 2\nu \int_0^t\intO \nabla_x u^1 : \nabla_x u^2 \dx x \dx s \\
	&= \frac12 \intO|u_0|^2 \dx x + \frac{1}{2} \intO \tr{\bar\sigma_0} \dx x - \int_0^t\intO \tr{(\bar\sigma^1-\Id)} \dx x \dx s - \frac{1}{2} \intO \tr{\bar\sigma^1(t)}\dx x \\
	&\quad + \frac12\intO |u_0|^2\dx x + \frac{1}{2}\intO \tr{\bar\sigma_0} - \int_0^t\intO \tr{(\bar\sigma^2-\Id)}\dx x\dx s - \frac{1}{2} \intO \tr{\bar\sigma^2(t)}\dx x\\
	&\quad - \intO u^1(t) \cdot u^2(t) \dx x - 2\nu \int_0^t\intO \nabla_x u^1 : \nabla_x u^2\dx x\dx s.
\end{align}
The last two terms are treated as is usual for the Navier--Stokes equations, see for instance~\cite{Wiedemann_survey}, to deduce that
\begin{align}
	\frac12\intO|u_0|^2 \dx x &+ \frac12 \intO|u_0|^2 \dx x - \intO u^1(t) \cdot u^2(t) \dx x - 2\nu \int_0^t\intO \nabla_x u^1 : \nabla_x u^2\dx x\dx s \\
	&= \int_0^t\intO \nabla_x(u^1-u^2) : \prt*{(u^1-u^2)\otimes u^2} \dx x \dx s.
\end{align}
The terms involving the tensors $\bar\sigma^i$, $i=1,2$, can be rewritten using their evolution equations: testing the equation~\eqref{eq:OBSigma} by $\frac{1}{2}\Id$, we obtain, for $i=1,2$,
\begin{equation}
	-\frac{1}{2} \intO \prt*{\bar\sigma^i(t)-\bar\sigma_0}\dx x  - \int_0^t\intO\tr{(\bar\sigma^i-\Id)}\dx x \dx s = 2\int_0^t\intO \bar\sigma^i : \nabla_x u^i \dx x\dx s.
\end{equation}
Therefore, we have
\begin{equation}
\begin{aligned}
\label{eq:RelativeEnergy1}
	\frac12&\intO |u^1(t)-u^2(t)|^2 \dx x + \nu \int_0^t\intO |\nabla_x u^1 - \nabla_x u^2|^2 \dx x \dx s\\
	 & = \int_0^t\intO \nabla_x(u^1-u^2) : \prt*{(u^1-u^2)\otimes u^2} \dx x \dx s + 2\int_0^t\intO \prt*{\bar\sigma^1-\bar\sigma^2} : \nabla_x(u^1-u^2) \dx x \dx s .
\end{aligned}
\end{equation}
The first term on the right-hand side can be bounded as follows, using the Gagliardo--Nirenberg and Young inequalities:
\begin{align}
	\int_0^t & \intO \nabla_x(u^1-u^2) : \prt*{(u^1-u^2)\otimes u^2} \dx x \dx s\\
	 &\leq \int_0^t \norm{\nabla_x(u^1-u^2)}_{L^2(\Omega;\R^{d\times d})}\norm*{u^2}_{L^4(\Omega;\R^d)}\norm*{u^1-u^2}_{L^4(\Omega;\R^d)} \dx s\\
	 &\leq \frac{\delta}{2}\int_0^t\intO |\nabla_x u^1 - \nabla_x u^2|^2 \dx x \dx s + \frac{1}{2\delta}\int_0^t \norm*{u^2}^2_{L^4(\Omega;\R^d)}\norm*{u^1-u^2}^2_{L^4(\Omega;\R^d)} \dx s\\
	 & \leq \delta\int_0^t\intO |\nabla_x u^1 - \nabla_x u^2|^2 \dx x \dx s + \frac{C}{\delta^3}\int_0^t \norm*{u^2}^4_{L^4(\Omega;\R^d)}\intO|u^1-u^2|^2\dx x \dx s, 
\end{align}
where $C$ is a positive constant, independent of $\delta$. For the other term on the right-hand side of~\eqref{eq:RelativeEnergy1} we write, using Young's inequality,
\begin{equation}
\begin{aligned}
	\int_0^t & \intO \prt*{\bar\sigma^1-\bar\sigma^2} : \nabla_x(u^1-u^2)\dx x\dx s\leq \delta \int_0^t\intO |\nabla_x(u^1-u^2)|^2\dx x \dx s + \frac{1}{4\delta} \int_0^t\intO \abs*{\bar\sigma^1-\bar\sigma^2}^2 \dx x \dx s,
\end{aligned}
\end{equation}
and to close the estimate, we require a bound on the square of the difference of $\bar\sigma^1$ and $\bar\sigma^2$.
To this end, we subtract the corresponding equations and test by the difference, which yields
\begin{equation}
\begin{aligned}
\label{eq:RelativeEnergy2}
	&\frac12\intO \abs*{\bar\sigma^1(t)-\bar\sigma^2(t)}^2 \dx x + \mu \int_0^t\intO \abs*{\nabla_x(\bar\sigma^1-\bar\sigma^2)}^2 \dx x\dx s + 2 \int_0^t\intO \abs*{\bar\sigma^1-\bar\sigma^2}^2\dx x \dx s \\
	&= - \int_0^t\intO \prt*{(u^1-u^2)\cdot\nabla_x}(\bar\sigma^1-\bar\sigma^2) : \bar\sigma^2\dx x \dx s \\
	&\quad+ \int_0^t\intO (\nabla_x u^1 \bar\sigma^1+\bar\sigma^1\nabla_x^T u^1) : \bar\sigma^2 + (\nabla_x u^2 \bar\sigma^2+\bar\sigma^2\nabla_x^T u^2) : \bar\sigma^1 \dx x \dx s \\
	&\;\;\; - \int_0^t\intO (\nabla_x u^1 \bar\sigma^1+\bar\sigma^1\nabla_x^T u^1) : \bar\sigma^1 + (\nabla_x u^2 \bar\sigma^2+\bar\sigma^2\nabla_x^T u^2) : \bar\sigma^2 \dx x \dx s.
\end{aligned}
\end{equation}
The first term on the right-hand side can be treated similarly as before, producing
\begin{align}
	\int_0^t & \intO \prt*{(u^1-u^2)\cdot\nabla_x}(\bar\sigma^1-\bar\sigma^2) : \bar\sigma^2\dx x \dx s\\
	 & \leq \delta\int_0^t\intO \abs*{\nabla_x(\bar\sigma^1-\bar\sigma^2)}^2 \dx x \dx s + \delta\int_0^t\intO |\nabla_x u^1 - \nabla_x u^2|^2 \dx x \dx s\\
	 &\quad + \frac{C}{\delta^3}\int_0^t \norm{\bar\sigma^2}^4_{L^4(\Omega;\R^{2\times 2})}\intO|u^1-u^2|^2\dx x \dx s,
\end{align}
where, again, $C$ is a positive constant independent of $\delta$; while the last two terms can be rewritten, using symmetry, as
\begin{align}
	\int_0^t\intO &\nabla_x(u^1-u^2)\bar\sigma^1 : (\bar\sigma^1 -\bar\sigma^2) \dx x \dx s- \int_0^t\intO \nabla_x u^2 (\bar\sigma^1-\bar\sigma^2) : (\bar\sigma^1 - \bar\sigma^2)\dx x \dx s.
\end{align}
The first of the above terms is bounded again using the Gagliardo--Nirenberg inequality and the $L^4(0,T;L^4(\Omega;\R^{2\times 2}))$ regularity of the conformation tensor, while the second term can be 
bounded by integration by parts and the $L^4(0,T;L^4(\Omega;\R^2))$ norm boundedness of $u^2$.
	
Combining~\eqref{eq:RelativeEnergy1} and~\eqref{eq:RelativeEnergy2} together with the above estimates, we obtain, upon choosing $\delta>0$ small enough (in terms of $\nu$ and $\mu$),
\begin{align}
	\intO |u^1-u^2|^2 \dx x &+ \intO \abs*{\bar\sigma^1-\bar\sigma^2}^2 \dx x
	\leq \int_0^t a(t) \intO|u^1-u^2|^2 \dx x \dx s + \int_0^t b(t) \intO\abs*{\bar\sigma^1-\bar\sigma^2}^2 \dx x \dx s, 
\end{align}
for some functions $a,b\in L^1(0,T)$. We conclude by Gronwall's Lemma that $u^1 = u^2$ and $\bar\sigma^1=\bar\sigma^2$ a.e.\ in $(0,T)\times\Omega$. We thus obtain uniqueness for the Oldroyd-B equation. \qed

\section*{{\it{Acknowledgements}}}
This project was initiated while T.D.\ was visiting the Mathematical Institute at the University of Oxford, whose kind hospitality he gratefully acknowledges. T.D.\ was partially supported by the National Science Center (Poland), grant number 2017/27/B/ST1/01569, and the ``New Ideas'' Programme, University of Warsaw, Excellence Initiative Research University.

\end{document}